\theoremstyle{plain}
\newtheorem{theorem}{Theorem}[section]
\newtheorem{corollary}[theorem]{Corollary}
\newtheorem{definition}[theorem]{Definition}
\newtheorem{lemma}[theorem]{Lemma}
\newtheorem{proposition}[theorem]{Proposition}
\newtheorem{remark}[theorem]{Remark}
\theoremstyle{definition}
\newtheorem{assumption}[theorem]{Assumption}
\numberwithin{equation}{section}
\newcommand{\bb}[1]{\mathbb{#1}}
\newcommand{\bbr}{\bb{R}}
\newcommand{\bbi}{\bb{I}}
\newcommand{\bbn}{\bb{N}}
\newcommand{\bbs}{\bb{S}}
\newcommand{\bbt}{\bb{T}}
\newcommand{\bbd}{\bb{D}}
\newcommand{\bbone}{\mathbbm{1}}
\newcommand{\bbtwo}{\mathbbm{2}}
\newcommand{\bu}{\mathbf{u}}
\newcommand{\bv}{\mathbf{v}}
\newcommand{\bn}{\mathbf{n}}
\newcommand{\btau}{\bm{\tau}}
\newcommand{\be}{\mathbf{e}}
\newcommand{\bX}{{X}}
\newcommand{\bU}{\mathbf{U}}
\newcommand{\bF}{\mathbf{F}}
\newcommand{\cA}{\mathcal{A}}
\newcommand{\cB}{\mathcal{B}}
\newcommand{\cD}{\mathcal{D}}
\newcommand{\cE}{\mathcal{E}}
\newcommand{\cF}{\mathcal{F}^w}
\newcommand{\cR}{\mathcal{R}}
\newcommand{\cN}{\mathcal{N}}
\newcommand{\cH}{\mathcal{H}}
\newcommand{\bphi}{\bm{\varphi}}
\newcommand{\bPhi}{\mathbf{\Phi}}
\newcommand{\bPsi}{\mathbf{\Psi}}
\newcommand{\fp}{\mathfrak{p}}
\newcommand{\vr}{\varrho}
\newcommand{\vphi}{\varphi}
\renewcommand{\Bar}[1]{\overline{#1}}
\newcommand{\ess}[1]{\left[#1\right]_{\mathrm{ess}}}
\newcommand{\res}[1]{\left[#1\right]_{\mathrm{res}}}
\newcommand{\tin}{\quad \text{in }}
\newcommand{\ton}{\quad \text{on }}
\renewcommand{\d}{\mathrm{d}}
\newcommand{\dx}{\,\d x}
\newcommand{\dt}{\,\d t}
\newcommand{\ddt}{\frac{\d}{\d t}}
\newcommand{\dxdt}{\,\d x \d t}
\newcommand{\dtau}{\,\d \tau}
\newcommand{\dxdtau}{\,\d x \d \tau}
\newcommand{\dH}{\,\d \cH}
\newcommand{\ptial}[1]{ \partial_{#1} }
\newcommand{\pt}{\ptial{t}}
\newcommand{\onehalf}{\frac{1}{2}}
\newcommand{\onequater}{\frac{1}{4}}
\newcommand{\tr}{\mathrm{tr}}
\newcommand{\rv}[1]{\left. #1 \right\vert}
\newcommand{\rvm}[1]{#1 \vert}
\newcommand{\tran}[1]{ #1^{\top}}
\newcommand{\inv}[1]{ #1^{-1}}
\newcommand{\invtr}[1]{ #1^{-\top}}
\newcommand{\abs}[1]{\left\vert #1 \right \vert}
\newcommand{\absm}[1]{\vert #1 \vert}
\newcommand{\norm}[1]{\left\Vert #1 \right \Vert}
\newcommand{\normm}[1]{\Vert #1 \Vert}
\DeclareMathOperator*{\Div}{\mathrm{div}}
\DeclareMathOperator*{\esssup}{\mathrm{esssup}}
\newcommand{\dist}{\mathrm{dist}}
\begin{document}
	
	\title[Compressible fluid-structure interaction]{Weak solutions and singular limits for a compressible fluid-structure interaction problem with slip boundary conditions
	}

        \author[Y. Liu, S. Mitra, \& \v{S}. Ne\v{c}asov\'a]{
            \small
            Yadong Liu$^{\ast}$, 
            Sourav Mitra$^{\dagger}$, and 
            \v{S}\'arka Ne\v{c}asov\'a$^\ddagger$
            }
        \address{
	   $^\ast$School of Mathematical Sciences, Ministry of Education Key Laboratory of NSLSCS, \\and Key Laboratory of Jiangsu Provincial Universities of FDMTA, \\Nanjing Normal University, Nanjing 210023, P. R. China
        }
        \email{ydliu@njnu.edu.cn}
        \address{
            $^\dagger$Department of Mathematics, 
            Indian Institute of Technology Indore, \\ 
            Simrol, Indore, 453552, Madhya Pradesh, India
        }
        
        \email{souravmitra@iiti.ac.in}
        
        \address{
            $^\ddagger$Institute of Mathematics,
            Czech Academy of Sciences,
            \v{Z}itn\'{a} 25, 115 67 Praha, Czech Republic
            }
        
    \email{matus@math.cas.cz}
 
	
	
	\date{\today}
	
	\subjclass[2020]{
 Primary:    
            76N10; 
 Secondary: 
            74F10, 
            35Q35, 
            35R37, 
            35B25. 
            }
	\keywords{Fluid-structure interaction, Compressible fluid, Elastic plate, Navier-slip boundary, Singular limit}

    \begin{abstract}
		We study a system describing the compressible barotropic fluids interacting with (visco) elastic solid shell/plate. In particular, the elastic structure is part of the moving boundary of the fluid, and the Navier-slip type boundary condition is taken into account. Depending on the reference geometry (flat or not), we show the existence of weak solutions to the coupled system provided the adiabatic exponent satisfies $\gamma > \frac{12}{7}$ without damping and $\gamma > \frac{3}{2}$ with structure damping, utilizing the domain extension and regularization approximation. Moreover, via a modified relative entropy method in time-dependent domains, we give a rigorous justification of the incompressible inviscid limit of the compressible fluid-structure interaction problem with a flat reference geometry, in the regime of low Mach number, high Reynolds number, and well-prepared initial data. As a byproduct, with a fixed Reynolds number, we derive the incompressible limit without extra assumption.
        To the best of our knowledge, this is the first result concerning the singular limit problem for compressible fluids interacting with elastic structures.
    \end{abstract}
	
    \maketitle
	

\section{Introduction}
\label{sec:introduction}

This contribution considers a fluid-structure interaction problem, which arises in many fields, such as aerodynamics, hemodynamics, and engineering. More precisely, the model is a 3D-2D coupling problem, consisting of 3D compressible fluids and a 2D thin elastic structure.
\subsection{Model description}
Let $ T > 0 $. We denote by $ \Omega^w(t) $ the variable-in-time bounded domain in $ \bbr^3 $ with boundary $ \Gamma^w(t) $ {contains some part of the elastic structure depending on $w$ which will be specified later}. Let $ Q_T^w \coloneqq \bigcup_{t \in (0,T)} \Omega^w(t) \times \{t\} $, $ \Gamma_T^w \coloneqq \bigcup_{t \in (0,T)} \Gamma^w(t) \times \{t\} $ and $ \Gamma_T = \Gamma \times (0,T) $ be the corresponding space-time cylinders {with $\Gamma = \bbt^2$.} The motion of the fluid is governed by the compressible Navier--Stokes equation
\begin{subequations}
	\label{eqs:FSI-Model}
	\begin{alignat}{3}
		\pt \vr + \Div (\vr \bu) & = 0, && \tin Q_T^w, \\ 
		\pt (\vr \bu) + \Div (\vr \bu \otimes \bu) + \nabla p(\vr) & = \Div \bbs, && \tin Q_T^w,
	\end{alignat}
where $ (\bu, \vr) : Q_T^w \to \bbr^3 \times \bbr_{\geq 0} $ are the unknown velocity and density, $ p(\vr) = \vr^\gamma $ with $ \gamma > 1 $ denotes the barotropic pressure and 
\begin{equation*}
	\bbs(\nabla \bu) = \mu (\nabla \bu + \nabla \bu^\top) + \lambda \Div \bu \bbi
	= 2 \mu \bigg(\frac{\nabla \bu + \nabla \bu^\top}{2} - \frac{1}{3} \Div \bu \bbi\bigg) + \bigg(\lambda + \frac{2 \mu}{3}\bigg) \Div \bu \bbi,
\end{equation*}
with $ \mu > 0 $, $ \lambda + 2 \mu / 3 \geq 0 $ the shear viscosity and bulk viscosity.

In the following, we would like to differ from two different geometries, since there is a significant difference if we consider the Navier-slip boundary condition. The reason is that 
the geometric quantities, such as mean curvature, come into play as it is varying with the dynamic moving of the elastic boudary. Moreover, compared to the general geometry, one could gain more regularity of the structure displacement in a flat geometric setting (see Proposition \ref{prop:more-spatial-regularity} below). Overall, one can make use of a mapping $\bPhi_w$ depending on the displacement of the structure $w$ to describe the moving domain.

\begin{definition}[{Two geometries, Figure \ref{fig:two-geometries}}]
    Let $\Gamma = \bbt^2$, we say 
    \begin{itemize}
        \item $\Omega \in \bbone$, if $\Omega \subset \bbr^3$, and the boundary $\partial \Omega$ is parametrized by a $C^4$ injective mapping $\Phi: \Gamma \to \bbr^3$, i.e.,
        \begin{equation*}
            \partial \Omega
             = \left\{
                \Phi(x) \in \bbr^3: x \in \Gamma
               \right\},
        \end{equation*}
        \item $\Omega \in \bbtwo$, if $\Omega = \Gamma \times [0,1]$.
    \end{itemize}
\end{definition}

\begin{figure}[ht!]
	\centering
    \begin{tikzpicture}[line width=0.5pt,scale=0.9]

        \definecolor{aqua}{rgb}{0.0, 1.0, 1.0}
        
        \begin{scope}[shift={(-6,1)}]		
    		\draw[blue,fill=aqua!20] plot[smooth cycle, tension=0.7] coordinates{(0.1,0.1) (1,1.5) (1.8,2) (2.8,1.2) (3.6,1) (3.8,-1) (3,-1.35) (1,-2) (0.5,-1)};
    		\draw [dashed] plot[smooth cycle, tension=0.7] coordinates{(0,0) (0.2,1) (1,1.25) (2,2) (3,1.5) (3.75,0.6) (4,-0.3) (3.3,-0.7) (3,-1.75) (2,-2.05) (0.5,-1)};
        \end{scope}

		\begin{scope}[declare function={
			f(\x)=.2*sin(deg(\x*pi/2))+2;
		}]
					
            \fill[aqua!20] plot[domain = 0:4,samples=100](\x, {f(\x)}) -- (4,0) -- (0,0) -- (0,2);
            
		  	\draw[-stealth] (-.5,0) -- (4.5,0) node[right] {};
		  	\draw[-stealth] (0,-.25) -- (0,2.75);
    
		  	\draw[blue,domain = 0:4,samples=100] plot(\x, {f(\x)});

		  	\draw[dashed] (0,2) node[left] {1} -- (4,2);
            
		  	\node[below left] at (0,0) {$ 0 $};

            \node[below] at (4,0) {$ \Gamma \times \{0\} $};
		\end{scope}
    \end{tikzpicture}
    \caption{Sketch of two different geometries: general reference domain (left) and flat reference domain (right) {(the dashed lines denotes the reference domain, while the solid ones are deformed domain)}.}
    \label{fig:two-geometries}
\end{figure}
Let $ w : \Gamma \times (0,T) \to \bbr $ be the transverse displacement of the {elastic structure as part of the moving boundary illustrated above}. Then we characterize $\Gamma^w$ by an injective mapping $\Phi_w$, such that
\begin{equation*}
    \Phi_w(t,x) = \Phi(x) + w(t,x) \bn(\Phi(x)),
\end{equation*}
where 
\begin{equation*}
    \bn(y) = \frac{\ptial{1} \Phi(x) \times \ptial{2} \Phi(x)}{\abs{\ptial{1} \Phi(x) \times \ptial{2} \Phi(x)}}
\end{equation*}
is a smooth unit outer normal to $\partial \Omega$ at $y = \Phi(x)$, $x \in \Gamma$. Namely, for any $t > 0$,
\begin{equation*}
    \Gamma^w(t) = \left\{
        \Phi(x) + w(t,x) \bn(\Phi(x)): x \in \Gamma
    \right\}.
\end{equation*}
It is a well-known fact that from differential geometry (see e.g. {\cite[Theorem 10.19]{Lee2013}}) there exists $a_{\partial \Omega}$, $b_{\partial \Omega}$ such that for $w \in (a_{\partial \Omega}, b_{\partial \Omega})$, $\Phi_w(t,\cdot)$ is a bijective parametrization of the surface $\Gamma^w(t)$. Moreover, it defines a domain $\Omega^w(t)$ as its interior with boundary $\partial \Omega^w(t) = \Gamma^w(t)$. 
On the middle surface $\Gamma^w(t)$, one defines the weighted normal vector as
\begin{equation*}
    \bn^w(t,y) = \ptial{1} \Phi_w(t,x) \times \ptial{2} \Phi_w(t,x),
\end{equation*}
for $ y = \Phi_w(t,x) \in \Gamma^w(t) $ with $ x \in \Gamma $.

If $\Omega \in \bbtwo$, i.e., the reference geometry is flat, we define the middle surface of the structure from $\Gamma \times \{1\}$, as
\begin{equation*}
    \Gamma^w(t) = \left\{(x, x_3) \in \bbr^3: x_3 = w(t,x)+1, \ x \in \Gamma\right\},
\end{equation*}
for all $t > 0$.
Moreover, we denote by $\Gamma_0 \coloneqq \Gamma \times \{0\}$ the rigid bottom, which only appears accordingly with the flat reference domain. Then the domain $\Omega^w(t)$, with $\partial \Omega^w(t) = \Gamma^w(t) \cup \Gamma_0$ for $t > 0$, is of a periodic slab-like geometry.
In this case, $\Phi_w(t,x) = w(t,x) \be_3 + (x,1)^\top$ for $x \in \Gamma$, $\bn = \mathbf{e}_3$, the area Jacobian is $\sqrt{1 + \absm{\nabla w}^2}$, the dynamic (weighted) normal vector becomes $\bn^w \circ \Phi_w = (- \nabla w,1)^\top$, and then one can easily compute
    \begin{equation*}
        \bn \cdot \bn^w \circ \Phi_w = 1.
    \end{equation*}

Now we are able to describe the rest part of the model. The elastic structure is described by a linear fourth-order (bending) equation in Lagrangian coordinates as
\begin{alignat}{3}
	\label{eqs:w-equation}
	\pt^2 w + \Delta^2 w - \nu_s \Delta \pt w & = \mathcal{F}^w, && \ton \Gamma_T, 
\end{alignat}
and $ \mathcal{F}^w = \mathcal{F}^w(\vr, \bu) $ denotes the external stress from the fluid, which will be determined.

The \textit{kinematic coupling condition} is of the so-called \textit{Navier-slip} type,
\begin{alignat}{3}
    \label{eqs:velocity-boundary}
    (\bu \cdot \bn^w) \circ \Phi_w
    & = (\pt w \bn) \cdot (\bn^w \circ \Phi_w), && \ton \Gamma_T, \\
    \label{eqs:slip}
    (\bbs(\nabla \bu) \bn^w)_{\btau^w} \circ \Phi_w & = - \alpha (\bu - \pt w \bn \circ \Phi_w^{-1})_{\btau^w} \circ \Phi_w, && \ton \Gamma_T,
\end{alignat}
and
\begin{alignat}{3}
    \label{eqs:velocity-bottom}
    - \bu \cdot \be_3 & = 0, && \ton \Gamma_T^0, \\
    \label{eqs:slip-bottom}
    (\bbs(\nabla \bu) \be_3 + \alpha_0 \bu)_{\btau^0} & = 0, && \ton \Gamma_T^0,
\end{alignat}
where $\Gamma_T^0 \coloneqq \Gamma_0 \times (0,T)$, $ (\cdot)_{\btau^w} \coloneqq (\bbi - \bn^w \otimes \bn^w) \cdot $ and $ (\cdot)_{\btau^0} \coloneqq (\bbi - \be_3 \otimes \be_3) \cdot $ are the tangential projections with respect to the normal $ \bn^w $ on $ \Gamma^w $ and $\bn$ on $\Gamma_0$, respectively. The constant $\alpha, \alpha_0 \geq 0$ are the friction coefficient.
From \eqref{eqs:velocity-boundary} and \eqref{eqs:slip}, we are not requiring that fluid particles adhere at the boundary and there is a slip between the fluid and the elastic boundary.
Moreover, the \textit{dynamic coupling condition} reads
\begin{equation}
	\label{eqs:dynamicboundary}
 	\mathcal{F}^w(\vr, \bu) = - [(\bbs(\nabla \bu) - p(\vr) \bbi) \bn^w] \circ \Phi_w \cdot \bn, \ton \Gamma_T,
\end{equation}
which allows the current (Eulerian) normal traction from the fluid to act on the structure in the reference (Lagrangian) direction $ \bn $.
Finally, the system is subjected to the initial data
\begin{alignat}{4}
	\vr(0) & = \vr_0, \quad (\vr \bu)(0) && = \mathbf{m}_0, && \tin \Omega^{w_0}, \\
	w(0) & = w_0, \quad \pt w(0) && = w_1, && \ton \Gamma.
\end{alignat}

\end{subequations}

\begin{remark}
    \label{rem:discussion-boundary}
    If $\Omega \in \bbtwo$, we know $\bn \cdot \bn^w \circ \Phi_w = 1$. Then \eqref{eqs:velocity-boundary} simply reduces to
        \begin{equation}
            \label{eqs:pt_w=un}
            \pt w = (\bu \cdot \bn^w) \circ \Phi_w, \ \text{ on } \Gamma_T.
        \end{equation}
        This is a free boundary type kinematic condition, which, in our case, plays a significant role in further analysis (see Lemma \ref{lem:equi-integrability-with} and Proposition \ref{prop:more-spatial-regularity}), concerning the spatial regularity of $w$. 

		From the viewpoint of the fluid dynamics with free boundaries, \eqref{eqs:pt_w=un} is a usual condition along with flat reference geometry. In particular, we want to mention some related work \cite{CCS2007,CS2010,RT2019}, where general surface energies were considered and the boundary conditions were in fact reformulated as 
		\begin{equation}
            \label{eqs:free-boundary}
            (\bbs(\nabla \bu) - p(\vr) \bbi) \bn^w
		  = - \mathbf{t}, \quad
		  \bu \cdot \bn^w = \pt w \circ \Phi_w^{-1}, \ton \Gamma_T^w. 
		\end{equation}	
		for some traction vector $ \mathbf{t} $ (general surface energy). 
        This condition was also adopted in \cite{GGP2012} to prove the continuous dependence for weak solutions of a fluid-structure interaction problem in two dimensions. 
\end{remark}
\begin{remark}	
    Concerning the slip fact in applications, e.g., cardiovascular tissue and cell-seeded tissue constructs, Muha--\v{C}ani\'{c} \cite{MC2016} studied a fluid-structure interaction problem with \emph{Navier-slip} boundary conditions, allowing full displacements $ \bm{w}: (0,T) \times \Gamma \to \bbr^3 $, instead of only transverse displacement. The \emph{Navier-slip} boundary conditions there were 
		\begin{alignat*}{3}
		  (\bu \cdot \bn^w) \circ \Phi_w
		  & = \pt \bm{w} \cdot \bn^w \circ \Phi_w, && \ton \Gamma_T, \\
		  (\bbs(\nabla \bu) \bn^w)_{\btau^w} \circ \Phi_w & = \alpha (\pt \bm{w} - \bu)_{\btau^w} \circ \Phi_w, && \ton \Gamma_T.
		\end{alignat*}
        From above if we only assume the transverse displacement, i.e., substituting $\pt \bm w$ by $\pt w \bn$, one recovers exactly the boundary conditions \eqref{eqs:velocity-boundary} and \eqref{eqs:slip}.

\end{remark}

\subsection{Literature Review}
In the past few decades, mathematical analysis in fluid dynamics has developed significantly, leading to a wealth of literature. Particularly, in the last twenty years, extensive exploration has been conducted on the interaction between fluid and elastic structures.

The investigation of weak solutions to incompressible Navier--Stokes interacting with elastic structures (moving boundary) starts from the beginning of this century. The first result went back to Chambolle--Desjardins--Esteban--Grandmont \cite{CDEG2004}, where they proved the existence of weak solutions. The solid equation there is assumed to be of fourth order damped, which was later improved by Grandmont \cite{Grandmont2008} without damping. 
Since Muha--\v{C}ani\'c \cite{MC2013}, the existence of weak solutions was extended to the case with a one-dimensional cylindrical Koiter shell model. A time discretization method combined with an operator splitting method was proposed to prove the existence of weak solutions, inspired by numerical studies. See also \cite{MC2016} for the extension to the Navier-slip boundary condition. In the meanwhile, Lengeler--R\r{u}\v{z}i\v{c}ka \cite{LR2014} considered the general geometry via Hanzawa transformation, by which the weak solution was showed to be existed by a regularization procedure and a set-valued fixed-point argument. Recently, Muha--Schwarzacher \cite{MS2022} took a nonlinear Koiter shell model into account, for which they explored more spatial regularity of the displacement hidden in the model, such that the existence of weak solutions is available. We would like to mention that there was an interesting work \cite{BKS2023}, which took the variational viewpoint to prove the existence of weak solutions for the bulk fluid-solid interactions.
Concerning the well-posedness, Cheng--Coutand--Shkoller \cite{CCS2007} studied the Navier--Stokes equations interacting with a nonlinear elastic biofluid shell via higher-order energy methods, obtaining the existence and uniqueness of strong solutions to such model, also in Cheng--Shkoller \cite{CS2010}. The strong solutions of the (regularized) linear model were investigated by Beir\~ao da Veiga \cite{BdV2004}, Lequeurre \cite{Lequeurre2011,Lequeurre2013}, and later improved/extended in \cite{Breit2023,BMSS2023,DT2019,GH2016,GHL2019,KT2022,MRR2020}. In particular, Djebour--Takahashi \cite{DT2019} proved the strong well-posedness of an incompressible fluid-structure interaction problem with the Navier-slip boundary condition. Kukavica--Tuffaha \cite{KT2022} showed the local well-posedness of a model coupling the \textit{inviscid incompressible} fluids and a viscoelastic structure, which is exactly one of the motivations to study the singular limit problem of our system, {to rigorously derive the model in \cite{KT2022}}.

With the development of effective techniques for handling fluid-structure interaction problems, the work on compressible coupled problems arose. The first result of global weak solutions started from Breit--Schwarzacher \cite{BS2018}, concerning compressible isentropic Navier--Stokes equation interacting with a linear Koiter shell model, which provides a compressible counterpart of the results in Lengeler--R\r{u}\v{z}i\v{c}ka \cite{LR2014}. In \cite{BS2018}, they developed the theory in \cite{LR2014} with techniques for compressible fluids, e.g. Lions \cite{Lions1998}, Feireisl et al. \cite{Feireisl2004,FNP2001}. Later on, Breit--Schwarzacher \cite{BS2021} considered the Navier--Stokes--Fourier equations with a nonlinear Koiter shell. Recently, M\'{a}cha--Muha--Ne\v{c}asov\'{a}--Roy--Trifunovi\'{c} \cite{MMNRT2022} generalized the compressible fluid-structure interaction problems to the heat conducting case, via a domain extension technique and time discretization scheme. A compressible multicomponent fluid-structure interaction problem was addressed by Kalousek--Mitra--Ne\v{c}asov\'a \cite{KMN2023} with some extensions in this direction. The existence and uniqueness of strong solutions of compressible fluids with linear solid equations are referred to \cite{MRT2021,MT2021NARWA,Mitra2020}.

{In the study of compressible fluids, a particularly intriguing problem involves singular limits -- specifically, the incompressible and inviscid limit, where a compressible viscous fluid becomes effectively incompressible and inviscid. This occurs in the regime of low Mach number and high Reynolds number (with $ \mathrm{Ma} = {U_f}/{\sqrt{p_f/\varrho_f}} $ and $ \mathrm{Re} = {\varrho_f U_f L}/{\nu_f} $ respectively, see Section \ref{sec:rescaled-system} for more details), while the fluid dynamics are described by the incompressible Euler equations}. {Considering them simultaneously allows us to directly derive the target model. Moreover, studying the incompressible inviscid limit enhances our understanding of how solutions to the Navier–Stokes equations behave as viscosity tends to zero, and how compressible fluids transition to incompressibility, especially near solid boundaries.}
There are tons of literature in this direction and we only want to mention several typical ones.
Let us start with the seminal work of Klainerman and Majda \cite{KM1}, where the low Mach number limit for systems of equations describing a motion of fluid  is shown. Studying various types of singular limits allows us to eliminate unimportant or unwanted modes of motion as a consequence of scaling and asymptotic analysis. 
There were used two possible ways to introduce the Mach number into the system, which are different from the physical point of view, but  from the mathematical one - completely equivalent. The first approach considers a varying equation of state as well as the transport coefficients see the works of Ebin \cite{EB1},  Schochet \cite{SCH2}. 
The second way is to  use the dimensional analysis, see Klein \cite{Kl}.
The mathematical analysis of singular limits can be divided into two types of frame. In the frame of strong solutions we can refer to works of Gallagher \cite{Gallag}, Schochet \cite {SCH2}, Danchin \cite {Da} or Hoff \cite{Ho}. 
In the frame of weak solution, because of the seminal works of Lions \cite{Lions1998} and its extension by Feireisl et al. \cite{FNP2001}, we can mention e.g. works of Desjardins--Grenier \cite{DesGre}, Desjardins--Grenier--Lions--Masmoudi \cite{DGLM}, Feireisl--Novotn\' y  \cite{FN2017}. Moreover, the singular limits where we start with the weak solutions finally derive the strong solutions, using so-called weak strong uniqueness, see \cite{FN2017}. This provides us with more information on the convergence rate.

In contrast to the growing literature on (compressible) fluid-structure interaction problems, and singular limits in compressible fluids, the study of compressible fluid-structure interaction problems with slip boundary condition is unavailable prior to the weak solutions with no-slip condition \cite{BS2018,MMNRT2022,KMN2023}, and more interestingly how are the limiting process of compressible fluids happening, encountering elastic structure, is not clear at all. \textbf{To the best of our knowledge, this paper presents the first result about the weak solutions of compressible fluids interacting with elastic boundaries under the \textit{Navier-slip boundary condition}, and the first rigorous justification of the low Mach number and high Reynolds number limit for the fluid-structure interaction problems}.

\subsection{Main results}
Let us now state the main results of the article. The first theorem is the existence of weak solutions to the compressible fluid-structure interaction model with the Navier-slip boundary condition.
\begin{theorem}[Weak solution, proved in {Section \ref{sec:weak}}]
    \label{thm:weak}
    Assume that the initial data satisfies \eqref{eqs:initial}. Let either $ \nu_s > 0 $, $\gamma > \frac{3}{2}$ or $\Omega \in \bbtwo$, $\nu_s = 0$, $ \gamma > \frac{12}{7} $. Then there exists $ T > 0 $ and a weak solution to \eqref{eqs:FSI-Model} in the sense of Definition \ref{def:bounded-weak}. Moreover, either $ T = + \infty $ or the domain $ \Omega^w(t) $ degenerates in the sense of self-contact as $ t \rightarrow T^- $.
\end{theorem}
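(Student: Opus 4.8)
The plan is to obtain the weak solution by a multi-level approximation that combines the domain-extension device of M\'{a}cha--Muha--Ne\v{c}asov\'{a}--Roy--Trifunovi\'{c} \cite{MMNRT2022} with the regularization of the compressible Navier--Stokes equations of Breit--Schwarzacher \cite{BS2018} and Feireisl et al.\ \cite{FNP2001}, together with the geometric bookkeeping for the Navier-slip condition in the style of Lengeler--R\r{u}\v{z}i\v{c}ka \cite{LR2014}. I would fix the displacement $w$ in a small ball of a suitable space so that $w \in (a_{\partial\Omega},b_{\partial\Omega})$ and $\bPhi_w(t,\cdot)$ is a diffeomorphism onto $\Omega^w(t)$; solve a decoupled, regularized fluid problem; update $w$ through the dynamic coupling \eqref{eqs:dynamicboundary}; and close the loop by a fixed-point argument. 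Successive limit passages then remove the regularizations, and the restrictions on $\gamma$ enter exactly in the recovery of the pressure and of the strong convergence of the density.

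\textbf{Construction.} For fixed $w$, embed every $\Omega^w(t)$ into a large fixed domain $B\subset\bbr^3$ and solve on $B$ the compressible Navier--Stokes system with: (i) a penalization term acting only in $B\setminus\overline{\Omega^w(t)}$ that drives the velocity there towards the solid motion $\pt w\,\bn$, so that the free-interface conditions \eqref{eqs:velocity-boundary}--\eqref{eqs:slip} are recovered in the singular limit; (ii) artificial viscosity $\varepsilon\Delta\vr$, a term $\varepsilon\nabla\vr\cdot\nabla\bu$ in the momentum equation, and an augmented pressure $\vr^\gamma+\delta\vr^\beta$ with $\beta$ large; (iii) a Galerkin discretization of the velocity adapted to the kinematic constraint \eqref{eqs:velocity-boundary}, coupled with the linear shell equation $\pt^2w+\Delta^2w-\nu_s\Delta\pt w=\mathcal{F}^w$, which is solved directly. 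Updating $w$ from $\mathcal{F}^w$ and applying a Schauder/Schaefer fixed point yields a solution of the coupled regularized system; the compactness of the fixed-point map is provided by the parabolic smoothing of $-\nu_s\Delta\pt w$ when $\nu_s>0$, and by the additional spatial regularity of $w$ hidden in the kinematic condition \eqref{eqs:pt_w=un} (Proposition \ref{prop:more-spatial-regularity}) when $\nu_s=0$ and $\Omega\in\bbtwo$.

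\textbf{A priori bounds and energy inequality.} Testing the momentum equation with $\bu$, the shell equation with $\pt w$, and using \eqref{eqs:velocity-boundary}--\eqref{eqs:dynamicboundary}, the interface stress terms cancel while the slip terms produce the nonnegative dissipation $\alpha\int_{\Gamma^w}\abs{(\bu-\pt w\,\bn\circ\bPhi_w^{-1})_{\btau^w}}^2$ (plus the analogous $\alpha_0$ term on $\Gamma_0$ when $\Omega\in\bbtwo$). This gives, uniformly in all approximation parameters, control of $\esssup_t\big(\norm{\sqrt\vr\,\bu}_{L^2}^2+\norm{\vr}_{L^\gamma}^\gamma+\norm{\pt w}_{L^2}^2+\norm{\Delta w}_{L^2}^2\big)$, of $\norm{\nabla\bu}_{L^2_{t,x}}$, and of $\nu_s\norm{\nabla\pt w}_{L^2_{t,x}}$, together with the renormalized continuity equation. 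Standard elliptic/parabolic estimates on the shell equation then upgrade the regularity of $w$ according to whether $\nu_s>0$ or not.

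\textbf{Limit passage and main obstacle.} The regularizations are removed in the order Galerkin $\to$ penalization $\to$ $\varepsilon\to 0$ $\to$ $\delta\to 0$, following the effective-viscous-flux identity and Feireisl's oscillation-defect-measure machinery \cite{FNP2001}, transported onto the moving geometry via $\bPhi_w$ (or via extension by zero outside $\Omega^w(t)$). The step I expect to be the main obstacle is the density/pressure compactness at small $\gamma$ up to the \emph{moving} boundary: the only regularity available for $\Gamma^w(t)$ is that furnished by the structure energy --- merely $w\in L^\infty(H^2)$ when $\nu_s=0$, improved by the parabolic smoothing when $\nu_s>0$ --- so the Bogovskii-type test function needed to upgrade the integrability of $\vr^\gamma$ near $\Gamma^w(t)$, and the curvature terms entering the momentum balance through the slip condition \eqref{eqs:slip}, must be controlled on moving domains of only Lipschitz (resp.\ slightly better) regularity; this is precisely where $\gamma>\tfrac{12}{7}$ (flat geometry, no damping) and $\gamma>\tfrac32$ (with damping) are needed, cf.\ Lemma \ref{lem:equi-integrability-with}. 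Once this is in hand one obtains a weak solution in the sense of Definition \ref{def:bounded-weak}, local in time because $w$ must remain in $(a_{\partial\Omega},b_{\partial\Omega})$; a continuation argument then yields a maximal $T$ which is either $+\infty$ or the time at which $\Omega^w(t)$ degenerates by self-contact.
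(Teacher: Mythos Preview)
Your overall plan is sound and correctly isolates the decisive technical point (equi-integrability of the pressure near the moving boundary, Lemma~\ref{lem:equi-integrability-with}, which is where the thresholds $\gamma>\tfrac32$ and $\gamma>\tfrac{12}{7}$ enter). However, your construction differs from the paper's in two respects worth flagging.

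\textbf{Construction of the approximate solutions.} The paper does \emph{not} use a Schauder/Schaefer fixed point on the displacement. Instead it follows \cite{MMNRT2022}: an extended problem on a large fixed ball $B$, with the viscosities smoothly cut off outside $\Omega^w(t)$ by a parameter $\kappa$ and the kinematic condition penalized \emph{on the interface} via
\[
\frac{1}{\kappa}\int_0^T\!\!\int_{\Gamma^{w}(t)}\big((\bu-\pt w\,\bn\circ\bPhi_w^{-1})\cdot\bn^w\big)\big((\bphi-\psi\,\bn\circ\bPhi_w^{-1})\cdot\bn^w\big)\,\d\cH^2\dt,
\]
and then solves this by a \emph{time discretization combined with an operator splitting} (fluid and structure subproblems solved alternately), passing to the limit in the time step. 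Your exterior bulk penalization ``driving $\bu$ towards $\pt w\,\bn$ in $B\setminus\overline{\Omega^w(t)}$'' would enforce the full no-slip condition in the limit, not the Navier-slip condition \eqref{eqs:velocity-boundary}--\eqref{eqs:slip}; only the \emph{normal} component should be penalized, and the tangential slip term $\alpha\int_{\Gamma^w}(\bu-\pt w\,\bn)_{\btau^w}\cdot(\bphi-\psi\,\bn)_{\btau^w}$ must be kept explicitly in the approximate momentum formulation.

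\textbf{Role of the additional spatial regularity.} You invoke Proposition~\ref{prop:more-spatial-regularity} to obtain compactness of the fixed-point map. In the paper the additional regularity $w^\delta\in L^2_t H^{2+s}_\Gamma$ is used at a different, and genuinely slip-specific, place: after the uniform bounds, one must \emph{recover the nonlinear kinematic identity} $\tr_{\Gamma^w}\bu\cdot(\bn^w\circ\bPhi_w)=(\pt w\,\bn)\cdot(\bn^w\circ\bPhi_w)$ in the limit $\delta\to 0$, and this requires strong convergence of the deformed normal $\bn^{w^\delta}\circ\bPhi_{w^\delta}\to\bn^w\circ\bPhi_w$ in $L^2_tC_\Gamma$, cf.\ \eqref{eqs:n^w-strong-convergence}. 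This is a new difficulty absent in the no-slip case and is precisely why the paper distinguishes between $\nu_s>0$ (damping gives $\pt w\in L^2_tH^1_\Gamma$) and the flat case $\Omega\in\bbtwo$, $\nu_s=0$ (where one uses $\pt w=(\bu\cdot\bn^w)\circ\bPhi_w$ together with trace regularity of $\bu$ and multiplication in fractional Sobolev spaces). Your outline does not mention this limit-recovery step; in a fixed-point scheme it would surface analogously when passing to the last limits.
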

\begin{remark}
    Here different restrictions on elastic damping $\nu_s$ and adiabatic exponent $\gamma$ come essentially from geometric settings and boundary conditions. In the case of general reference domain with $\nu_s = 0$, it is still unknown about the existence of weak solutions to such models with \emph{Navier-slip boundary condition}. 
\end{remark}
\begin{remark}
	It is noticed that the weak-strong uniqueness principle for the weak solutions constructed in Theorem \ref{thm:weak} is addressed in a subsequent manuscript \cite{LMN-WSU} by authors.
\end{remark}

The second result concerns the singular limit problems in fluid dynamics. In particular, after certain rescalings, we justify the limit system rigorously in the regime of \textit{low Mach number and high Reynolds number}, i.e., incompressible inviscid limit. The following is an abbreviated version. 
For a precise description of the limit system and convergent rate, see Section \ref{sec:singular-limits} and Theorem \ref{thm:singular-limit-full}.
\begin{theorem}[Incompressible Inviscid limit]
    \label{thm:singular-limit}
    Assume that the initial data is well-prepared. Let $\Omega \in \bbtwo$, $ \nu_s > 0 $, $ \gamma > \frac{3}{2} $, and $ (\vr, \bu, w) $ be a weak solution to the compressible fluid-structure interaction problem \eqref{eqs:FSI-Model} constructed in Theorem \ref{thm:weak}, with $ w $ satisfying an additional regularity. Then up to a certain time $ T > 0 $, the weak solution $ (\vr, \bu, w) $ converges to the strong solution of an {incompressible} Euler-plate interaction system, as the Mach number goes to zero and the Reynolds number tends to infinity. Moreover, we have a certain convergent rate depending on the initial data. 
\end{theorem}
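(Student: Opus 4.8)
\emph{Proof strategy.} The proof is a quantitative singular-limit estimate run through the same relative entropy machinery developed for the weak--strong uniqueness (Theorem~\ref{thm:weak-strong}, Section~\ref{sec:proof-weak-strong}), now applied to a family of rescaled problems. First I would rescale \eqref{eqs:FSI-Model} on the flat geometry $\Omega \in \bbtwo$: introduce the Mach number $\varepsilon \in (0,1)$, so that $\nabla p(\vr)$ becomes $\varepsilon^{-2}\nabla p(\vr_\varepsilon)$, and a Reynolds number $\mathrm{Re} = \mathrm{Re}(\varepsilon) \to \infty$, so that $\mu,\lambda$ are replaced by $\mu_\varepsilon = \mu/\mathrm{Re}$, $\lambda_\varepsilon = \lambda/\mathrm{Re}$ and the friction coefficients by $\alpha_\varepsilon,\alpha_{0,\varepsilon}$ of the same order (the physical slip length being fixed); write the corresponding weak solution of Theorem~\ref{thm:weak} as $(\vr_\varepsilon,\bu_\varepsilon,w_\varepsilon)$. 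The formal limit is an Euler--plate interaction system: a constant density $\tilde{\vr} \equiv 1$, a divergence-free field $\tilde{\bu}$ solving the incompressible Euler equations in the moving domain $\Omega^{\tilde{w}}(t)$ subject to the kinematic condition \eqref{eqs:pt_w=un} on the plate and impermeability \eqref{eqs:velocity-bottom} on the rigid bottom (the slip conditions \eqref{eqs:slip}, \eqref{eqs:slip-bottom} disappearing with the viscosity), coupled to the plate equation $\pt^2\tilde{w} + \Delta^2\tilde{w} - \nu_s\Delta\pt\tilde{w} = (\pi\,\bn^{\tilde{w}})\circ\Phi_{\tilde{w}}\cdot\bn$ forced by the limiting pressure $\pi$ (the constant background being absorbed into the reference configuration); the precise form of this system is given in Section~\ref{sec:singular-limits}. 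I would then establish, adapting the inviscid incompressible well-posedness theory referenced in the introduction (in particular \cite{KT2022,DT2019}), the local-in-time existence of a unique, sufficiently smooth solution $(\tilde{\bu},\tilde{w},\pi)$ on some interval $[0,T]$. The ``additional regularity'' hypothesis on $w$ in the statement is precisely what is needed to compare the weak solution with this smooth target on a common moving geometry, via a Hanzawa-type map and Proposition~\ref{prop:more-spatial-regularity}.

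The core step is to insert the smooth target into the relative entropy inequality for the rescaled weak solution --- the inequality of Section~\ref{sec:proof-weak-strong}, now retaining the $\varepsilon^{-2}$ weight on the pressure potential and the $\mathrm{Re}^{-1}$ weights on the dissipation. With $H(\vr) = \vr^\gamma/(\gamma - 1)$ and the functional
\[
\mathcal{E}_\varepsilon(t) = \int_{\Omega^{w_\varepsilon}(t)}\!\Big(\tfrac12\vr_\varepsilon\abs{\bu_\varepsilon - \tilde{\bu}}^2 + \tfrac{1}{\varepsilon^2}\big(H(\vr_\varepsilon) - H'(1)(\vr_\varepsilon - 1) - H(1)\big)\Big)\dx + \tfrac12\int_\Gamma\abs{\pt(w_\varepsilon - \tilde{w})}^2 + \tfrac12\int_\Gamma\abs{\Delta(w_\varepsilon - \tilde{w})}^2,
\]
and after extending/transporting $\tilde{\bu}$ onto the moving domain $\Omega^{w_\varepsilon}(t)$, one arrives at
\[
\mathcal{E}_\varepsilon(t) + \frac{c}{\mathrm{Re}}\int_0^t\!\int_{\Omega^{w_\varepsilon}(s)}\abs{\nabla(\bu_\varepsilon - \tilde{\bu})}^2\,\d x\,\d s \le \mathcal{E}_\varepsilon(0) + \int_0^t \mathcal{R}_\varepsilon(s)\,\d s,
\]
where the remainder $\mathcal{R}_\varepsilon$ consists of: (i) the convective term $\int_{\Omega^{w_\varepsilon}}\vr_\varepsilon(\tilde{\bu} - \bu_\varepsilon)\otimes(\tilde{\bu} - \bu_\varepsilon):\nabla\tilde{\bu}$, bounded by $C\norm{\nabla\tilde{\bu}}_{L^\infty}\mathcal{E}_\varepsilon$; (ii) the viscous and friction contributions, i.e. $\mathrm{Re}^{-1}\int\bbs(\nabla\tilde{\bu}):\nabla(\tilde{\bu} - \bu_\varepsilon)$ and the interface friction terms on $\Gamma_T^{w_\varepsilon}$ and $\Gamma_T^0$, which after Young's inequality (absorbing part of the $\mathrm{Re}^{-1}$-dissipation) are $\le C\mathcal{E}_\varepsilon + O(\mathrm{Re}^{-1})$; (iii) the pressure/acoustic terms --- here the essential point is that every dangerous $\varepsilon^{-2}$ term carries the factor $\Div\tilde{\bu} \equiv 0$ or $\nabla\tilde{\vr} \equiv 0$, $\pt\tilde{\vr} \equiv 0$ and thus vanishes identically, so that no negative power of $\varepsilon$ survives; and (iv) a geometric defect measuring the mismatch of $\Gamma^{w_\varepsilon}$ and $\Gamma^{\tilde{w}}$ (arising in the transported convective, pressure and coupling integrals, combined with the plate coupling terms), controlled by $C\mathcal{E}_\varepsilon$ plus a Jacobian error of order $\norm{w_\varepsilon - \tilde{w}}_{H^2(\Gamma)}$, itself absorbable into $\mathcal{E}_\varepsilon$.

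Absorbing the $\mathcal{E}_\varepsilon$-proportional part of $\mathcal{R}_\varepsilon$ and applying Gronwall's lemma yields
\[
\sup_{t\in[0,T]}\mathcal{E}_\varepsilon(t) \le C\big(\mathcal{E}_\varepsilon(0) + \mathrm{Re}^{-1}\big)e^{CT},
\]
and, since the initial data are well-prepared, no acoustic corrector is needed and $\mathcal{E}_\varepsilon(0) \le \delta_0(\varepsilon) \to 0$ at an explicit rate determined by the data. This is the asserted convergence with rate on the common interval $[0,T]$: $\bu_\varepsilon - \tilde{\bu} \to 0$ in $L^\infty(0,T;L^2)$, $\vr_\varepsilon \to 1$ in the norm induced by the relative pressure potential, and $w_\varepsilon \to \tilde{w}$ in $W^{1,\infty}(0,T;L^2(\Gamma)) \cap L^\infty(0,T;H^2(\Gamma))$, all suitably interpreted on the moving domains. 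Passing to the limit in the coupling conditions \eqref{eqs:velocity-boundary}, \eqref{eqs:slip}, \eqref{eqs:dynamicboundary} then identifies $(\tilde{\bu},\tilde{w},\pi)$ as the strong solution of the Euler--plate system; the precise statement with explicit $T$, scaling relation between $\mathrm{Re}$ and $\varepsilon$, and rate is Theorem~\ref{thm:singular-limit-full}.

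The principal obstacle is the one already responsible for the delicacy of Theorem~\ref{thm:weak-strong}, now compounded by the vanishing viscosity. Because $(\vr_\varepsilon,\bu_\varepsilon,w_\varepsilon)$ lives on $\Omega^{w_\varepsilon}(t)$ while the target lives on $\Omega^{\tilde{w}}(t)$, one must transport the smooth solution onto the weak solution's moving geometry with quantitative control of the Jacobian, of the dynamic weighted normal $\bn^{w_\varepsilon}$, and of the induced errors in the convective, pressure and coupling terms --- this is exactly what the additional regularity on $w$ buys (cf. Proposition~\ref{prop:more-spatial-regularity} and the kinematic identity \eqref{eqs:pt_w=un}), and it is why the result is confined to the flat geometry. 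Simultaneously, as $\mathrm{Re}\to\infty$ the viscous boundary term in the relative entropy no longer supplies a dissipation dominating the interface remainder; one must exploit the \emph{Navier-slip} structure, which --- in contrast to a no-slip condition --- produces no strong Prandtl-type boundary layer, so that the boundary contribution stays controllable, and one must tune the relative rate of $\mathrm{Re}(\varepsilon)$ and $\varepsilon$ so that the viscous and acoustic remainders are of compatible orders. A secondary technical point is to verify that $\mathcal{E}_\varepsilon$ genuinely dominates the full plate energy $\norm{\Delta(w_\varepsilon - \tilde{w})}_{L^2(\Gamma)}^2 + \norm{\pt(w_\varepsilon - \tilde{w})}_{L^2(\Gamma)}^2$, which requires treating the coupling terms of the plate equation jointly with the fluid boundary terms, using the damping $\nu_s > 0$.
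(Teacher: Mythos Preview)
Your overall architecture is right, but there is a genuine gap at the step you pass over with the phrase ``after extending/transporting $\tilde{\bu}$ onto the moving domain $\Omega^{w_\varepsilon}(t)$''. The na\"ive transport $\tilde{\bu}\circ\bPsi^\varepsilon$ (with $\bPsi^\varepsilon=\bPhi_{\tilde w}\circ\bPhi_{w_\varepsilon}^{-1}$) is \emph{neither} divergence-free in $\Omega^{w_\varepsilon}(t)$ \emph{nor} admissible for the kinematic condition $\bU\cdot\bn^{w_\varepsilon}=\pt\eta\circ\bPhi_{w_\varepsilon}^{-1}$ required of test functions in the relative entropy inequality. Consequently your point (iii) fails: the $\varepsilon^{-2}$ pressure remainder contains a term $\varepsilon^{-2}\int (p(\vr_\varepsilon)-p(1))\,\Div\bU$, and $\Div\bU=\Div(\tilde{\bu}\circ\bPsi^\varepsilon)$ is not zero but only of order $\|\nabla\bPsi^\varepsilon-\bbi\|$; you are then left with an $O(\varepsilon^{-2})\cdot O(\mathcal E_\varepsilon^{1/2})$ contribution that cannot be absorbed. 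Likewise, without the admissibility condition you cannot even apply the relative energy inequality with this test function, and the truncation/harmonic-extension trick from Section~\ref{sec:proof-weak-strong} does not close here because $\nu\to 0$ kills the dissipation you used there to absorb the boundary mismatch.

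The paper resolves this by replacing the na\"ive transport with the \emph{Piola transform}
\[
\bv \;=\; J^\varepsilon\,\cA^\varepsilon\,(\tilde{\bv}\circ\bPsi^\varepsilon),\qquad J^\varepsilon=\det\nabla\bPsi^\varepsilon,\ \ \cA^\varepsilon=(\nabla\bPsi^\varepsilon)^{-1},
\]
which, thanks to the Piola identity $\Div(J^\varepsilon(\cA^\varepsilon)^\top)=0$ and the normal transformation rule $\bn^\eta\circ\bPsi^\varepsilon=J^\varepsilon(\cA^\varepsilon)^\top\bn^{w^\varepsilon}$, is simultaneously divergence-free and satisfies $\bv\cdot\bn^{w^\varepsilon}=\pt\eta\circ\bPhi_{w_\varepsilon}^{-1}$ exactly. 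The price is a new error $\bF^\varepsilon$ in the transported Euler equation containing $\pt(J^\varepsilon\cA^\varepsilon)$ and $\nabla(J^\varepsilon\cA^\varepsilon)$; controlling this in $L^2_tL^{\mathfrak p}_x$ with $\mathfrak p>\max\{4,\tfrac{2\gamma}{\gamma-1}\}$ is precisely where the additional regularity hypothesis \eqref{eqs:w-assumption-thm-3} on $w_\varepsilon$ and the damping $\nu_s>0$ enter (via $\nabla\pt w_\varepsilon\in L^2_tL^2_\Gamma$). Your proposal correctly senses that the extra regularity is tied to the change of domain, but misidentifies its role: it is not merely to control a ``Jacobian error'' absorbable into $\mathcal E_\varepsilon$, but to bound the higher-order quantities generated by the Piola correction that you have not introduced.
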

\begin{remark}
    In this result, we only consider the case $\nu_s > 0$. This is essential as in the limit passage we have to control the error term containing $\nabla \pt w$, cf. \eqref{eqs:F^epsilon-L^2}, whose regularity is not available from the \textit{a priori} energy estimate. On the other hand, with the inviscid limit, i.e., $\nu \to 0$, one cannot employ any regularity of $\bu$, which means we may not gain the regularity of $\pt w$ via the kinematic condition \eqref{eqs:pt_w=un}.
\end{remark}
\begin{remark}
    Note that we require an additional regularity assumption on the weak solution $w$, cf. \eqref{eqs:w-assumption-thm-3}. Indeed, it is necessary in the current framework. Since the domain depends on $w$, we later carry out a transform of the strong limit solution back to the weak domain, which causes much trouble as the \textit{a priori} regularity of $w$ is not sufficient to define a uniform Lipschitz domain. Consequently, a lot of techniques do not apply and no matter how smooth the strong solution is, it will eventually have a relatively low regularity comparable to $w$, cf. Corollary \ref{coro:regularity-loss}. 
    
    Moreover, here the incompressible inviscid limit is investigated, meaning the uniform regularity of the weak solution is much lower. This is another reason we require the additional assumption of $w$, cf. Remark \ref{rmk:not-avoid-assumtion}.

    In a related context of incompressible fluid-structure interaction problems, an extra Lipschitz regularity was assumed by Breit--Mensah--Schwarzacher--Su \cite{BMSS2023} to deal with the so-called \textit{Ladyzhenskaya--Prodi--Serrin} condition.
\end{remark}

\begin{remark}[Incompressible limit]
    \label{rem:incompressible-limit}
    If we only consider the low Mach number limit, the convergence holds as well. Namely, the compressible fluid-structure interaction system will converge to a system coupled the incompressible Navier--Stokes equations and a viscoelastic plate equation with slip boundary conditions. In particular, one can show that the limit passage is valid for $\gamma > 3$ without any regularity assumption. cf. Theorem \ref{coro:IncompressibleLimit}. 
\end{remark}


\subsection{Technical Discussions}
Concerning the \textit{existence of weak solutions}, it is the `bad' elastic boundary that causes much trouble in the approximate scheme. The idea would be either regularizing the elastic boundary (cf. \cite{LR2014,BS2018}), or approximating the boundary with a lager smooth domain (cf. \cite{KMN2023,MMNRT2022}) in the literature. In this work, we will complete the proof via the argument in \cite{MMNRT2022}, cf. Section \ref{sec:weak}. Note that the domain now is not uniform-in-time Lipschitz anymore, due to the low regularity of the displacement. Most of the steps of the proof (cf. Section \ref{sec:ideas-proof}) go through quite well compared to \cite{BS2018,MMNRT2022}, while the significant differences come out, encountering the \textit{Navier-slip} boundary conditions. 
Particularly, here it is not very convincing to show the existence of weak solutions in the general geometry without structure damping ($\nu_s = 0$), as we have to recover the nonlinear kinematic condition on the boundary. 
Instead, we differ from the type of reference geometries (flat or not) and include the structure viscosity to compensate for the boundary conditions. More precisely, in flat geometry, $\Omega = \Gamma \times [0,1]$, we observe that \eqref{eqs:velocity-boundary} is actually can be rewritten as $\pt w = (\bu \cdot \bn^w) \circ \Phi_w$, cf. \eqref{eqs:pt_w=un}. Then one gains more regularity of $\pt w$ by the multiplication of Sobolev--Slobodeckij spaces, as well as the trace regularity of $\bu$. This essentially facilitates further analysis for \textit{additional spatial regularities for $w$} and for \textit{equi-integrability of $\vr$ near the boundary}, cf. Proposition \ref{prop:more-spatial-regularity} and Remark \ref{rem:equi-integrability}. Note that the \textit{additional spatial regularity} is required here to recover the kinematic condition, which involves the deformed normal vector $\bn^w$, depending nonlinearly on $ \nabla w $. However, in general geometry, we are only able to use \eqref{eqs:velocity-boundary}, the \textit{Navier-slip} kinematic condition $(\bu \cdot \bn^w) \circ \Phi_w = (\pt w \bn) \cdot (\bn^w \circ \Phi_w)$, by which no more regularity for $\pt w$ can be achieved. Hence, to compensate for the loss, we require $\nu_s > 0$, i.e., $\pt w \in L^2(0,T;W^{1,2}(\Gamma))$, which plays the same role as above. In this case, to prove the equi-integrability of $\vr$ near the boundary (Lemma \ref{lem:equi-integrability-with}), one can use this regularity directly and the range of $\gamma$ can be relaxed to $\gamma > \frac{3}{2}$ (compared to the case $\nu_s = 0$ with $\gamma > \frac{12}{7}$).

In order to investigate the \textit{incompressible inviscid limit} (Mach number $\varepsilon \to 0$, reciprocal of the Reynolds number $\nu \to 0$, simultaneously), we employ a modified \textit{relative entropy method}, which is usually valid for fixed domains. In our case, one shall compare the weak solutions and the limit strong solution that are in fact defined in different domains. If we perform the usual transformation $ \bPsi = \bPhi_\eta \circ \bPhi_w^{-1} $ that simply composes two natural mappings, cf. \eqref{eqs:bPhi_w}, \eqref{eqs:mapping-Psi} below, the transformed velocity is not divergence-free anymore, while the limit velocity should be solenoidal doe to low Mach number. Moreover, there will be an error term on the boundary by transforming, as the \textit{Navier-slip boundary condition} depends on $ \bn^w $, which is not invariant along with the transform above. 
To overcome the issue, one needs to correct the transform both on the boundary and in the bulk.
Thus, a \textit{Piola transforma} based on $\bPsi$ is proposed, inspired by \cite{GGP2012,SS2022}. Precisely, for the transformed velocity $\bv$ one takes the form of $\bv = (\det\nabla \bPsi) \nabla \bPsi^{-1} (\tilde{\bv} \circ \bPsi)$ for $\tilde{\bv}$ defined in $\Omega^\eta$ with respect to $\eta$. In this case, the \textit{Piola identity} is valid $\Div \big((\det\nabla \bPsi) \nabla \bPsi^{-1}\big) = 0$ and hence the transformed variable is divergence-free (cf. \eqref{eqs:div-v=0}). In the meantime, in view of the transformation between outer normal vectors, it is exactly satisfied with a factor $(\det\nabla \bPsi) \nabla \bPsi^{-\top}$ (cf. \eqref{eqs:v-n-w}). 
Note that the high Reynolds number limit prevents us from cooking the argument up with the regularity of the velocity (of weak solution).
Though we consider a flat reference geometry here and hence $\pt w$ could inherit the trace regularity of $\bu$ formally, it is not applied as no spatial regularity of $\bu$ is available. {To close the estimates, it is demanded the high integrability of the error terms, cf. \eqref{eqs:rhovF^varepsilon}, which contains the Jacobian and the deformation gradient of $\bPsi$ by the Piola transform. In this case, one may require more regularity of the weak solutions, which essentially depends on the elastic damping $\nu_s > 0$ and additional regularities assumed, cf. Remark \ref{rmk:not-avoid-assumtion}.}

\subsection{Outline}
The rest parts are organized as follows. In Section \ref{sec:preli}, some notations, auxiliary results, and geometric settings are introduced. In Section \ref{sec:weak}, the concept of weak solutions and the main ideas of the proof for it are presented, as well as the crucial steps. Based on the weak solutions constructed in Section \ref{sec:weak}, the low Mach number and high Reynolds number limit of the system in flat geometry is justified for well-prepared initial data in Section \ref{sec:singular-limits}, via a modified relative entropy method.

\section{Preliminaries}
\label{sec:preli}

\subsection{Function spaces and auxiliary results.}

In this paper, we recall the usual Lebesgue and Sobolev spaces as $L^p$ and $W^{k,p}$ respectively with their corresponding induced norms, for $k \in \bbn$, $p \in [1,\infty]$. Particularly, $L^p = W^{0,p}$. If $p = 2$, we recognize $H^k = W^{k,2}$.

\begin{lemma}[Multiplication of fractional Sobolev spaces]
	\label{lem:multiplication}
	Let $ \Omega \subset \bbr^d $, $ d \in \bbn $ be a bounded domain with Lipschitz boundary. Let $ 1 \leq p < \infty $ and $ s_1, s_2 \geq s \geq 0 $. If $ u \in W^{s_1,p}(\Omega) $, $ v \in W^{s_2,p}(\Omega) $, then $ uv \in W^{s,p}(\Omega) $, for all $ s < s_1 + s_2 - \frac{d}{p} $. 
	Moreover,
	\begin{equation*}
		\norm{uv}_{W^{s,p}(\Omega)}
		\leq C \norm{u}_{W^{s_1,p}(\Omega)} \norm{v}_{W^{s_2,p}(\Omega)},
	\end{equation*}
	where $ C > 0 $ depends only on $ \Omega $.
\end{lemma}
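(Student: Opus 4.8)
\emph{Proof strategy.} The plan is to (i) push the estimate to $\bbr^d$ by a universal extension, (ii) prove the bilinear bound on $\bbr^d$ by splitting into the cases $s=0$, $0<s<1$, and $s\ge1$, and (iii) restrict back to $\Omega$. For (i): since $\Omega$ is a bounded Lipschitz domain, Stein's (or Rychkov's) extension operator $E$ is simultaneously bounded $W^{\sigma,p}(\Omega)\to W^{\sigma,p}(\bbr^d)$ for all $\sigma\ge0$. With $\Tilde u=Eu$, $\Tilde v=Ev$ we have $uv=(\Tilde u\,\Tilde v)\vert_\Omega$, so $\norm{uv}_{W^{s,p}(\Omega)}\le\norm{\Tilde u\,\Tilde v}_{W^{s,p}(\bbr^d)}$, and it suffices to bound the right-hand side by $C\norm{\Tilde u}_{W^{s_1,p}(\bbr^d)}\norm{\Tilde v}_{W^{s_2,p}(\bbr^d)}$; the constant then depends on $\Omega$ only through the operator norms of $E$. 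Assume $s_1\ge s_2$ by symmetry, and record the governing relation $s_1+s_2>s+\frac dp\ge\frac dp$, which is \emph{strict} and supplies the slack in every Hölder/Sobolev step below.

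\emph{$L^p$ bound and the case $0<s<1$.} By Hölder and the Sobolev embeddings $W^{s_i,p}(\bbr^d)\hookrightarrow L^{p_i}$ (one factor placed in $L^\infty$ if $s_i>\frac dp$), $\norm{\Tilde u\,\Tilde v}_{L^p}\le\norm{\Tilde u}_{L^{p_1}}\norm{\Tilde v}_{L^{p_2}}\lesssim\norm{\Tilde u}_{W^{s_1,p}}\norm{\Tilde v}_{W^{s_2,p}}$, where $\frac1{p_1}+\frac1{p_2}=\frac1p$ can be arranged since $s_1+s_2>\frac dp$. For the Gagliardo seminorm with $0<s<1$ I would start from $\abs{\Tilde u(x)\Tilde v(x)-\Tilde u(y)\Tilde v(y)}\le\abs{\Tilde u(x)}\,\abs{\Tilde v(x)-\Tilde v(y)}+\abs{\Tilde v(y)}\,\abs{\Tilde u(x)-\Tilde u(y)}$, obtaining $[\Tilde u\,\Tilde v]_{W^{s,p}(\bbr^d)}^p\lesssim\int_{\bbr^d}\abs{\Tilde u(x)}^p g_{\Tilde v}(x)\dx$ plus a symmetric term, with $g_{\Tilde v}(x):=\int_{\bbr^d}\abs{\Tilde v(x)-\Tilde v(y)}^p\abs{x-y}^{-d-sp}\,\d y$. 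Hölder in $x$ gives $\int\abs{\Tilde u}^p g_{\Tilde v}\le\norm{\Tilde u}_{L^{pa}}^p\norm{g_{\Tilde v}}_{L^{a'}}$, and Minkowski's integral inequality together with the first-difference characterization of Besov seminorms yields $\norm{g_{\Tilde v}}_{L^{a'}}\lesssim\norm{\Tilde v}_{B^{s}_{pa',p}(\bbr^d)}^p$. It then remains to absorb these into $\norm{\Tilde u}_{W^{s_1,p}}$ and $\norm{\Tilde v}_{W^{s_2,p}}$ via $W^{s_1,p}\hookrightarrow L^{pa}$ (admissible for $\frac1{pa}\ge\frac1p-\frac{s_1}{d}$) and $W^{s_2,p}\hookrightarrow B^{s}_{pa',p}$ (admissible, up to the strict slack, for $\frac1{pa'}\ge\frac1p-\frac{s_2-s}{d}$); since $\frac1{pa}+\frac1{pa'}=\frac1p$, both are possible exactly when $s\le s_1+s_2-\frac dp$, which holds with room to spare. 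Edge configurations ($s_1>\frac dp$, or $s_2=s$ so $\Tilde v\in L^p$ only) are handled with $a=\infty$, resp.\ $a'=1$ together with $[\Tilde v]_{W^{s,p}}\lesssim\norm{\Tilde v}_{W^{s_2,p}}$ since $s\le s_2$. This argument does not distinguish $p=1$ from $p>1$.

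\emph{The case $s\ge1$.} Write $s=m+\sigma$ with $m\in\bbn$, $\sigma\in[0,1)$. For every multi-index $\alpha$ with $\abs\alpha=m$, the Leibniz rule gives $D^\alpha(\Tilde u\,\Tilde v)=\sum_{\beta\le\alpha}\binom\alpha\beta D^\beta\Tilde u\,D^{\alpha-\beta}\Tilde v$, so it suffices to bound $\norm{D^\beta\Tilde u\,D^{\alpha-\beta}\Tilde v}_{W^{\sigma,p}}$. Since $s_i\ge s=m+\sigma$ and $\abs\beta,\abs{\alpha-\beta}\le m$, one has $D^\beta\Tilde u\in W^{s_1-\abs\beta,p}$, $D^{\alpha-\beta}\Tilde v\in W^{s_2-\abs{\alpha-\beta},p}$ with both smoothness exponents $\ge\sigma\ge0$, and $\sigma<(s_1-\abs\beta)+(s_2-\abs{\alpha-\beta})-\frac dp$ is equivalent to $s<s_1+s_2-\frac dp$; hence the previous step applies to each summand, and summing over $\alpha,\beta$ finishes the proof.

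\emph{Anticipated main obstacle.} The one genuinely delicate point is the bookkeeping in the middle step: the Gagliardo seminorm of a product naturally produces \emph{Besov}-type quantities $B^{s}_{q,p}$ (with fine index $p$), rather than the Sobolev--Slobodeckij quantities $W^{s,q}=B^{s}_{q,q}$, and for $p\ne2$ and $s\notin\bbn$ these do not coincide. The resolution is to spend the \emph{strict} margin in $s<s_1+s_2-\frac dp$ to open up a small smoothness gap, after which the Sobolev/Besov embeddings can be applied with arbitrary fine index. (Alternatively, the statement is a special case of the multiplication theorems for fractional-order function spaces in Runst--Sickel or Behzadan--Holst, and one may simply invoke those.)
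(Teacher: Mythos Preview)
The paper does not prove this lemma at all; its entire proof reads ``We refer to \cite[Theorem 7.4]{BH2021} for more general cases.'' You even anticipate this at the end of your proposal when you remark that one may simply invoke Behzadan--Holst, which is precisely what the authors do.

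Your proposal therefore goes well beyond the paper: you outline a genuine self-contained argument via Stein/Rychkov extension to $\bbr^d$, a direct Gagliardo-seminorm estimate for $0<s<1$ using the product splitting, H\"older, and Minkowski, and a Leibniz reduction for $s\ge 1$. The strategy is sound, and you have correctly isolated the one delicate point, namely that the difference quotients naturally land in $B^{s}_{pa',p}$ rather than $W^{s,pa'}$, and that the \emph{strict} gap $s<s_1+s_2-\tfrac dp$ is exactly what lets you trade a little smoothness to fix the fine index. What your route buys is independence from the cited reference and transparency about where the hypothesis $s<s_1+s_2-\tfrac dp$ (as opposed to $\le$) is actually consumed; what the paper's route buys is a one-line proof.
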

\begin{proof}
	We refer to \cite[Theorem 7.4]{BH2021} for more general cases.
\end{proof}

\subsection{Geometric settings and regularities}
\label{sec:geometric-setting}

In this section, we introduce the geometric settings and corresponding regularities. Since the case of flat reference configuration is a special case of general geometry, here we only present the general one. One is also referred to e.g., \cite{BS2018,KMN2023,LR2014,MMNRT2022}. We define the tubular neighborhood of $\partial \Omega$ as
\begin{equation*}
    \cN_a^b = \left\{
            y \in \bbr^3 : y = \Phi(x) + \bn(\Phi(x)) z, x \in \Gamma, z \in (a_{\partial \Omega}, b_{\partial \Omega})
        \right\}.
\end{equation*}
The projection $\pi : \cN_a^b \to \partial \Omega$ denotes the mapping that assigns to each $x \in \cN_a^b$ a unique $\pi(x) \in \partial \Omega$ such that there is $z \in (a_{\partial \Omega}, b_{\partial \Omega})$ satisfying
\begin{equation*}
    x - \pi(x) = \bn(\pi(x)) z,
\end{equation*}
and the signed distance function is defined as $d: \cN_a^b \to (a_{\partial \Omega}, b_{\partial \Omega}) $ such that for $x \in \cN_a^b$,
\begin{equation*}
    d(x) = (x - \pi(x)) \cdot \bn(\pi(x)).
\end{equation*}
For a point $x \in \bbr^3$, we introduce a similar distance function {$\mathfrak{d}: \bbr^3 \to \bbr$} to the boundary $\partial \Omega$ fulfilling
\begin{equation*}
    \mathfrak{d}(x, \partial \Omega)
    = \left\{
        \begin{aligned}
            & - \dist(x, \partial \Omega) \quad && \text{if } x \in \Bar{\Omega}, \\
            & \dist(x, \partial \Omega) \quad && \text{if } x \in \bbr^3 \setminus \Omega.
        \end{aligned}
    \right.
\end{equation*}
Note that $d$ and $\mathfrak{d}$ agree in the tubular $\cN_a^b$. Since the mapping $\Phi$ is assumed to be $C^4(\Gamma)$, it is known that the projection $\pi$ is well-defiend and possesses the $C^3$-regularity, and the distance function $d$ assures the $C^4$-regularity in a neighborhood of $\partial \Omega$ containing $\cN_a^b$. Let $w: [0,T] \times \Gamma \to \bbr$ be a given displacement function with $a_{\partial \Omega} < m < w < M < b_{\partial \Omega}$ with $m, M > 0$ being fixed.

Now let us define a flow map $\bPhi_w: [0,T] \times \bbr^3 \to \bbr^3$ such that
\begin{equation}
    \label{eqs:bPhi_w}
    \bPhi_w(t,x) = x + f_\Gamma(\mathfrak{d}(x)) w(t, \inv{\Phi}(\pi(x))) \bn(\pi(x)),
\end{equation}
where the cut-off function $f_\Gamma \in C_c^\infty(\bbr; [0,1])$ is given by $ f_\Gamma(s) = (f * \omega_\alpha)(s) $
with a standard mollifying kernel $\omega_\alpha$ supporting in $(-\alpha, \alpha)$ for $0 < \alpha < \onehalf \min \{m' - m'', M'' - M'\}$,
and a linear function $f \in W^{1, \infty}(\bbr; [0,1])$ such that $ f = 1 $ in $ (m'', m'' - m'] $ and $ f = 0 $ in $ (- \infty, m''] \cap (M'', \infty) $, as well as $ f' \in \left[- \frac{1}{M'}, - \frac{1}{m'}\right] $. The constructure can be found in \cite[Section 2.1]{KMN2023}.
Here we suppose that
\begin{equation*}
    a_{\partial \Omega} < m'' < m' < m < 0 < M < M' < M'' < b_{\partial \Omega}.
\end{equation*}
Then for $X \in \cN_b^a$, one can rewrite the map $\bPhi_w$ with the help of $X - \pi(X) = \bn(\pi(X)) d(X)$, as
\begin{equation*}
    \bPhi_w(t,X) = (1 - f_\Gamma(d(X)))X + f_\Gamma(d(X)) \big(\pi(X) + \big(d(X) + w(t, \inv{\Phi}(\pi(X)))\big) \bn(\pi(X))\big).
\end{equation*}
Then the corresponding inverse $\inv{\bPhi_w}$ follows
\begin{equation*}
    \inv{\bPhi_w}(t,x) = (1 - f_\Gamma(d(x)))x + f_\Gamma(d(x)) \big(\pi(x) + \big(d(x) - w(t, \inv{\Phi}(\pi(x)))\big) \bn(\pi(x))\big),
\end{equation*}
which implies a mapping $\inv{\bPhi_w} : [0,T] \times \bbr^3 \to \bbr^3$ such that
\begin{equation}
    \label{eqs:bPhi_w^-1}
    \inv{\bPhi_w}(t,x) = x - f_\Gamma(\mathfrak{d}(x)) w(t, \inv{\Phi}(\pi(x))) \bn(\pi(x)).
\end{equation}
In view of the explicit expressions of $\bPhi_w$ and $\inv{\bPhi_w}$, we know that they roughly inherit the same regularity from $w$.

\begin{remark}
    By the constructions above, particularly one has the restrictions
    \begin{alignat*}{3}
        \bPhi_w(t,X) & = \Phi_w(t,\inv{\Phi}(X)) && \ \text{ for all } X \in \partial \Omega, \\
        \bPhi_w(t,X) \circ \Phi(z) & = \Phi_w(t,z) && \ \text{ for all } z \in \Gamma. 
    \end{alignat*}
    With a slight abuse of notations when there is no danger of confusion, we will write $f \circ \bPhi_w \circ \Phi$ as $f \circ \bPhi_w$ for $f \in \Gamma^w$.
\end{remark}


Now we define the generalized Bochner spaces defined on time-dependent domains, cf. \cite{LR2014,BS2018,KMN2023}, and record several necessary lemmas. For $T > 0$, $\eta \in C([0,T] \times \Gamma)$ we define the function spaces on variable domains in the following way for $p,r \in [1,\infty]$
\begin{align*}
    L^p(0,T;L^r(\Omega^w(t)))
    & \coloneqq \big\{
        v \in L^1(Q_T^w):
        v(t) \in L^r(\Omega^w(t)) \text{ for a.e. } t \in (0,T), \\
        & \phantom{\coloneqq \big\{v \in L^1(Q_T^w):}
        \normm{v(t)}_{L^r(\Omega^w(t))} \in L^p(0,T)
    \big\}, \\
    L^p(0,T;W^{1,r}(\Omega^w(t))) 
    & \coloneqq \big\{
        v \in L^p(0,T;L^r(\Omega^w(t))):
        \nabla v \in L^p(0,T;L^r(\Omega^w(t)))
    \big\}, \\
    W^{1,p}(0,T;W^{1,r}(\Omega^w(t))) 
    & \coloneqq \big\{
        v \in L^p(0,T;L^r(\Omega^w(t))):
        \pt v \in L^p(0,T;W^{1,r}(\Omega^w(t))) 
    \big\}.
\end{align*}
Throughout the context, we denote by
\begin{gather*}
	K_t^{s,p} M_y^{r,q} \coloneqq K^{s,p}(0,t; M^{r,q}(\mathcal{O})), \quad s,r \in \bbr, 1 \leq p,q \leq \infty, K,M \in \{W,H\},
\end{gather*}
where either $ \mathcal{O} = \Omega $ if $ y = x $, or $ \mathcal{O} = \Omega^w(t) $ if $ y = w $, or $ \mathcal{O} = \Gamma $ if $ y = \Gamma $. Moreover, $ K_{ty}^{s,p} \coloneqq K^{s,p}(0,t; K^{s,p}(\mathcal{O})) $.

The next lemma quantifies the loss in the regularity for transformations.
\begin{lemma}
    \label{lem:embedding}
    Let $ 1 \leq p \leq \infty $, $1 < q \leq \infty$, and $ w \in L_t^\infty H_\Gamma^2 \cap W_t^{1,\infty} L_\Gamma^2 $ with $ \normm{w}_{L_t^\infty H_\Gamma^2 \cap W_t^{1,\infty} L_\Gamma^2} < \kappa $. Then the mapping $\bv \mapsto \bv \circ \bPhi_w$ is continuous from $L^p(0,T;L^q(\Omega^w(t)))$ to $L^p(0,T;L^r(\Omega))$ and from $L^p(0,T;W^{1,q}(\Omega^w(t)))$ to $L^p(0,T;W^{1,r}(\Omega))$ for any $1 \leq  r < q$. The same one holds from $\bPhi_w^{-1}$.
\end{lemma}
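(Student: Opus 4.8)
The plan is to reduce the statement to the change-of-variables formula together with Hölder's inequality; the only genuinely delicate input is a list of structural properties of the flow map $\bPhi_w$ that can be read off from the explicit formula \eqref{eqs:bPhi_w}. Under the standing bound $\sup_{t}\normm{w(t)}_{H^2(\Gamma)}<\kappa$ (which, via the two-dimensional embedding $H^2(\Gamma)\hookrightarrow C(\Gamma)$, forces $a_{\partial\Omega}<m<w<M<b_{\partial\Omega}$ once $\kappa$ is small, so the construction of Section~\ref{sec:geometric-setting} is applicable) I would first record: (i) for each $t$, $\bPhi_w(t,\cdot)\colon\bbr^3\to\bbr^3$ is a bijection, equal to the identity outside the tubular neighbourhood, mapping $\Omega$ onto $\Omega^w(t)$, with inverse \eqref{eqs:bPhi_w^-1} of the same type; (ii) since $\Phi,\pi,\bn,\mathfrak d,f_\Gamma$ depend only on the fixed reference geometry and are at least $C^3$, and since $H^2(\Gamma)\hookrightarrow W^{1,s}(\Gamma)$ for every $s<\infty$ in dimension two, one has $\nabla\bPhi_w,\nabla\bPhi_w^{-1}\in L^\infty(0,T;L^s)$ with norm $\le C(\kappa,s)$ for all finite $s$, but \emph{not} for $s=\infty$; (iii) writing $\bPhi_w$ in Fermi coordinates around $\partial\Omega$ as $(x,z)\mapsto\Phi(x)+(z+f_\Gamma(z)w(t,x))\bn(\Phi(x))$, the only rough contribution to $\nabla\bPhi_w$ is a shear term proportional to $\bn\otimes\nabla_\Gamma w$, which is nilpotent and therefore drops out of the determinant; hence $J_w:=\det\nabla\bPhi_w$ (and $\det\nabla\bPhi_w^{-1}$) lies in $L^\infty$ with $0<c(\kappa)\le J_w\le C(\kappa)$ a.e., for $\kappa$ small. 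Point (iii) is the main obstacle: it is precisely the cancellation that keeps the change of variables non-degenerate despite the low spatial regularity of $w$, and it is what pins down the admissible loss $r<q$ rather than $r\le q$.

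Granting these facts, fix $t$. For $\bv\in L^q(\Omega^w(t))$ the substitution $y=\bPhi_w(t,x)$ gives $\normm{\bv\circ\bPhi_w(t,\cdot)}_{L^r(\Omega)}^r=\int_{\Omega^w(t)}\absm{\bv(y)}^r\,(J_w(\bPhi_w^{-1}(y)))^{-1}\,\d y\le\normm{J_w^{-1}}_{L^\infty}\absm{\Omega^w(t)}^{1-r/q}\normm{\bv}_{L^q(\Omega^w(t))}^r$ by Hölder (note $\absm{\Omega^w(t)}$ is bounded by $\kappa$), hence $\normm{\bv\circ\bPhi_w(t,\cdot)}_{L^r(\Omega)}\le C(\kappa)\normm{\bv}_{L^q(\Omega^w(t))}$ uniformly in $t$. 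For $\bv\in W^{1,q}(\Omega^w(t))$ I would extend $\bv$ to $\bbr^3$ (the Hölder‑graph domain $\Omega^w(t)$ is a uniform domain, so a $W^{1,q}$-extension exists), approximate by smooth functions, and pass to the limit using the bound just proved and property (ii); this yields the chain rule $\nabla(\bv\circ\bPhi_w)=((\nabla\bv)\circ\bPhi_w)\,\nabla\bPhi_w$ in $L^r(\Omega)$. Then, choosing $s<\infty$ with $\tfrac1s=\tfrac1r-\tfrac1q>0$, Hölder gives $\normm{\nabla(\bv\circ\bPhi_w)}_{L^r(\Omega)}\le\normm{(\nabla\bv)\circ\bPhi_w}_{L^q(\Omega)}\normm{\nabla\bPhi_w}_{L^s(\Omega)}\le C(\kappa)\normm{\nabla\bv}_{L^q(\Omega^w(t))}\cdot C(\kappa,s)$, the first factor being controlled by the same change-of-variables estimate and the second by (ii). Combining, $\normm{\bv\circ\bPhi_w(t,\cdot)}_{W^{1,r}(\Omega)}\le C(\kappa)\normm{\bv(t,\cdot)}_{W^{1,q}(\Omega^w(t))}$ with a constant independent of $t$.

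Finally I would pass to the time-dependent setting. Joint continuity of $(t,x)\mapsto\bPhi_w(t,x)$ (from $w\in C([0,T]\times\Gamma)$, which is implied by $w\in L^\infty_tH^2_\Gamma\cap W^{1,\infty}_tL^2_\Gamma$) together with $\bv\in L^1(Q_T^w)$ ensures $\bv\circ\bPhi_w$ is measurable on $\Omega\times(0,T)$, and raising the pointwise-in-$t$ estimates to the power $p$ and integrating over $(0,T)$ (or taking the essential supremum when $p=\infty$) gives continuity from $L^p(0,T;L^q(\Omega^w(t)))$ to $L^p(0,T;L^r(\Omega))$ and from $L^p(0,T;W^{1,q}(\Omega^w(t)))$ to $L^p(0,T;W^{1,r}(\Omega))$, with operator norm depending only on $\kappa$, $p$, $q$, $r$. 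The statement for $\bPhi_w^{-1}$ follows verbatim with the roles of $\Omega$ and $\Omega^w(t)$ interchanged, since \eqref{eqs:bPhi_w^-1} has the same structure and regularity as \eqref{eqs:bPhi_w} and its Jacobian is likewise in $L^\infty$ and bounded below; linearity of $\bv\mapsto\bv\circ\bPhi_w$ turns these bounds into the asserted continuity.
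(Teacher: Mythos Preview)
The paper does not prove this lemma itself; it simply refers to \cite[Lemma~2.2]{KMN2023}. Your argument is correct and is precisely the standard one: change of variables with bounded Jacobian for the $L^q\!\to\!L^r$ part, then chain rule plus H\"older against $\nabla\bPhi_w\in L^\infty_t L^s_x$ (every finite $s$) for the $W^{1,q}\!\to\!W^{1,r}$ part, the latter being where the strict loss $r<q$ is genuinely needed. Your structural remark (iii)---that in Fermi coordinates the map reads $(y,z)\mapsto(y,\,z+f_\Gamma(z)w(y))$, so the rough $\nabla_\Gamma w$ contribution sits in an off-diagonal (nilpotent) block and the determinant is $1+f_\Gamma'(z)w(y)$, hence $J_w^{\pm1}\in L^\infty$---is in fact sharper than what the paper records: the remark after \eqref{eqs:nabla-Phi_w-1} only claims $J_w\in L^p$ for $p<\infty$ via Hadamard's inequality. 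One small technical point: for the chain-rule step it is cleaner to appeal to density of smooth functions in $W^{1,q}(\Omega^w(t))$ via the segment property rather than to a $W^{1,q}$-extension operator, since the existence of the latter on merely H\"older (sub-Lipschitz) domains is not routine.
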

\begin{proof}
    For the proof, see \cite[Lemma 2.2]{KMN2023}.
\end{proof}

Concerning the trace on $ \Gamma $ with respect to $ \bPhi_w $, we have the following Lemma, which is referred to as \cite[Lemma 2.3]{KMN2023}.
\begin{lemma}
	\label{lem:trace-Phi_w}
	Let $ 1 \leq p \leq \infty $, $1 < q \leq \infty$, and $ w \in L_t^\infty H_\Gamma^2 \cap W_t^{1,\infty} L_\Gamma^2 $ with $ \norm{w}_{L_t^\infty H_\Gamma^2 \cap W_t^{1,\infty} L_\Gamma^2} < \kappa $. Then the linear mapping $ \tr_{\Gamma^w} : \bv \mapsto \rvm{(\bv \circ \bPhi_w)}_{\partial \Omega} $ is well defined and continuous from $ L^p(0,T;W^{1,q}(\Omega^w(t))) $ to $ L^p(0,T;L^r(\Gamma)) $ for all $r \in (1, \frac{2q}{3-q})$, respectively from $L^p(0,T;W^{1,q}(\Omega^w(t)))$ to $L^p(0,T;W^{1-\frac{1}{r},r}(\Gamma))$ for any $1 \leq r < q$.
\end{lemma}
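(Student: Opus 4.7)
The plan is to reduce the claim to the standard trace theorem on the fixed smooth reference domain $\Omega$ via pull-back along the flow map $\bPhi_w$, combined with Lemma~\ref{lem:embedding}.

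First, under the smallness assumption on $w$, Lemma~\ref{lem:embedding} provides, for every $1 \leq s < q$, a continuous linear mapping $\bv \mapsto \bv \circ \bPhi_w$ from $L^p(0,T;W^{1,q}(\Omega^w(t)))$ into $L^p(0,T;W^{1,s}(\Omega))$, with operator norm depending only on $\kappa$. Because $\partial \Omega$ is of class $C^4$, the classical trace theorem on the fixed reference domain yields a continuous linear operator $W^{1,s}(\Omega) \to W^{1-\frac{1}{s},s}(\partial\Omega)$, and composition with the smooth parametrization $\Phi:\Gamma \to \partial\Omega$ identifies $W^{1-\frac{1}{s},s}(\partial\Omega)$ with $W^{1-\frac{1}{s},s}(\Gamma)$ via equivalent norms.

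For the second assertion, given $1 \leq r < q$, I would choose $s = r$ in the above chain and integrate the resulting pointwise estimate over $t \in (0,T)$ in the $L^p$ sense; this directly delivers continuity of $\tr_{\Gamma^w}$ from $L^p(0,T;W^{1,q}(\Omega^w(t)))$ into $L^p(0,T;W^{1-\frac{1}{r},r}(\Gamma))$.

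For the first assertion, I would further combine the chain with the fractional Sobolev embedding on the two-dimensional surface $\partial\Omega$: for $1 < s < 3$ one has $W^{1-\frac{1}{s},s}(\partial\Omega) \hookrightarrow L^r(\partial\Omega)$ whenever $r \leq \frac{2s}{3-s}$, and for every finite $r$ when $s \geq 3$. Given any target $r \in (1, \frac{2q}{3-q})$, the map $s \mapsto \frac{2s}{3-s}$ is continuous and strictly increasing on $(1,3)$, so I would pick $s < q$ close enough to $q$ so that still $r < \frac{2s}{3-s}$ (or just $s \geq 3$ when $q \geq 3$), after which integration over $t \in (0,T)$ in $L^p$ finishes the argument.

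The main obstacle is the bookkeeping around the strict loss $r < q$: it is inherited from Lemma~\ref{lem:embedding} and reflects the fact that $\nabla \bPhi_w$ belongs to $L^\infty_t L^p_\Gamma$ for every finite $p$ via the embedding $H^2(\Gamma) \hookrightarrow W^{1,p}(\Gamma)$ in two dimensions, but not to $L^\infty_t L^\infty_\Gamma$. Everything else is a standard trace/embedding chain on a smooth reference boundary together with straightforward integration in time; no further regularity of $w$ beyond the stated hypothesis is needed.
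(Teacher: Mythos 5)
Your argument is correct. The paper itself does not supply a proof of this lemma but refers the reader to Kalousek--Mitra--Ne\v{c}asov\'a, so your proposal actually fills in the argument the paper omits. The chain you use is the natural one: the composition lemma (Lemma~\ref{lem:embedding}) to land in $L^p(0,T;W^{1,s}(\Omega))$ for any $s<q$ on the fixed smooth reference domain, the classical trace theorem $W^{1,s}(\Omega)\to W^{1-1/s,s}(\partial\Omega)$, the $C^4$ chart $\Phi$ to transfer to $\Gamma$, and finally either taking $s=r$ for the fractional-Sobolev version of the conclusion or invoking $W^{1-1/s,s}(\partial\Omega)\hookrightarrow L^r(\partial\Omega)$ for $r\le \tfrac{2s}{3-s}$ (with the critical-index case $s\ge 3$ handled separately) and letting $s\uparrow q$ to sweep out all $r<\tfrac{2q}{3-q}$. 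Your closing remark correctly identifies the source of the strict loss $r<q$: $\nabla\bPhi_w$ lies in $L^\infty_tL^p$ for every finite $p$ but not in $L^\infty_tL^\infty$, because $H^2(\Gamma)$ on a two-dimensional $\Gamma$ embeds into $W^{1,p}$ for all finite $p$ only. One small point worth being explicit about if you were to write this up fully: for the second assertion at $r=1$ you are applying the trace map $W^{1,1}(\Omega)\to L^1(\partial\Omega)=W^{0,1}(\partial\Omega)$, which is still valid but sits at the edge of the usual $1<s<\infty$ formulation of the fractional trace theorem.
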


Now we are into the regularity of the map. Recall that we have {\textit{a priori}}
\begin{equation*}
	w \in L^\infty(0,T; H^2(\Gamma)) \cap W^{1,\infty}(0,T; L^2(\Gamma)).
\end{equation*}
Then as explained above, $ \bPhi_w $ is endowed with the same regularity of $ w $, i.e.,
\begin{equation*}
	\bPhi_w \in L^\infty(0,T; H^2(\Omega)) \cap W^{1,\infty}(0,T; L^2(\Omega)),
\end{equation*}
and for a.e. $ t \in (0,T) $,
\begin{equation}
	\label{eqs:estimate-Phi}
	\norm{\bPhi_w(t)}_{H_x^2} + \norm{\bPhi_w(t)}_{L_x^2}
	\leq C \norm{w(t)}_X,
\end{equation}
where $ \normm{w(t)}_X \coloneqq \normm{\pt w(t)}_{L_\Gamma^2} + \normm{\Delta w(t)}_{L_\Gamma^2} $. Since $ H^1(\Gamma) \hookrightarrow L^p(\Gamma) $ for all $1 \leq p < \infty$ by the two dimensional Sobolev embedding, we know that
\begin{equation}
	\label{eqs:nabla-Phi_w-1}
	\norm{\nabla \bPhi_w(t)}_{L_x^p} + \norm{J_w(t)}_{L_x^p} + \norm{\cA_w(t)}_{L_x^p}
	\leq C \norm{w(t)}_X \text{ for all } 1 \leq p < \infty,
\end{equation}
as well as $ \inv{\bPhi_w}, \pt \inv{\bPhi_w}, \inv{J_w} $, possessing the same regularity classes and estimates in $ \Omega^w(t) $. 

If $ \nu_s > 0 $, one has $ \pt w \in L^2(0,T; H^1(\Gamma)) \hookrightarrow L^2(0,T; L^p(\Gamma)) $. Then
\begin{equation*}
    \pt \bPhi_w \in L^2(0,T; L^p(\Omega)) \text{ for all } 1 \leq p < \infty
\end{equation*}
and for a.e. $ t \in (0,T) $,
\begin{equation}
    \label{eqs:pt-Phi_w}
    \int_0^t \norm{\pt \bPhi_w(s)}_{L_x^p}^2 \,\d s \leq C \int_0^t \norm{\nabla \pt w(s)}_{L_\Gamma^p}^2 \,\d s
    \leq C(\nu_s^{-1},w) \text{ for all } 1 \leq p < \infty.
\end{equation}
Here for $ J_w $, one can employ the Hadamard's inequality (see e.g. \cite[Equation (A.2.23)]{KR2019})
\begin{equation*}
	\abs{\det \mathbf{B}} \leq d^{d/2} \abs{\mathbf{B}}^d
\end{equation*} 
for the determinant of a bounded matrix $ \mathbf{B} \in \bbr^{d \times d} $ in the sense of Frobenius.

\section{Existence of Weak Solutions}
\label{sec:weak}
In this section, we are going to investigate the existence of weak solutions to \eqref{eqs:FSI-Model}. Namely, we will prove Theorem \ref{thm:weak} following the scheme developed in \cite{MMNRT2022}. More precisely, rough ideas for the proof will be presented first in Section \ref{sec:ideas-proof}, while the approximate (extended) system and uniform estimates are shown in Sections \ref{sec:extended-problem} and \ref{sec:uniform-estimates} respectively. Then the proof of existence is demonstrated step-by-step in Section \ref{sec:weak-ext}. 

Before the main argument, let us first give the necessary assumption for the initial data.
\begin{assumption}
    The initial data $ (\vr_0, \bu_0, w_0, w_{0,1}) $ is supposed to fulfill
	\begin{equation}
		\label{eqs:initial}
		\begin{gathered}
			\vr_0 \in L^\gamma(\Omega^{w_0}), \quad \vr_0 \geq 0, \quad \vr_0 \not\equiv 0, \quad {\vr_0}_{|\bbr^3 \backslash \Omega^{w_0}} = 0, \\
			\vr_0 > 0 \text{ in } \{x \in \Omega^{w_0}: \mathbf{m}_0(x) > 0 \}, \quad \frac{\mathbf{m}_0^2}{\vr_0} \in L^1(\Omega^{w_0}), \\
			w_0 \in H^2(\Gamma), \quad w_{0,1} \in L^2(\Gamma), \\
			E_0 \coloneqq \int_{\Omega^{w_0}} \left( \frac{1}{2\vr_0} \abs{\mathbf{m}_0}^2 + \frac{p(\vr_0)}{\gamma - 1} \right) \dx
			+ \int_\Gamma \left( \onehalf \abs{w_{0,1}}^2 + \onehalf \abs{\Delta w_0}^2 \right) \dH^2 < \infty,
		\end{gathered}
	\end{equation}
\end{assumption}
Now we record the definition of \textit{bounded energy} weak solutions to \eqref{eqs:FSI-Model}.
\begin{definition}
	\label{def:bounded-weak}
	We say that $ (\vr, \bu, w) $ is a \textbf{(bounded energy) weak solution} to \eqref{eqs:FSI-Model} with initial data $ (\vr_0, \mathbf{m}_0, w_0, w_{0,1}) $,
	if the following conditions hold:
	\begin{enumerate}
		\item $ (\vr, \bu, w) $ satisfies 
		\begin{gather}
			\label{eqs:weak-rho}
			\vr \geq 0, \  \vr \in C_w([0,T]; L^\gamma (\bbr^3)) \cap L^q_{loc}([0,T] \times \Omega^w(t)), \\
			\label{eqs:weak-u}
			\bu \in L^2(0,T; W^{1,q}(\Omega^w(t))), \  q < 2, \quad
			\vr \abs{\bu}^2 \in L^\infty(0,T; L^1(\bbr^3)), \\
			\label{eqs:weak-w}
			w \in L^\infty(0,T; H^2(\Gamma)) \cap H^1(0,T; H^1(\Gamma)), \quad \pt w \in L^\infty(0,T; L^2(\Gamma)).
		\end{gather}
		\item The coupling condition $ (\pt w \bn) \cdot (\bn^w \circ \bPhi_w) = \mathrm{tr}_{\Gamma^w} \bu \cdot (\bn^w \circ \bPhi_w) $ {holds} on $ \Gamma_T $, where the trace operator $ \tr_{\Gamma^w} $ is defined in Lemma \ref{lem:trace-Phi_w}.
		\item The renormalized continuity equation
		\begin{equation}
			\label{eqs:weak-renormal}
			\begin{aligned}
				\int_{Q^w_t} \big( b(\vr) (\pt \vphi + \bu \cdot \nabla \vphi) 
				+ ((b(\vr) & - b'(\vr) \vr) \Div \bu \vphi) \big) \dxdtau \\
				& = \int_{\Omega^{w}(t)} b(\vr) \vphi (t, \cdot) \dx - \int_{\Omega^{w_0}} b(\vr_0) \vphi (0, \cdot) \dx,
			\end{aligned}
		\end{equation}
		holds for all $t \in [0,T]$ and $ \vphi \in C^\infty([0,T] \times \bbr^3) $, and any $ b(\cdot) \in C^1[0,\infty) $, $ b(0) = 0 $, $ b'(r) = 0 $ for large $ r $.
		\item The coupled momentum equation is satisfied in the sense of
		\begin{equation}
			\label{eqs:weak-momentum}
			\begin{aligned}
				& \int_{Q^w_t} \Big( 
                    \vr \bu \cdot \pt \bphi 
    			+ (\vr \bu \otimes \bu) : \nabla \bphi
    			+ p(\vr) \Div \bphi - \bbs(\nabla \bu) : \nabla \bphi
                \Big) \dxdtau \\
				& \qquad + \int_{\Gamma_t} \big(
                \pt w \pt \psi - \Delta w \Delta \psi - \nu_s \nabla \pt w \cdot \nabla \psi
                \big) \dH^2 \d t \\
				& \qquad + \alpha \int_{\Gamma^w_T}  \big( (\bu - \pt w \bn \circ \inv{\bPhi_w})_{\btau^w} \big) \cdot \big( (\bphi - \psi \bn \circ \inv{\bPhi_w})_{\btau^w} \big) \dH^2 \d t \\
				& = \int_{\Omega^{w}(t)} (\vr \bu) \cdot \bphi(t, \cdot) \dx + \int_{\Gamma} \pt w \psi(t, \cdot) \dH^2 \\
                & \qquad - \int_{\Omega^{w_0}} \mathbf{m}_0 \cdot \bphi(0, \cdot) \dx - \int_{\Gamma} w_{0,1} \psi(0, \cdot) \dH^2,
			\end{aligned}
		\end{equation}
		for all $t \in [0,T]$, $ \bphi \in C^\infty(\Bar{Q_T^w}) $ and $ \psi \in C^\infty(\Bar{\Gamma_T}) $ satisfying $ (\psi \bn) \cdot (\bn^w \circ \bPhi_w) = \tr_{\Gamma^w} \bphi \cdot (\bn^w \circ \bPhi_w) $ on $ \Gamma_T $. 
		\item The energy inequality
		\begin{equation}
			\label{eqs:weak-energy-inequality}
			\begin{aligned}
				& \int_{\Omega^w(t)} \left( \onehalf \vr \abs{\bu}^2 + \frac{p(\vr)}{\gamma - 1} \right)(t) \dx
				+ \int_{\Gamma} \left( \onehalf \abs{\pt w}^2 + \onehalf \abs{\Delta w}^2 \right)(t) \dH^2 \\
				& \qquad + \int_{0}^{t} \int_{\Omega^w(\tau)} \bbs(\nabla \bu) : \nabla \bu \dxdtau \\
				& \qquad + \alpha \int_{0}^{t} \int_{\Gamma^w(\tau)} \abs{(\bu - \pt w \bn \circ \inv{\bPhi_w})_{\btau^w}}^2 \dH^2 \d \tau 
				+ \nu_s \int_{0}^{t} \int_{\Gamma} \abs{\nabla \pt w}^2 \dH^2 \d \tau \\
				& \qquad \quad \leq \int_{\Omega^{w_0}} \left( \frac{1}{2 \vr_0} \abs{\mathbf{m}_0}^2 + \frac{p(\vr_0)}{\gamma - 1} \right) \dx
				+ \int_{\Gamma} \left( \onehalf \abs{w_{0,1}}^2 + \onehalf \abs{\Delta w_0}^2 \right) \dH^2,
			\end{aligned}
		\end{equation}
		holds for all $ t \in [0, T] $. 
	\end{enumerate}
\end{definition}

\begin{remark}
    Here \eqref{eqs:weak-momentum} and \eqref{eqs:weak-energy-inequality} are based on the case $\Omega \in \bbone$. If the reference geometry is flat, there will be no essential difference, except for the boundary data on $\Gamma_0$, namely, we would additionally have $\tr_{\Gamma_0} \bphi \cdot \be_3 = 0$, and
    \begin{equation*}
        \alpha_0 \int_{\Gamma_T^0} \bu_{\btau^0} \cdot \bphi_{\btau^0} \dH^2 \dt \text{ and }
        \alpha_0 \int_0^t \int_{\Gamma_0} \abs{\bu_{\btau^0}}^2 \dH^2 \dtau    
    \end{equation*}
    will appear on the left-hand side of \eqref{eqs:weak-momentum} and \eqref{eqs:weak-energy-inequality}, respectively.
\end{remark}

\begin{remark}
    In the weak formulation \eqref{eqs:weak-momentum}, one assumes a pair of test functions $(\bphi, \psi) \in C^\infty(\Bar{Q_T^w}) \times C^\infty(\Bar{\Gamma_T}) $ satisfying $ (\psi \bn) \cdot (\bn^w \circ \bPhi_w) = \tr_{\Gamma^w} \bphi \cdot (\bn^w \circ \bPhi_w) $ on $ \Gamma_T $. The choice of smooth test functions is compatible with the continuity of normal velocity in the presence of $\bn^w$, due to the Navier-slip boundary condition. We comment that if one considers the no-slip boundary, i.e., continuity of velocities $ \psi \bn = \tr_{\Gamma^w} \bphi $, the regularity class of $(\bphi, \psi)$ are not matched due to the transformation $\bPhi_w$. The same holds for the flat reference domain case as $\bn \cdot (\bn^w \circ \bPhi_w) = 1$, i.e., $ \psi = \tr_{\Gamma^w} \bphi \cdot (\bn^w \circ \bPhi_w) $.
    Instead, we should consider $(\bphi, \psi) \in C^\infty(\Bar{Q_T^w}) \times (L^\infty(0,T; H^2(\Gamma)) \cap W^{1,\infty}(0,T; L^2(\Gamma))) $. See e.g. Kalousek--Mitra--Ne\v{c}asov\'{a} \cite[Definition 1.1]{KMN2023} for the multicomponent fluid-structure interaction problem with no-slip boundary conditions. 
\end{remark}


\subsection{Rough ideas of the proof}
\label{sec:ideas-proof}
To show Theorem \ref{thm:weak}, we will follow the ideas and techniques introduced in \cite{MMNRT2022}, where a Navier--Stokes--Fourier system coupled with a damping shell and heat exchange was considered. The strategy there is robust and can be adapted to our problem, with the corresponding weak solution argument of compressible barotropic Navier--Stokes equation \cite{NS2004}. To avoid lengthy repetitions of the arguments analogous to \cite{KMN2023,MMNRT2022} and make the manuscript a reasonable size, in the following we will conclude the main steps and emphasize the main points coming from the compressible Navier--Stokes equation and the Navier--slip boundary condition. The sketch of the proof reads as follows:
\begin{enumerate}
    \item \textbf{Existence of approximate solutions}. 
    In Section \ref{sec:extended-problem}, we extend the fluid domain $\Omega^w(t)$ to a large domain $ B $ such that $ \Bar{\Omega^w(t)} \subset B \coloneqq \{\abs{x} < 2R\} $ for any fixed $t$ and $R$, while an approximation parameter $ \kappa > 0 $ is employed to extend the viscosities $ \mu $, $ \lambda $ to $ \mu_\kappa $, $ \lambda_\kappa $ satisfying $ \mu_\kappa, \lambda_\kappa \in C_c^\infty([0,T] \times \bbr^3) $, $ \mu_\kappa(t, \cdot ) _{|\Omega^w(t)} = \mu $, $ \lambda_\kappa(t, \cdot ) _{|\Omega^w(t)} = \lambda $ and $ \mu_\kappa, \lambda_\kappa \rightarrow 0 $ as $ \kappa \rightarrow 0 $. Moreover, as the usual techniques in the framework of compressible Navier--Stokes equation, see e.g. \cite{FN2017,NS2004}, the artificial (regularized) pressure $ \delta \vr^\beta $ with $ \delta, \beta > 0 $ is employed to approximate the pressure to get higher integrability of the density $ \vr $. The main goal here is to prove the existence of weak solutions to the approximate system, see Theorem \ref{thm:weak-ext}. The proof will {rely} on a combination of a time discretization and a operator splitting method, where one solves the subproblems respectively by standard methods, see \cite{KMN2023,MMNRT2022,MC2013,MC2016}.
    Note that the boundary is missing in the fluid subproblem. So we penalize the kinematic boundary condtion as 
    \begin{equation*}
        \frac{1}{\kappa} \int_0^T \int_{\Gamma^w(t)} \big( (\bu - \pt w \bn \circ \inv{\bPhi_w}) \cdot \bn^w \big) \big( (\bphi - \psi \bn \circ \inv{\bPhi_w}) \cdot \bn^w \big) \dH^2 \dt,
    \end{equation*}
    for $ \kappa > 0 $, initiated by \cite{FKNNS2013}. Further, we apply {a fundamental lemma} which roughly says that if the initial density is extended by zero to a large domain $B\setminus \Omega^{w_0}$, also the density is zero on $B \setminus \Omega^w(t)$ in the case of the smooth domain and sufficiently regular density, see \cite{KMN2023,MMNRT2022}.
    \item \textbf{Uniform bounds and equi-integrability}. Once solving the approximate system, one may derive the uniform both in $ \kappa $ and $ \delta $ estimates for $ (\vr^\delta, \bu^\delta, w^\delta) $ (only labeled with $ \delta $ for simplicity). Depending on the presence of structure damping, i.e., $ \nu_s > 0 $ or $ \nu_s = 0 $, {we derive uniform estimates with different restrictions}. In particular, the equi-integrability of the density $ \vr^\delta $ in this case replies strongly on the adiabatic exponent, due to the \textit{Navier-slip boundary condition}, cf. Lemma \ref{lem:equi-integrability-with} and Remark \ref{rem:equi-integrability}.
	\item \textbf{Passing to the limit as $ \kappa, \delta \rightarrow 0 $}.
	Based on the uniform (in $ \delta $ and $ \kappa $) bounds, we are hence able to extract subsequences to justify the limit functions, as $ \kappa = \delta \rightarrow 0 $, see Section \ref{sec:weak-ext}, which finally completes the proof of Theorem \ref{thm:weak}. Note that again one shall distinguish different reference geometries and elastic damping $ \nu_s \geq 0 $, to recover boundary data, cf. Proposition \ref{prop:more-spatial-regularity} and Section \ref{sec:recover-boundary}, resulting from the presence of the \textit{Navier-slip boundary condition}.
\end{enumerate} 

In the following, we will discuss some necessary details about each step above. For the full argument, we refer to \cite{KMN2023,MMNRT2022}.

\subsection{Extended problem with artificial pressure}
\label{sec:extended-problem}
Following the extension argument in \cite{FKNNS2013,KMN2023,MMNRT2022}, for a fixed $0 < \kappa \ll 1$ we approximate the viscosity coefficients $\mu, \lambda$ by
\begin{equation*}
    \mu_\kappa \coloneqq \chi_\kappa \mu, \quad 
    \lambda_\kappa \coloneqq \chi_\kappa \lambda,
\end{equation*}
where the function $\chi_\kappa \in C_c^\infty(\bbr^3 \times [0,T]; [0,1])$ fulfills
\begin{gather*}
    0 < \kappa < \chi_\kappa \leq 1 \text{ in } B \times [0,T], \\
    {\chi_\kappa}_{|\Omega^w(t)} = 1 \text{ for all } t \in [0,T], \\
    \norm{\chi_\kappa}_{L^p((B \times (0,T)) \setminus Q_T^w)} \leq C \kappa \text{ for some } p \geq 1, \\
    \text{the mapping } w \mapsto \chi_\kappa \text{ is Lipschitz}.
\end{gather*}

Moreover, for $\delta > 0$ we introduce an approximate pressure $p_\delta(\vr)$ with an artificial pressure $\delta \vr^\beta$,
\begin{equation*}
    p_\delta(\vr) = \vr^\gamma + \delta \vr^\beta.
\end{equation*}
The initial data $\vr_0$ and $\mathbf{m}_0$ are extended to $\vr_{0,\delta}$, $(\vr \bu)_{0,\delta}$ in $B$ such that
\begin{equation}
    \label{eqs:initial-reg}
    \begin{gathered}
        \vr_{0,\delta} \geq 0, \ 
        \vr_{0,\delta} \not\equiv 0, \ 
        {\vr_{0,\delta}}_{|\bbr^3 \setminus \Omega^{w_0}} = 0, \ 
        \int_B \left(\vr_{0,\delta}^\gamma + \delta \vr_{0,\delta}^\beta\right) \dx \leq C, \\
        \vr_{0,\delta} \to \vr_0 \text{ in } L^{\gamma} (\Omega^{w_0}), \ 
        (\vr \bu)_{0,\delta} \to \mathbf{m}_0 \text{ in } L^1(\Omega^{w_0}), \text{ as } \delta \to 0, \\
        \int_B \frac{1}{\vr_{0,\delta}} \abs{(\vr \bu)_{0,\delta}}^2 
        \dx \leq C, \ 
        \int_B \frac{\abs{(\vr \bu)_{0,\delta}}^2}{\vr_{0,\delta}} \dx 
        \to \int_{\Omega^{w_0}} \frac{\abs{\mathbf{m}_0}^2}{\vr_0} \dx, \text{ as } \delta \to 0.
    \end{gathered}
\end{equation}

Now we give the definition of weak solutions to the extended problem with pressure regularization as follows.
\begin{definition}
    \label{def:weak-extended}
    We say that $ (\vr, \bu, w) $ is a weak solution to the extended compressible Navier--Stokes equations interacting with a viscoelastic structure with the Navier-slip boundary in $B$
	if the following conditions hold:
	\begin{enumerate}
		\item $ (\vr, \bu, w) $ satisfies
		\begin{gather}
			\label{eqs:weak-rho-ext}
			\vr \geq 0, \text{ a.e. in } B, \  \vr \in L^{\infty}(0,T; L^\beta (B)), \ p_\delta(\vr) \in L^1((0,T) \times B), \\
			\label{eqs:weak-u-ext}
			\bu \in L^2(0,T; H_0^1(B)), \  
			\vr \abs{\bu}^2 \in L^\infty(0,T; L^1(B)), \\
			\label{eqs:weak-w-ext}
			w \in L^\infty(0,T; H^2(\Gamma)) \cap H^1(0,T; H^1(\Gamma)), \ \pt w \in L^\infty(0,T; L^2(\Gamma)).
		\end{gather}
		\item The renormalized continuity equation
		\begin{equation}
			\label{eqs:weak-renormal-ext}
			\begin{aligned}
				\int_0^t \int_B \vr \cB (\vr)\vr (\pt \vphi & + \bu \cdot \nabla \vphi) \dxdtau 
				- \int_0^t \int_B b (\vr) \Div \bu \vphi (t,\cdot) \dxdtau\\
				& = \int_B \vr(t,\cdot) \cB (\vr(t,\cdot)) \vphi (t,\cdot) \dx - \int_B \vr_{0,\delta} \cB (\vr_{0,\delta}) \vphi (0,\cdot) \dx,
			\end{aligned}
		\end{equation}
		holds for all $t \in [0,T]$, $ \vphi \in C^\infty([0,T] \times \bbr^3) $, and $b \in L^\infty(0,\infty) \cap C[0,\infty)$ such that $b(0)=0$ with $\cB(\vr) = \cB(1) + \int_1^\vr \frac{b(z)}{z^2} \mathrm{d}z$.
		\item The coupled momentum equation is satisfied in the sense of
		\begin{align}
            \nonumber
            & \int_0^t \int_B \Big(
                \vr \bu \cdot \pt \bphi 
				+ (\vr \bu \otimes \bu) : \nabla \bphi
				+ p_\delta(\vr) \Div \bphi - \bbs_\kappa(\nabla \bu) : \nabla \bphi \Big) \dxdtau \\
				\nonumber
		  & 
            \qquad + \int_0^t \int_\Gamma \big(
                \pt w \pt \psi - \Delta w \Delta \psi - \nu_s \nabla \pt w \cdot \nabla \psi
                \big) \dH^2 \d \tau
                + \alpha \int_0^t \int_{\Gamma^w(\tau)} \bu_{\btau^w} \cdot \bphi_{\btau^w} \dH^2 \d \tau \\
                \label{eqs:weak-momentum-ext}
           & \qquad + \frac{1}{\kappa} \int_0^T \int_{\Gamma^w(\tau)} \big( (\bu - \pt w \bn \circ \inv{\bPhi_w}) \cdot \bn^w \big) \big( (\bphi - \psi \bn \circ \inv{\bPhi_w}) \cdot \bn^w \big) \dH^2 \dtau \\
		  \nonumber
            & = \int_B (\vr \bu) \cdot \bphi(t, \cdot) \dx + \int_{\Gamma} \pt w \psi(t, \cdot) \dH^2 
            - \int_B (\vr \bu)_{0,\delta} \cdot \bphi(0, \cdot) \dx - \int_{\Gamma} w_{0,1} \psi(0, \cdot) \dH^2,
		\end{align}
		for all $t \in [0,T]$ and $ \bphi \in C^\infty([0,T] \times \bbr^3) $ and $ \psi \in C^\infty([0,T] \times \Gamma) $ satisfying $ (\psi \bn) \cdot (\bn^w \circ \bPhi_w) = \tr_{\Gamma^w} \bphi \cdot (\bn^w \circ \bPhi_w) $ on $ \Gamma_T $. Here the tensor $\bbs_\kappa$ is defined by $ \bbs_\kappa(\nabla \bu) = \mu_\kappa(\nabla \bu + \nabla \tran{\bu}) + \lambda_\kappa \Div \bu \bbi $.
		\item The energy inequality
		\begin{align}
			\nonumber
			& \int_B \left( \onehalf \vr \abs{\bu}^2 + \frac{p_\delta(\vr)}{\gamma - 1} \right)(t) \dx
			+ \int_{\Gamma} \left( \onehalf \abs{\pt w}^2 + \onehalf \abs{\Delta w}^2 \right)(t) \, \d \mathcal{H}^2 \\
			\nonumber
			& \qquad + \int_{0}^{t} \int_B \bbs_\kappa(\nabla \bu) : \nabla \bu \dxdtau
			+ \frac{1}{\kappa} \int_0^T \int_{\Gamma^w(\tau)} \abs{(\bu - \pt w \bn \circ \inv{\bPhi_w}) \cdot \bn^w}^2 \dH^2 \dtau \\
			\nonumber
			& \qquad + \alpha \int_{0}^{t} \int_{\Gamma^w(\tau)} \abs{(\bu - \pt w \bn \circ \inv{\bPhi_w})_{\btau^w}}^2 \dH^2 \d \tau
			+ \nu_s \int_{0}^{t} \int_{\Gamma} \abs{\nabla \pt w}^2 \dH^2 \d \tau \\
			\label{eqs:weak-energy-inequality-ext}
			& \qquad \quad \leq \int_{\Omega^{w_0}} \left( \frac{1}{2 \vr_{0,\delta}} \abs{(\vr \bu)_{0,\delta}}^2 + \frac{p_\delta(\vr_{0,\delta})}{\gamma - 1} \right) \dx
			+ \int_{\Gamma} \left( \onehalf \abs{w_{0,1}}^2 + \onehalf \abs{\Delta w_0}^2 \right) \dH^2,
		\end{align}
		holds for a.e. $ t \in [0, T] $. 
	\end{enumerate}
\end{definition}

The aim is to prove the existence of the approximate solutions.
\begin{theorem}
    \label{thm:weak-ext}
    Assume that the initial data satisfies \eqref{eqs:initial} and \eqref{eqs:initial-reg}. Then there exists $ T > 0 $ and a weak solution $(\vr, \bu, w)$ to the approximate system in the sense of Definition \ref{def:weak-extended}.
\end{theorem}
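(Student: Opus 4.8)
The plan is to build the approximate solution by a semi-discretization in time combined with a Lie operator-splitting, following Muha--\v{C}ani\'c \cite{MC2013,MC2016} and M\'{a}cha--Muha--Ne\v{c}asov\'{a}--Roy--Trifunovi\'c \cite{MMNRT2022} and adapting the scheme to the Navier-slip coupling. Fix $\kappa,\delta>0$ and a time step $h=T/N$, $N\in\bbn$. On each subinterval $[t_n,t_{n+1}]$ one alternates (i) a \emph{structure} step, in which the fluid traction from $t_n$ is frozen and the damped plate equation $\pt^2 w+\Delta^2 w-\nu_s\Delta\pt w=\mathcal{F}^w$ is advanced on $\Gamma$ --- a linear evolution problem solved by a Faedo--Galerkin/Lax--Milgram argument, producing $w\in H^2(\Gamma)$, $\pt w\in H^1(\Gamma)$ with the natural discrete energy balance; and (ii) a \emph{fluid} step on the fixed ball $B$, with geometry frozen by a time interpolant of $w^n$, solving the regularized compressible Navier--Stokes system with extended viscosity tensor $\bbs_\kappa$, artificial pressure $p_\delta(\vr)=\vr^\gamma+\delta\vr^\beta$, and the kinematic coupling on the interior interface $\Gamma^w$ imposed via the penalty functional with coefficient $1/\kappa$ of \S\ref{sec:ideas-proof}. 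The fluid step on a fixed domain is handled by the now-classical Lions--Feireisl--Novotn\'y--Petzeltov\'a machinery (Faedo--Galerkin in $\bu$ over a basis of $H^2_0(B)$, vanishing-viscosity regularization of the continuity equation, successive limit passages), see \cite{FN2017,NS2004}; the penalty and the tangential friction terms are lower order and pose no new difficulty at fixed $\kappa$.

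The two subproblems are set up so that the sum of their energy identities telescopes into a discrete analogue of \eqref{eqs:weak-energy-inequality-ext} with constants independent of $h$. This furnishes the uniform bounds: $\vr^{(h)}$ bounded in $L^\infty(0,T;L^\beta(B))$, $\sqrt{\vr^{(h)}}\bu^{(h)}$ in $L^\infty(0,T;L^2(B))$, $\bu^{(h)}$ in $L^2(0,T;H^2_0(B))$ (the top-order regularity being built into the Galerkin construction, only its $L^2(0,T;H^1(B))$ norm being controlled uniformly in $\kappa,\delta$), $w^{(h)}$ in $L^\infty(0,T;H^2(\Gamma))$, $\pt w^{(h)}$ in $L^\infty(0,T;L^2(\Gamma))\cap L^2(0,T;H^1(\Gamma))$, plus a bound on the penalty functional quantifying the defect in the kinematic identity. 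At this stage one invokes the \emph{fundamental lemma}: since at fixed $h$ the piecewise-in-time geometry is smooth and the density sufficiently regular, extending $\vr_{0,\delta}$ by zero to $B\setminus\Omega^{w_0}$ forces $\vr^{(h)}\equiv0$ on $B\setminus\Omega^{w}(t)$, by testing the renormalized continuity equation as in \cite{KMN2023,MMNRT2022}.

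Then one lets $h\to0$. Aubin--Lions applied to the structure gives $w^{(h)}\to w$ in $C([0,T];H^{s}(\Gamma))$ for every $s<2$ and $\pt w^{(h)}\to\pt w$ in $L^2(0,T;L^2(\Gamma))$, so the flow maps $\bPhi_{w^{(h)}}$ converge with the regularity of \S\ref{sec:geometric-setting} and the moving geometry stabilizes. For the fluid one combines the momentum bound with the continuity equation to get $\vr^{(h)}\to\vr$ in $C_w([0,T];L^\beta(B))$ and $\vr^{(h)}\bu^{(h)}\to\vr\bu$, $\vr^{(h)}\bu^{(h)}\otimes\bu^{(h)}\to\vr\bu\otimes\bu$; an Aubin--Lions argument in the bulk yields $\bu^{(h)}\to\bu$ in $L^2(0,T;L^2(B))$, which together with the trace estimate of Lemma \ref{lem:trace-Phi_w} passes the boundary coupling, the penalty term and the friction terms to the limit, and delivers item (2) of Definition \ref{def:weak-extended} in the limit of the scheme. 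The only genuinely delicate fluid point is the strong convergence of the density needed to identify $p_\delta(\vr)$: this follows, as usual, from the effective viscous flux identity together with the Lions/Feireisl renormalization technique, which go through here because $\beta$ is chosen large enough that $\vr^{(h)}$ is bounded in $L^2_{\mathrm{loc}}$ and because $\bbs_\kappa$ is smooth and uniformly elliptic on $B$. Passing to the limit in every term of \eqref{eqs:weak-renormal-ext}--\eqref{eqs:weak-momentum-ext} and recovering \eqref{eqs:weak-energy-inequality-ext} by weak lower semicontinuity finishes the construction.

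I expect the main obstacle to be the interplay between the Navier-slip coupling and the splitting scheme. In contrast with the no-slip case, the admissible test pairs $(\bphi,\psi)$ are linked only through the \emph{normal}-trace identity $(\psi\bn)\cdot(\bn^w\circ\bPhi_w)=\tr_{\Gamma^w}\bphi\cdot(\bn^w\circ\bPhi_w)$, a weaker constraint than equality of full traces; one must therefore choose the finite-dimensional spaces for $\bu$ and $w$ in the fluid substep compatibly with this constraint on the (discretely frozen, but only Sobolev-regular) geometry and verify that this compatibility is stable as $h\to0$ --- here the low regularity of $w$, which already prevents $\Omega^w(t)$ from being uniformly Lipschitz in time, makes the handling of $\bn^w$ delicate. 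Moreover the friction terms require strong convergence of the \emph{tangential} traces $\bu_{\btau^w}$ along a boundary whose normal $\bn^w$ itself converges only weakly-$\ast$, so the products $\bu^{(h)}_{\btau^w}\cdot\bphi_{\btau^w}$ must be treated by splitting off the strongly convergent geometry factor from the weakly convergent velocity factor. These points are resolved along the lines of \cite{KMN2023,MMNRT2022,MC2016}, to which we refer for the remaining routine details.
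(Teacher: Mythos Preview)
Your proposal is correct and follows essentially the same strategy the paper sketches: time discretization plus operator splitting into fluid and structure subproblems, penalization of the kinematic coupling with parameter $1/\kappa$, and passage $h\to 0$ via the uniform energy estimates, the fundamental lemma on the vanishing of $\vr$ outside $\Omega^w$, and the Lions--Feireisl compactness machinery, all following \cite{KMN2023,MMNRT2022}. One small over-complication: your worry about choosing Galerkin spaces compatible with the normal-trace constraint is moot precisely because the penalty term replaces that constraint at the discrete level---this is the point of penalization, and why the paper remarks that the Navier-slip boundary ``is not an issue'' for the extended regularized problem.
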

\begin{proof}
    The proof relies on a time discretization scheme, together with an operator splitting method, which was employed in \cite{KMN2023,MMNRT2022} for other related fluid-structure interaction problems. For the splitting method, we decouple the problem into two subproblems: fluid and structure, then we proceed with the method from \cite{KMN2023,MMNRT2022} and passing with time layer to zero we get the existence of the approximate solution. The differences result from the Navier-slip boundary, which is not an issue to prove the existence of weak solutions to the extended regularized problem. For simplicity, we omit the details here.
\end{proof}


\subsection{Uniform estimates}
\label{sec:uniform-estimates}

In this section, we are going to derive uniform in $\kappa$ and $\delta$ estimates so that one is able to further pass to the limit as $\kappa = \delta \to 0$ step by step, i.e., the extension limit and the vanishing artificial pressure limit. {Let $(\vr^\delta, \bu^\delta, w^\delta)$ be a pair of approximate solutions constructed in Theorem \ref{thm:weak-ext}}, for which we suppress the dependence on $ \kappa $ for simplicity. Then in view of \eqref{eqs:weak-energy-inequality-ext}, we conclude that following uniform (in $\delta$ and $ \kappa $) boundedness
\begin{alignat*}{3}
    w^\delta & \quad  \text{ is bounded in } \quad && L^\infty(0,T; H^2(\Gamma)), \\
    \pt w^\delta & \quad  \text{ is bounded in } \quad && L^\infty(0,T; L^2(\Gamma)), \\
    \sqrt{\nu_s} \pt w^\delta & \quad  \text{ is bounded in } \quad && L^2(0,T; H^1(\Gamma)), \\
    \vr^\delta & \quad  \text{ is bounded in } \quad && L^\infty(0,T; L^\gamma(B)), \\
    \delta^{\frac{1}{\beta}} \vr^\delta & \quad  \text{ is bounded in } \quad && L^\infty(0,T; L^\beta(B)), \\
    \sqrt{\vr^\delta} \bu^\delta & \quad  \text{ is bounded in } \quad && L^\infty(0,T; L^2(B)), \\
    \bbs_\delta(\nabla \bu^\delta) : \nabla \bu^\delta & \quad  \text{ is bounded in } \quad && L^1((0,T) \times B). 
\end{alignat*}
Moreover, a direct use of Korn's inequality on H\"older domain (e.g., \cite[Lemma 3.9]{MMNRT2022}) yields
\begin{alignat}{3}
	\label{eqs:v-delta-regularity}
    \nabla \bu^\delta & \quad  \text{ is bounded in } \quad && L^2(0,T; L^q(\Omega^{w^\delta}(t))), \text{ for } 1 \leq q < 2,
\end{alignat}
which by Sobolev embedding implies
\begin{alignat*}{3}
	\bu^\delta & \quad  \text{ is bounded in } \quad && L^2(0,T; L^p(B)), \text{ for } 1 \leq p < 6.
\end{alignat*}
Indeed, 
\begin{align*}
    \int_{Q_T^{w^\delta}} & \Big( \mu \abs{\nabla \bu^\delta + \tran{(\nabla \bu^\delta)}}^2  + \lambda \abs{\Div \bu^\delta \bbi}^2 \Big) \dxdt \\
    & = \int_{Q_T^{w^\delta}} \Big( \mu (\nabla \bu^\delta + \tran{(\nabla \bu^\delta)})  + \lambda \Div \bu^\delta \bbi \Big) : \nabla \bu^\delta \dxdt
    \leq \int_{(0,T) \times B} \bbs_\delta(\nabla \bu^\delta) : \nabla \bu^\delta \dxdt \leq C,
\end{align*}
where $ C > 0 $ does not depend on $\delta > 0$. Then by \cite[Lemma 2.4]{KMN2023}, one has
\begin{alignat*}{3}
	\bu^\delta & \quad  \text{ is bounded in } \quad && L^2(0,T; L^{p}(\bbr^3)), \text{ for } 1 \leq p < 6.
\end{alignat*}

Now we derive the integrability of the density. Since the domain now is not uniformly Lipschitz due to elastic structure regularity, the usual technique of the Bogovski\u{i} operator (e.g., \cite{FN2017,NS2004}) does not work in general. However, one can overcome the problem by considering the domain separately. More precisely, away from the `bad' boundary, it is easy to derive the equi-integrability by the usual Bogovski\u{i} operator, i.e.,
\begin{equation}
	\label{eqs:Bogovskii-D}
	\int_D \left( (\vr^\delta)^{\gamma + \theta} + \delta (\vr^\delta)^{\beta + \theta} \right) \dxdt \leq C(D),
\end{equation}
for some $ \theta > 0 $ and any $ D \subset \subset \bigcup_{t \in [0,T]} \{t\} \times (\overline{B} \setminus \Gamma^{w^\delta}(t)) $, where $ C > 0 $ is independent of $\delta$. 
Moreover, we need to exclude the possible concentration of pressure at the moving boundary as follows:
\begin{lemma}[Equi-integrability]
	\label{lem:equi-integrability-with}
    Let either $ \nu_s > 0 $, $\gamma > \frac{3}{2}$ or $\Omega \in \bbtwo$, $\nu_s = 0$, $ \gamma > \frac{12}{7} $. For any $\varepsilon > 0$, there exists a $\varepsilon_0 > 0$ and $A_\varepsilon \subset \subset (0,T) \times B$ such that for all $\varepsilon \leq \varepsilon_0$ the following holds
    \begin{equation}
    	\label{eqs:equi-integrability}
        A_\varepsilon \cap (\Gamma^{w^\delta}_T) = \emptyset, \quad 
        \int_{((0,T) \times B) \setminus A_\varepsilon} \left( (\vr^\delta)^\gamma + \delta (\vr^\delta)^\beta\right) \dxdt \leq \varepsilon.
    \end{equation}
\end{lemma}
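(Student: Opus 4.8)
The whole statement reduces to one quantitative fact: the regularised pressure $p_\delta(\vr^\delta)=(\vr^\delta)^\gamma+\delta(\vr^\delta)^\beta$ cannot concentrate in a vanishingly thin inner neighbourhood of the moving interface, uniformly in $\delta$. Writing $S_h^\delta(t)\coloneqq\{x\in\Omega^{w^\delta}(t):\dist(x,\Gamma^{w^\delta}(t))<h\}$, the core is to prove
\begin{equation*}
	\lim_{h\to 0}\ \sup_{\delta}\ \int_0^T\!\!\int_{S_h^\delta(t)}p_\delta(\vr^\delta)\dxdt=0 .
\end{equation*}
Granting this, I take $A_\varepsilon$ to be $(0,T)\times B$ with the tube $\{(t,x):\dist(x,\Gamma^{w^\delta}(t))<h\}$ (for $h$ so small that the supremum above is $<\varepsilon/2$) removed, together with two thin time slabs $(0,\tau)\cup(T-\tau,T)$ and an outer collar $\{|x|>2R-\tau\}$. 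Then $A_\varepsilon\cap\Gamma^{w^\delta}_T=\emptyset$ by construction; on the complement the tube contributes $\le\varepsilon/2$, the slabs at most $2\tau\,\esssup_t\int_B p_\delta(\vr^\delta)(t)\le 2\tau C$ by the energy inequality \eqref{eqs:weak-energy-inequality-ext}, and the outer collar nothing since $\vr^\delta$ vanishes on $B\setminus\Omega^{w^\delta}(t)$; taking $\tau$ small closes it. The interior higher integrability \eqref{eqs:Bogovskii-D}, uniform in $\delta$ at a fixed distance from the interface, is not needed for the smallness itself, only for the equi-integrability of $p_\delta(\vr^\delta)$ \emph{on} $A_\varepsilon$ used later.

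\textbf{The test field.}
To obtain the displayed limit I would test the momentum identity \eqref{eqs:weak-momentum-ext} on $[0,T]$ (after the routine density extension of the admissible class) with a pair $(\bphi_h,0)$: taking $\psi\equiv 0$ and choosing $\bphi_h$ to vanish on $\Gamma^{w^\delta}(t)$, so that the coupling constraint $(\psi\bn)\cdot(\bn^w\circ\bPhi_w)=\tr_{\Gamma^w}\bphi_h\cdot(\bn^w\circ\bPhi_w)$ holds trivially, the friction term over $\Gamma^{w}(\tau)$ drops out, and every structure term vanishes. In normal coordinates around $\partial\Omega$---for $\Omega\in\bbtwo$ simply $\bphi_h(t,x)=\psi_h\!\big(x_3-1-w^\delta(t,x')\big)\be_3$---I pick a scalar profile $\psi_h$ with $\psi_h(0)=0$, equal to $+1$ on the $h$-layer and to a negative constant of size $O(h)$ on a \emph{fixed}-width inner collar, with $\int\psi_h'=0$ (balancing the zero outflux); thus $|\psi_h|\lesssim h$, $|\psi_h'|\le 1$. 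Then $\Div\bphi_h$ is the indicator of the $h$-layer minus a constant of size $O(h)$, while $\nabla\bphi_h$ and $\pt\bphi_h$ carry one collar-supported factor $\psi_h'$ times, respectively, $(-\nabla w^\delta,1)$ (with $\nabla w^\delta\in L_t^\infty H_\Gamma^1$) and $-\pt w^\delta$. Inserting $(\bphi_h,0)$, \eqref{eqs:weak-momentum-ext} rewrites $\int_0^T\!\int_{h\text{-layer}}p_\delta(\vr^\delta)$ as a convective contribution $-\int\vr^\delta\bu^\delta\otimes\bu^\delta:\nabla\bphi_h$, a viscous one $\int\bbs(\nabla\bu^\delta):\nabla\bphi_h$ (on $\operatorname{supp}\bphi_h\subset\Omega^{w^\delta}(t)$, so $\bbs_\kappa=\bbs$), an inertial one $-\int\vr^\delta\bu^\delta\cdot\pt\bphi_h$, endpoint terms, and the $O(h)$-constant times $\int_B p_\delta(\vr^\delta)\le C$.

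\textbf{Estimates.}
Each remainder must be $o(1)$ as $h\to 0$, uniformly in $\delta$, from the energy bounds alone. The constant and endpoint terms are $\lesssim h$ since $\|\bphi_h\|_{L^\infty}\lesssim h$ and $\vr^\delta\bu^\delta,\ p_\delta(\vr^\delta)$ are bounded in $L_t^\infty L_x^1$. For the viscous term, $\|\bbs(\nabla\bu^\delta)\|_{L_t^2 L_x^q}\lesssim 1$ with $q<2$ by \eqref{eqs:v-delta-regularity}, while the collar-support (thin in the normal direction, and $H^1(\Gamma)\hookrightarrow L^{q'}(\Gamma)$) gives $\|\nabla\bphi_h\,\bbone_{\mathrm{collar}}\|_{L_t^2 L_x^{q'}}\lesssim h^{1/q'}$, so it is $\lesssim h^{1/q'}\to 0$. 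For the convective term, $\vr^\delta|\bu^\delta|^2$ is bounded in $L_t^1 L_x^s$ with $s=\tfrac{3\gamma}{3+\gamma}$ (from $\vr^\delta\in L_t^\infty L_x^\gamma$, $\bu^\delta\in L_t^2 L_x^p$, $p<6$) and $\|\nabla\bphi_h\,\bbone_{\mathrm{collar}}\|_{L_t^\infty L_x^{s'}}\lesssim h^{1/s'}$; the pairing needs $s>1$, i.e.\ $\gamma>\tfrac32$---the restriction common to both cases---and then it is $\lesssim h^{1/s'}\to 0$. The inertial term is where the regimes split. If $\nu_s>0$, then $\pt\bphi_h$ is controlled through $\pt w^\delta\in L_t^2 H_\Gamma^1\hookrightarrow L_t^2 L_\Gamma^m$ for all $m<\infty$ (equivalently via $\pt\bPhi_{w^\delta}\in L_t^2 L_x^m$, \eqref{eqs:pt-Phi_w}), paired with $\vr^\delta\bu^\delta\in L_t^\infty L_x^{2\gamma/(\gamma+1)}$ at no extra cost, giving $\lesssim h^{1/m}\to 0$ for $\gamma>1$; the binding constraint stays $\gamma>\tfrac32$. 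If $\Omega\in\bbtwo$ and $\nu_s=0$, there is no $H^1$-control of $\pt w^\delta$; instead I substitute the kinematic identity \eqref{eqs:pt_w=un}, $\pt w^\delta=\tr_{\Gamma^{w^\delta}}\bu^\delta\cdot(-\nabla w^\delta,1)$, which turns the inertial term into a bilinear expression in $\bu^\delta$ with one bulk factor $\vr^\delta\bu^\delta$ and one trace factor $\tr_{\Gamma^{w^\delta}}\bu^\delta$. After integrating out the $h$-thin normal direction the bulk factor lies in $L_t^2 L_\Gamma^{6\gamma/(6+\gamma)}$ with a prefactor $h^{\theta}$, $\theta>0$, while the trace factor is only in $L_t^2 L_\Gamma^p$ for $p<4$ (Lemma \ref{lem:trace-Phi_w}, using also Lemma \ref{lem:multiplication} and Proposition \ref{prop:more-spatial-regularity} to absorb $(-\nabla w^\delta,1)$). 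Hölder then forces $\big(\tfrac{6\gamma}{6+\gamma}\big)'<4$, i.e.\ $\gamma>\tfrac{12}{7}$, which is precisely the restriction in the undamped flat case. Summing the bounds gives the displayed limit, hence the lemma.

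\textbf{Main obstacle.}
The crux is the simultaneous construction and estimation of $\bphi_h$ knowing only $w^\delta\in L_t^\infty H_\Gamma^2$, so that $\Gamma^{w^\delta}(t)$ is merely uniformly H\"older (not Lipschitz) and $\bn^{w^\delta}$ only $H^1$: one must keep $\bphi_h$ admissible and compatible with the Navier-slip constraint (handled by $\bphi_h|_{\Gamma^{w^\delta}}=0$, $\psi=0$), avoid the blow-up of the Bogovski\u{\i} constant on a thin layer (handled by spreading the divergence correction over a fixed-width inner collar, so that $\|\bphi_h\|_{L^\infty}\lesssim h$ while $\Div\bphi_h$ is still localized at scale $h$), and track exactly how much the composition with $\bPhi_{w^\delta}$---and, in the undamped flat case, the substitution \eqref{eqs:pt_w=un} with its trace loss---costs, since that bookkeeping is precisely what separates $\gamma>\tfrac32$ from $\gamma>\tfrac{12}{7}$. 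The rest (absolute continuity of the integrals, the interior estimate \eqref{eqs:Bogovskii-D}, the vanishing of $\vr^\delta$ outside $\Omega^{w^\delta}(t)$) is standard.
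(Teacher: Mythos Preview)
Your proposal is correct and follows the same strategy the paper indicates (it defers the proof to \cite{BS2018,KMN2023,MMNRT2022} and only sketches the key modification): test the momentum identity with a normal field vanishing on $\Gamma^{w^\delta}$ so that $\psi=0$ is admissible and the Navier--slip friction term drops, let its divergence localize the pressure to the $h$-layer, and in the undamped flat case substitute $\pt w^\delta=(\bu^\delta\cdot\bn^{w^\delta})\circ\bPhi_{w^\delta}$ to trade the missing $H^1$-control of $\pt w^\delta$ for the trace regularity $\tr_{\Gamma^{w^\delta}}\bu^\delta\in L^2_tL^r_\Gamma$, $r<4$, which against $\vr^\delta\bu^\delta\in L^2_tL^q_{w^\varepsilon}$, $q<6\gamma/(\gamma+6)$, forces exactly $\gamma>\tfrac{12}{7}$. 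One slip in your write-up: you say ``$\psi_h$ equal to $+1$ on the $h$-layer,'' but your own constraints $\psi_h(0)=0$, $|\psi_h|\lesssim h$, and ``$\Div\bphi_h$ is the indicator of the $h$-layer'' show you mean $\psi_h'=1$ there (and $\psi_h'=-c_h=O(h)$ on the fixed-width collar); with that reading the bound $\|\nabla\bphi_h\|_{L^{q'}}\lesssim h^{1/q'}$ holds because the $h$-layer has small measure while the fixed collar carries small amplitude $c_h$, not because the collar is thin.
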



\begin{remark}
	\label{rem:equi-integrability}
	For the proof, we refer to \cite{BS2018,KMN2023,MMNRT2022}. This problem was first handled in \cite[Lemma 7.4]{BS2018} for compressible fluid-structure interaction problems initiated by \cite{Kukucka2009}. 
    Recently in \cite[Lemma 3.4]{MMNRT2022}, it was proved again for the Navier--Stokes--Fourier system coupled with a thermoelastic shell with viscoelasticity and inertial rotation. 
    Following the proof of \cite[Lemma 6.4]{BS2018}, one can shows there is no extra restriction on $\gamma > \frac{3}{2}$ with the structure damping $\nu_s \Delta \pt w$.
    In the case of $\nu_s = 0$, Breit--Schwarzacher \cite{BS2018} improved the result by means of the no-slip boundary condition $ \pt w \bn = \bu \circ \bPhi_w $. When encountering the slip condition \eqref{eqs:velocity-boundary}, we can still make it by adapting it to the proof of \cite[Lemma 7.4]{BS2018}, but only in the case $\Omega \in \bbtwo$. More precisely, in this case we know $ \pt w = (\bu \cdot \bn^w) \circ \bPhi_w $, then
    in \cite[(7.15)]{BS2018}, instead of using no-slip boundary condition, one replaces $ \pt w $ by $ (\bu \cdot \bn^w) \circ \bPhi_w $, which lies in the space $ L^2(0,T;L^r(\Gamma)) $ for any $ r < q < 4 $, as $ \tr_{\Gamma^w} \bu \in L^2(0,T;L^q(\Gamma)) $ for any $ q < 4 $ and $ \bn^w \in L^\infty(0,T;L^p(\Gamma)) $, for any $ p < \infty $. Then it is needed that
    \begin{equation*}
         \vr \bu \in L^2(0,T; L^{q}(\Omega^w(t))),
    \end{equation*}
    with $ q > \frac{6 \gamma}{\gamma + 6} > \frac{4}{3} $, for $\gamma > \frac{12}{7}$.

\end{remark}

\subsection{Existence of weak solutions}
\label{sec:weak-ext}

Based on the uniform estimates derived in the previous section, we are now devoted to proving Theorem \ref{thm:weak} by passing to the limit as $\kappa = \delta \to 0$. Up to a non-relabeled subsequence, we have
\begin{alignat*}{4}
    w^\delta & \rightarrow w, \quad && \text{weakly-}* \quad && \text{in } L^\infty(0,T; H^2(\Gamma)), \\
    \pt w^\delta & \rightarrow \pt w, \quad && \text{weakly-}* \quad && \text{in } L^\infty(0,T; L^2(\Gamma)), \\
    \sqrt{\nu_s} \pt w^\delta & \rightarrow \sqrt{\nu_s} \pt w, \quad && \text{weakly} \quad && \text{in } L^2(0,T; H^1(\Gamma)), \\
    \vr^\delta & \rightarrow \vr, \quad && \text{weakly-}* \quad && \text{in } L^\infty(0,T; L^\gamma(B)), 
     \\
     \bu^\delta & \rightarrow \bu, \quad && \text{weakly} \quad && \text{in } L^2(0,T; W^{1,q}(\bbr^3)), \text{ for } 1 \leq q < 2.
\end{alignat*}
Note that by a similar argument to that in \cite[Lemma 4.4]{KMN2023}, we conclude $ \vr_{|B \setminus \Omega^{w^\delta}(t)} = 0 $ for a.e. $ t \in (0,T) $.

A direct application of the Kondrachov embedding theorem and the Aubin--Lions lemma tells us
\begin{alignat*}{4}
	w^\delta & \rightarrow w, \quad && \text{strongly} \quad && \text{in } C([0,T]; H^{r}(\Gamma)),
\end{alignat*}
for $ 1 < r < 2 $.
and 
\begin{alignat*}{4}
	w^\delta & \rightarrow w, \quad && \text{strongly} \quad && \text{in } C([0,T] \times \Gamma).
\end{alignat*}
By definition, we know that
\begin{alignat}{4}
	\label{eqs:Phi_w-strong-convergence}
	\bPhi_{w^\delta} & \rightarrow \bPhi_w, \quad && \text{strongly} \quad && \text{in } C([0,T] \times \Gamma).
\end{alignat}
Now we are going to give an additional regularity for the displacement $ w $, which was also obtained originally in \cite{MS2022} to tackle the incompressible Navier--Stokes equation interacting with a nonlinear Koiter shell, and then developed in \cite{BS2021,KMN2023,Trifunovic2023} for related compressible fluid-structure interaction problems.
\begin{proposition}[Additional spatial regularity]
	\label{prop:more-spatial-regularity}
	Let $ (\vr^\delta, \bu^\delta, w^\delta) $ be a weak solution of the extended problem in the sense of Definition \ref{def:weak-extended} on $ (0,T) $.
	If $ \nu_s > 0 $, $ \gamma > \frac{3}{2} $,
	Then the displacement $ w^\delta $ admits an additional regularity
	\begin{equation}
		\label{eqs:w^delta-addtional-regularity}
		w^\delta \in L^2(0,T; H^{2 + s}(\Gamma)) \  \text{ for some } 0 < s < \onehalf,
	\end{equation}
	which depends on $ \Gamma $ and the initial data, but is independent of $ \delta > 0 $.
	If $ \Omega \in \bbtwo $, $ \nu_s = 0 $, $ \gamma > \frac{12}{7} $, then it holds
	\begin{align}
		& \pt w^\delta \in L^2(0,T; W^{s, r}(\Gamma)) \  \text{ for all } 0 < s  < 2 - \frac{3}{r},\  \frac{3}{2} < r < 2, \\
		\label{eqs:w^delta-addtional-regularity-t}
		& w^\delta \in L^2(0,T; H^{2 + s}(\Gamma)) \  \text{ for some } 0 < s < \onequater,
	\end{align}
	which depends on $ \Gamma $ and the initial data, but is independent of $ \delta > 0 $.
	
\end{proposition}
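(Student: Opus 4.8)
The plan is to derive the additional spatial regularity of the displacement $w^\delta$ by exploiting the bending equation $\pt^2 w + \Delta^2 w - \nu_s \Delta \pt w = \cF^w$ and bootstrapping the regularity of the right-hand side $\cF^w$, which is the normal component of the fluid traction. The key structural point is that in a flat geometry one has the kinematic identity $\pt w^\delta = (\bu^\delta \cdot \bn^{w^\delta}) \circ \bPhi_{w^\delta}$, so the trace regularity of $\bu^\delta$ (from \eqref{eqs:v-delta-regularity}, namely $\bu^\delta \in L^2(0,T;W^{1,q}(\Omega^{w^\delta}))$ with $q<2$) transfers, via the multiplication lemma (Lemma \ref{lem:multiplication}) applied to $\tr_{\Gamma^w}\bu^\delta$ and $\bn^{w^\delta}$, to a fractional Sobolev bound for $\pt w^\delta$ in space. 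I would test the structure equation against a suitable fractional-order operator $(-\Delta)^{s}$ (or work with difference quotients $\tau_h$ in $\Gamma$) applied to $w^\delta$ — more precisely, use $(-\Delta)^s \pt w^\delta$ or a mollified analogue as a test function in the coupled momentum weak formulation \eqref{eqs:weak-momentum-ext}, being careful that the choice respects the constraint $(\psi\bn)\cdot(\bn^w\circ\bPhi_w) = \tr_{\Gamma^w}\bphi\cdot(\bn^w\circ\bPhi_w)$ — which forces a simultaneous choice of the bulk test function $\bphi$ as a (harmonic) extension of $\psi$.

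**Case $\nu_s>0$.** Here the damping term $-\nu_s\Delta\pt w$ gives, upon testing with $(-\Delta)^s\pt w^\delta$, a coercive term $\nu_s \|\nabla(-\Delta)^{s/2}\pt w^\delta\|_{L^2_{t,\Gamma}}^2$, i.e.\ control of $\pt w^\delta$ in $L^2(0,T;H^{1+s}(\Gamma))$, while the bending term yields control of $\Delta^2$-energy at level $H^{2+s}$. The main inputs to bound on the right are: (i) the pressure trace $p_\delta(\vr^\delta)\bn^w\cdot\bn$ on $\Gamma^w$, for which one uses the equi-integrability near the boundary from Lemma \ref{lem:equi-integrability-with} (this is where $\gamma>\tfrac32$ enters) together with the trace estimate of Lemma \ref{lem:trace-Phi_w}; (ii) the viscous stress trace $\bbs(\nabla\bu^\delta)\bn^w\cdot\bn$, controlled by $\nabla\bu^\delta\in L^2 L^q$ and again Lemma \ref{lem:trace-Phi_w}; and (iii) the inertial/convective contributions $\vr^\delta\bu^\delta$, $\vr^\delta\bu^\delta\otimes\bu^\delta$, handled by the uniform $L^\infty L^\gamma$ and $L^\infty L^2$ bounds on $\vr^\delta$, $\sqrt{\vr^\delta}\bu^\delta$ plus the Sobolev embedding $\bu^\delta\in L^2 L^p$, $p<6$. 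One then reads off $s<\tfrac12$ by balancing the gain from the damping term against the worst loss in the trace/multiplication estimates.

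**Case $\Omega\in\bbtwo$, $\nu_s=0$.** Without damping the gained regularity of $\pt w^\delta$ must come entirely from the kinematic condition: since $\tr_{\Gamma^w}\bu^\delta\in L^2(0,T;W^{1-\frac1r,r}(\Gamma))$ and $\bn^{w^\delta}\in L^\infty(0,T;W^{1,2}(\Gamma))\hookrightarrow L^\infty L^p$ for all $p<\infty$, Lemma \ref{lem:multiplication} gives $\pt w^\delta = (\bu^\delta\cdot\bn^{w^\delta})\circ\bPhi_{w^\delta}\in L^2(0,T;W^{s,r}(\Gamma))$ for $0<s<2-\frac3r$, $\tfrac32<r<2$, which is the first assertion. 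Plugging this into the structure equation (now $\pt^2w^\delta + \Delta^2 w^\delta = \cF^w$) and testing against a fractional-order multiplier — effectively integrating the identity $\Delta^2 w^\delta = \cF^w - \pt^2 w^\delta$ against $(-\Delta)^s w^\delta$ and using that $\pt^2 w^\delta$ pairs against the time-derivative structure to produce $\|\pt w^\delta\|$ at the gained fractional level — yields $w^\delta\in L^2(0,T;H^{2+s}(\Gamma))$ for some $0<s<\tfrac14$; the smaller exponent $\tfrac14$ compared to $\tfrac12$ reflects that $\pt w^\delta$ now only has $W^{s,r}$ regularity with $r<2$ (hence worse Sobolev embedding) rather than the $H^{1+s}$ coming from damping. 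Crucially this argument needs $\gamma>\tfrac{12}{7}$ so that the pressure term $p_\delta(\vr^\delta)$ is controlled near the boundary via the sharper equi-integrability of Lemma \ref{lem:equi-integrability-with} (cf.\ Remark \ref{rem:equi-integrability}, where $\vr^\delta\bu^\delta\in L^2 L^q$ with $q>\tfrac43$ is exactly what is needed).

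**The main obstacle** is not the algebra of Sobolev exponents but the admissibility of the test function: because the Navier-slip constraint couples $\psi$ on $\Gamma$ to the normal trace of the bulk test field $\bphi$ through the deformed normal $\bn^w\circ\bPhi_w$, one cannot simply test the structure equation with $(-\Delta)^s w^\delta$ in isolation — one must construct a compatible pair $(\bphi,\psi)$, typically by taking $\psi = (-\Delta)^s\pt w^\delta$ (suitably mollified/truncated in time so it is an admissible smooth test function after approximation) and $\bphi$ an extension into $\Omega^{w^\delta}(t)$ (for instance a divergence-free or harmonic extension adapted to $\bPhi_{w^\delta}$) whose trace satisfies the constraint, and then argue that the extra bulk terms generated by $\bphi$ are absorbed by the uniform estimates. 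Making this rigorous while keeping all constants independent of $\delta$ — in particular controlling the extension operator uniformly despite the non-Lipschitz, $\delta$-dependent domain $\Omega^{w^\delta}(t)$ — is the delicate part, and is precisely where one must invoke the Hölder-domain Korn inequality and the uniform transformation estimates \eqref{eqs:estimate-Phi}–\eqref{eqs:pt-Phi_w} for $\bPhi_{w^\delta}$.
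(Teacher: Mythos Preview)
Your approach matches the paper's: for $\nu_s>0$ it defers to \cite{MS2022,Trifunovic2023}, and for $\nu_s=0$, $\Omega\in\bbtwo$ it uses the kinematic identity together with Lemmata~\ref{lem:trace-Phi_w} and~\ref{lem:multiplication} exactly as you outline, then invokes the same coupled test-function technique. One clarification worth recording: in the coupled weak form \eqref{eqs:weak-momentum-ext} the pressure and viscous stress appear only as \emph{bulk} integrals $\int p_\delta\,\Div\bphi$ and $\int\bbs(\nabla\bu^\delta):\nabla\bphi$, never as boundary traces (which do not exist at this regularity), and the $\gamma$-threshold in the regularity argument arises not from pressure equi-integrability but from pairing the momentum density $\vr^\delta\bu^\delta$ against $\pt\bphi$ --- the paper's derivation of \eqref{eqs:add-s-nodamping} shows the binding constraint is the embedding $W^{\sigma,r}(\Gamma)\hookrightarrow W^{2s,p}(\Gamma)$ with $p\geq\tfrac{6\gamma}{5\gamma-6}$, the dual of the $\vr^\delta\bu^\delta$ integrability exponent.
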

\begin{proof}
    For the case $ \nu_s > 0 ,$ in view of the regularity $\partial_{t}w^{\delta}\in L^{2}(H^{1}(\Gamma))$ one can adapt the arguments from \cite{MS2022,Trifunovic2023} in a rather straight forward manner to show \eqref{eqs:w^delta-addtional-regularity}.\\
    If $ \nu_s = 0 $, in order to use the arguments from \cite{MS2022,Trifunovic2023}, we need some spatial regularity of $\partial_{t}w^{\delta}$. We obtain that regularity in the following discussion by exploiting the interface coupling condition which allows to transfer the effect of fluid dissipation to the structure. Let us make use of the kinematic boundary condition \eqref{eqs:pt_w=un} for $\Omega \in \bbtwo$ to gain regularity from velocity, i.e., $$\pt w^\delta = (\bu^\delta \cdot \bn^w) \circ \bPhi_{w^\delta}.$$ More precisely, in view of \eqref{eqs:v-delta-regularity} and Lemma \ref{lem:trace-Phi_w}, we know $ \bu^\delta \circ \bPhi_{w^\delta} \in L^2(0,T; W^{1 - \frac{1}{r},r}(\Gamma)) $ for all $ 1 \leq r < 2 $. Moreover, the normal $ \bn^w \circ \bPhi_{w^\delta} \in L^\infty(0,T; W^{1,r}(\Gamma)) $, which inherits from the regularity of $ \nabla w^\delta $. Then by Lemma \ref{lem:multiplication}, the multiplication property of Sobolev spaces, it follows
	\begin{equation*}
		\normm{\pt w^\delta}_{L_t^2 W_\Gamma^{\sigma,r}}
		\leq C \normm{\bu^w \circ \bPhi_{w^\delta}}_{L_t^2 W_\Gamma^{1 - \frac{1}{r},r}}
		\normm{\bn^w \circ \bPhi_{w^\delta}}_{L_t^\infty W_\Gamma^{1,r}}
		< \infty,
	\end{equation*}
	with any $ 0 < \sigma < 2 - \frac{3}{r} $ and $ \frac{3}{2} < r < 2 $, where $ C > 0 $ depends on the $ \Gamma $, the solution $ \bu^\delta $ and $ w^\delta $.
	Now proceeding with the similar argument as in \cite{MS2022,Trifunovic2023}, one is able to recover the regularity of $ w^\delta $ for some $ 0 < s < \onequater $ and all $ \gamma > \frac{12}{7} $.
	Indeed, in this case $ s $ can be chosen precisely as
	\begin{equation}
		\label{eqs:add-s-nodamping}
		0 < s < \min\left\{\frac{5 - \frac{6}{r} + 3\sigma}{6} - \frac{1}{\gamma}, \frac{\sigma}{2}\right\}, \quad 0 < \sigma < 2 - \frac{3}{r}, \ \frac{3}{2} < r < 2.
	\end{equation}
	If one only takes some $ r $ sufficiently close to $ 2 $, we get lower bound $ \gamma > \frac{12}{7} $. For the derivation of \eqref{eqs:add-s-nodamping}, we refer to \cite[Section 3.1]{Trifunovic2023} for more details, with a slight modification. The key point here is that due to technical reasons (constructing suitable test functions for $\bu^\delta$ based on $\pt w^\delta$) as in \cite[Section 3.1]{Trifunovic2023}, one has to ensure the validation of the embedding
	\begin{equation*}
		W^{\sigma,r}(\Gamma) \hookrightarrow W^{2s,p}(\Gamma), \ \text{ for } p \geq \frac{6 \gamma}{5\gamma - 6},
	\end{equation*}
	for $ 0 < \sigma < 2 - \frac{3}{r} < \onehalf $ from additional spatial regularity of $ \pt w^\delta $ with $ \frac{3}{2} < r < 2 $.  Then by Sobolev embedding, it has to hold $ \sigma - \frac{2}{r} \geq 2s - \frac{2}{p} $, entailing
	\begin{equation*}
		0 < 2s < \sigma \ \text{ and } \  \frac{6\gamma}{5\gamma - 6} \leq p < \frac{2}{2s + \frac{2}{r} - \sigma}.
	\end{equation*}
	To find some suitable $ p $, one arrives at \eqref{eqs:add-s-nodamping}.
\end{proof}

\subsubsection{Recover the boundary identity}
\label{sec:recover-boundary}
This subsection is devoted to recovering the continuity of the limiting normal velocities at the boundary. In general, it is rather easier for the case of Dirichlet boundary condition, which does not require more compactness of the structure displacement; see, e.g. \cite[Section 6.1.2]{KMN2023}. However, encountering the \emph{Navier-slip} boundary condition, this is more difficult, since we need more compactness to justify the limit of the deformed normal $\bn^w$ depending nonlinearly on $\nabla w$.
On noting \eqref{eqs:w^delta-addtional-regularity} and the uniform boundedness of $ \pt w^\delta $, i.e.,
\begin{align*}
	\sqrt{\nu_s} \pt w^\delta & \in L^2(0,T; H^1(\Gamma)), \\
	\pt w^\delta & \in L^2(0,T; W^{s,r}(\Gamma)),\ \text{ for some } 0 < s  < \onehalf,\  \frac{3}{2} < r < 2,
\end{align*}
an employment of the Kondrachov embedding theorem and the Aubin--Lions compactness lemma yields
\begin{alignat*}{4}
	w^\delta & \rightarrow w, \quad && \text{strongly} \quad && \text{in } L^2(0,T; H^{2+\epsilon}(\Gamma)),
\end{alignat*}
where $ 0 < \epsilon < s $, implying
\begin{alignat*}{4}
	\nabla w^\delta & \rightarrow \nabla w, \quad && \text{strongly} \quad && \text{in } L^2(0,T; C(\Gamma)).
\end{alignat*}
Then we record
\begin{alignat}{4}
	\label{eqs:n^w-strong-convergence}
	\bn^{w^\delta} \circ \bPhi_{w^\delta} & \rightarrow \bn^w \circ \bPhi_w, \quad && \text{strongly} \quad && \text{in } L^2(0,T; C(\Gamma)).
\end{alignat}
In fact, the outer normal $ \bn^{w^\delta} $ inherits the same regularity as $ \nabla w^{\delta} $, in view of \eqref{eqs:Phi_w-strong-convergence}, $ \bn^w \circ \bPhi_w = \nabla \invtr{\bPhi_w} \bn \circ \Phi $, and that $ \bn $, $ \Phi $ are smooth. Specifically, by \eqref{eqs:bPhi_w^-1},
\begin{equation*}
	\norm{\invtr{\nabla \bPhi_{w^\delta}} - \invtr{\nabla \bPhi_{w}}}_{L^2(0,T;C(\Gamma))}
	\leq C \norm{\nabla w^\delta - \nabla w}_{L^2(0,T;C(\Gamma))}
	\to 0, \text{ as } \delta \to 0.
\end{equation*}

\begin{remark}
    Note that in Muha--\v{C}ani\'{c} \cite[Corollary 3]{MC2016}, a two-dimensional fluid-structure interaction problem  with full displacement and Navier-slip boundary condition was considered, and the deformed normal has the convergence 
    \begin{alignat*}{4}
        \bn^{w^\delta} & \rightarrow \bn^w, \quad && \text{strongly} \quad && \text{in } L^\infty(0,T; C(\Gamma)).
    \end{alignat*}
    However, in our case of 3D fluids interacting with a 2D structure, this is no longer possible, even with structure dissipation. The reason is that Sobolev embeddings strongly depend on the dimensions and we are not able to obtain the same compactness of $ w^\delta $ as in \cite[Theorem 5 \& Corollary 2]{MC2016}. Thanks to the additional regularity \eqref{eqs:w^delta-addtional-regularity} and \eqref{eqs:w^delta-addtional-regularity-t}, one can finally recover the spatial regularity with a bit of sacrifice of time integrability, compared to \cite{MC2016}.
\end{remark}

Now we justify the kinematic condition. Recall that by the penalization we have
\begin{equation*}
	\tr_{\Gamma^{w^\delta}} \bu^\delta \cdot \big(\bn^{w^\delta} \circ \bPhi_{w^\delta}\big) = \big(\pt {w^\delta} \bn\big) \cdot \big(\bn^{w^\delta} \circ \bPhi_{w^\delta}\big),
\end{equation*}
for all $\delta > 0$. In the following one verifies that the limit solution $ (\bu, \pt w) $ in fact satisfies
\begin{equation*}
	\tr_{\Gamma^w} \bu \cdot \big(\bn^w \circ \bPhi_{w}\big)
	= \pt w \bn \cdot \big(\bn^w \circ \bPhi_{w}\big), \ton \Gamma_T.
\end{equation*}
By virtue of \eqref{eqs:n^w-strong-convergence} and the uniform weakly convergence of $ \{\pt w^\delta\}_{\delta>0} $, we derive
\begin{align*}
	\int_\Gamma & \big(\pt {w^\delta} \bn\big) \cdot \big(\bn^{w^\delta} \circ \bPhi_{w^\delta}\big) \phi \dH^2
	- \int_\Gamma \big(\pt {w} \bn\big) \cdot \big(\bn^{w} \circ \bPhi_{w}\big) \phi \dH^2 \\
	& = \int_\Gamma \big((\pt {w^\delta} - \pt w) \bn\big) \cdot \big(\bn^{w} \circ \bPhi_{w}\big) \phi \dH^2
	+ \int_\Gamma \big(\pt {w^\delta} \bn\big) \cdot \big(\bn^{w^\delta} \circ \bPhi_{w^\delta} - \bn^{w} \circ \bPhi_{w}\big) \phi \dH^2 \\
	& \leq \int_\Gamma \big((\pt {w^\delta} - \pt w) \bn\big) \cdot \Big(\big(\bn^{w} \circ \bPhi_{w}\big) \phi\Big) \dH^2
	+ C \norm{\bn^{w^\delta} \circ \bPhi_{w^\delta} - \bn^{w} \circ \bPhi_{w}}_{L^2(0,T;C(\Gamma))} \\
	& \to 0, \quad \text{as } \delta \to 0,
\end{align*}
for all $ \phi \in L^2((0,T) \times \Gamma) $.
In view of the fact that $ \{\bu^\delta \circ \bPhi_{w^\delta}\}_{\delta > 0} $ is bounded in $ L^2(0,T; W^{1,q}(\bbr^3)) $ uniformly for any $ q \in [1,2) $, there is a function $\overline{\bu \circ \bPhi_{w}}$ in $L^2(0,T; W^{1,q}(\bbr^3))$ such that we are able to extract a non-relabeled subsequence fulfilling
\begin{alignat*}{4}
	\bu^\delta \circ \bPhi_{w^\delta} & \rightarrow \overline{\bu \circ \bPhi_{w}}, \quad && \text{weakly} \quad && \text{in } L^2(0,T; W^{1,q}(\bbr^3)), \text{ for } 1 \leq q < 2.
\end{alignat*}
Then by Lemma \ref{lem:trace-Phi_w}, one infers that
$ \rv{\overline{\bu \circ \bPhi_{w}}}_{\Gamma} \cdot \big(\bn^{w^\delta} \circ \bPhi_{w^\delta}\big) = \pt w \cdot \big(\bn^{w} \circ \bPhi_{w}\big) $ on $ \Gamma_T $, where limit of $ \overline{\bu \circ \bPhi_{w}} $ is identified by $ \bu \circ \bPhi_{w} $ following analogously \cite[Section 6.1.2]{KMN2023}.

\subsubsection{Convergence of the convection term in the momentum equation}
Now we would like to derive more convergences, in order to obtain the limit passage of the momentum equation. In particular, we will show the convergence regarding the convection term
\begin{alignat*}{4}
    \vr^\delta \bu^\delta \otimes \bu^\delta & \to \vr \bu \otimes \bu, \quad && \text{weakly} \quad && \text{in } L^1((0,T) \times B).
\end{alignat*}
First of all, one records
\begin{alignat}{4}
    \label{eqs:rho^delta-convergence-C_wL^gamma}
    \vr^\delta & \rightarrow \vr, \quad && \text{strongly} \quad && \text{in } C_w([0,T]; L^\gamma(B)).
\end{alignat}
Indeed, by the continuity equation \eqref{eqs:weak-renormal-ext} in the sense of distribution, we have
\begin{equation*}
    \frac{\d}{\dt} \int_B \vr^\delta \phi \dx = \int_B \vr^\delta \bu^\delta \cdot \nabla \phi \dx, \text{ in } \cD'(0,T) \text{ for all } \phi \in \cD(B).
\end{equation*}
Consequently, $\vr^\delta$ is uniformly continuous in $W^{-1,\frac{2\gamma}{\gamma + 1}}(B)$. Note that $\vr^\delta \in C_w([0,T]; L^\gamma(B))$ and $\vr^\delta$ is uniformly bounded in $L^\gamma(B)$. Then in view of \cite[Lemma 6.2]{NS2004}, one concludes \eqref{eqs:rho^delta-convergence-C_wL^gamma}. Moreover, it follows from the boundedness of $ \sqrt{\vr^\delta} \bu^\delta $, weak convergence of $ \bu^\delta $ and the strong convergence \eqref{eqs:rho^delta-convergence-C_wL^gamma} that
\begin{alignat*}{4}
	\vr^\delta \bu^\delta & \to \vr \bu, \quad && \text{weakly-}* \quad && \text{in } L^\infty(0,T; L^{\frac{2 \gamma}{\gamma + 1}}(B)).
\end{alignat*}
Employing the similar argument as in \cite{FN2017,NS2004} along with the weak formulation of the approximate momentum equation \eqref{eqs:weak-momentum-ext} gives rise to
\begin{alignat*}{4}
	\vr^\delta \bu^\delta & \to \vr \bu, \quad && \text{strongly} \quad && \text{in } C_w([0,T]; L^{\frac{2 \gamma}{\gamma + 1}}(B)).
\end{alignat*}
As $ \frac{2 \gamma}{\gamma + 1} > \frac{6}{5} = \frac{3 \cdot 2}{3 + 2} $, we have the compact embedding $ L^{\frac{2 \gamma}{\gamma + 1}}(B) \subset \subset W^{-1,2}(B) $, which by \cite[Lemma 6.4]{NS2004} implies
\begin{alignat*}{4}
	\vr^\delta \bu^\delta & \to \vr \bu, \quad && \text{strongly} \quad && \text{in } L^2(0,T; W^{-1,2}(B)).
\end{alignat*}
Then one infers from the weak convergence of $ \bu^\delta $ in $ L^2(0,T; W^{1,q}) $, $ 1 \leq q < 2 $ that
\begin{alignat*}{4}
	\vr^\delta \bu^\delta \otimes \bu^\delta & \to \vr \bu \otimes \bu, \quad && \text{weakly} \quad && \text{in } L^1((0,T) \times B).
\end{alignat*}

\subsubsection{Convergence of the pressure term in momentum equation}
To handle the approximate pressure in the momentum equation, we first conclude an observation by the uniform estimates.
\begin{lemma}
	Under the assumption of Theorem \ref{thm:weak}, there exists an integrable function $ \overline{p(\vr)} $ such that
	\begin{alignat*}{4}
		p_\delta(\vr) & \to \overline{p(\vr)}, \quad && \text{weakly} \quad && \text{in } L^1(D),
	\end{alignat*}
	up to a non-relabeled subsequence for any $ D \subset \subset \bigcup_{t \in [0,T]} \{t\} \times (\overline{B} \setminus \Gamma^{w^\delta}(t)) $. Additionally, for $ \varepsilon > 0 $ arbitrary, there are a $ \varepsilon_0 > 0 $ and a measurable set $A_\varepsilon \subset \subset (0,T) \times B$ such that for all $\varepsilon \leq \varepsilon_0$ the following holds
	\begin{equation}
		\label{eqs:limit-p}
		A_\varepsilon \cap (\Gamma^{w^\delta}_T) = \emptyset, \quad 
		\int_{((0,T) \times B) \setminus A_\varepsilon} \overline{p(\vr)} \dxdt \leq \varepsilon.
	\end{equation}
\end{lemma}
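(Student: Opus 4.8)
The plan is to extract two facts from the uniform estimates already at hand. First, from the energy inequality \eqref{eqs:weak-energy-inequality-ext} and the uniform bounds it yields, $\vr^\delta$ is bounded in $L^\infty(0,T;L^\gamma(B))$ and $\delta^{1/\beta}\vr^\delta$ in $L^\infty(0,T;L^\beta(B))$, so $\delta(\vr^\delta)^\beta$ is bounded in $L^\infty(0,T;L^1(B))$. Combined with the local improved integrability \eqref{eqs:Bogovskii-D}, namely $\int_D((\vr^\delta)^{\gamma+\theta}+\delta(\vr^\delta)^{\beta+\theta})\dxdt\le C(D)$ for every $D\subset\subset\bigcup_{t\in[0,T]}\{t\}\times(\Bar B\setminus\Gamma^{w^\delta}(t))$, the sequence $\{p_\delta(\vr^\delta)\}_\delta=\{(\vr^\delta)^\gamma+\delta(\vr^\delta)^\beta\}_\delta$ is bounded in $L^{1+\theta/\gamma}(D)$ (using $\theta$ possibly smaller on the $\beta$-term), hence uniformly integrable on such $D$. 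By the Dunford--Pettis theorem, along a non-relabeled subsequence $p_\delta(\vr^\delta)\rightharpoonup\overline{p(\vr)}$ weakly in $L^1(D)$. A diagonal argument over an exhausting sequence $D_k$ of such sets produces a single subsequence and a single limit $\overline{p(\vr)}\in L^1_{loc}(\bigcup_t\{t\}\times(\Bar B\setminus\Gamma^{w^\delta}(t)))$ working simultaneously on all $D$; I would also record $\overline{p(\vr)}\ge 0$ a.e.

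Second, the concentration-exclusion statement \eqref{eqs:limit-p} is the limiting version of the equi-integrability Lemma \ref{lem:equi-integrability-with}. I would argue as follows: fix $\varepsilon>0$; apply Lemma \ref{lem:equi-integrability-with} with $\varepsilon/2$ to obtain $\varepsilon_0>0$ and a compact set $A_\varepsilon\subset\subset(0,T)\times B$ with $A_\varepsilon\cap\Gamma^{w^\delta}_T=\emptyset$ (note this set is constructed from the tubular neighbourhood of $\Gamma^{w^\delta}(t)$, uniformly in $\delta$, which is why it can be taken $\delta$-independent) such that $\int_{((0,T)\times B)\setminus A_\varepsilon}p_\delta(\vr^\delta)\dxdt\le\varepsilon/2$ for all $\delta$ with $\varepsilon\le\varepsilon_0$. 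Since $((0,T)\times B)\setminus A_\varepsilon$ need not sit inside a single admissible $D$, I would instead test against a cutoff: pick $\phi\in C_c^\infty(((0,T)\times B)\setminus\overline{A_{\varepsilon'}})$ for $\varepsilon'$ slightly larger than $\varepsilon$ with $0\le\phi\le 1$ and $\phi\equiv 1$ on a neighbourhood of $((0,T)\times B)\setminus A_\varepsilon$ intersected with the complement of a fattened $\Gamma^{w^\delta}_T$; the support of $\phi$ lies in an admissible $D$, so weak $L^1(D)$ convergence gives $\int \overline{p(\vr)}\phi\,=\lim_\delta\int p_\delta(\vr^\delta)\phi\le\varepsilon/2$, and letting the cutoff increase to the indicator of $((0,T)\times B)\setminus A_\varepsilon$ (monotone convergence, using $\overline{p(\vr)}\ge0$) yields $\int_{((0,T)\times B)\setminus A_\varepsilon}\overline{p(\vr)}\dxdt\le\varepsilon$.

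The main obstacle I anticipate is purely bookkeeping around the moving boundary: the set $((0,T)\times B)\setminus A_\varepsilon$ straddles $\Gamma^{w^\delta}_T$ (on the far side $\vr^\delta\equiv 0$ and on the near side one has \eqref{eqs:Bogovskii-D}), so neither the weak-$L^1$ convergence on a fixed $D$ nor a single global statement applies directly; one must localize, pass to the limit on compact pieces away from the interface, and then recombine using the smallness from Lemma \ref{lem:equi-integrability-with} and the fact that $\overline{p(\vr)}$ inherits the property of vanishing on the exterior (which follows from $\vr^\delta_{|B\setminus\Omega^{w^\delta}}(t)=0$, already noted in the excerpt, together with the strong convergence \eqref{eqs:Phi_w-strong-convergence}). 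Once the localization is set up, the two assertions are immediate consequences of Dunford--Pettis plus Lemma \ref{lem:equi-integrability-with}, so I would keep the write-up short, referencing \cite{BS2018,KMN2023,MMNRT2022} for the analogous arguments.
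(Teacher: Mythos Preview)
Your proposal is correct and follows essentially the same approach as the paper: the first assertion is deduced from the local higher integrability \eqref{eqs:Bogovskii-D} (weak $L^1$ compactness on each $D$), and the second is the limiting version of Lemma \ref{lem:equi-integrability-with}. The paper's proof is much terser---it first records the auxiliary fact $\delta(\vr^\delta)^\beta\to 0$ strongly in $L^1((0,T)\times B)$ (a direct consequence of \eqref{eqs:Bogovskii-D} and \eqref{eqs:equi-integrability}) and then simply cites \eqref{eqs:Bogovskii-D} and \eqref{eqs:equi-integrability} for the two assertions---whereas you spell out the Dunford--Pettis/diagonal extraction and the cutoff-plus-monotone-convergence passage to the limit, which is a reasonable elaboration of the same argument.
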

\begin{proof}
	First it follows that, as a direct consequence of \eqref{eqs:Bogovskii-D} and \eqref{eqs:equi-integrability},
	\begin{alignat*}{4}
		\delta \vr^\beta & \to 0, \quad && \text{strongly} \quad && \text{in } L^1((0,T) \times B).
	\end{alignat*}
	Then the first assertion follows directly from \eqref{eqs:Bogovskii-D}, while \eqref{eqs:equi-integrability} implies the second one.
\end{proof}

\subsubsection{An effective viscous flux identity}
In the following, we are devoted to proving some compactness of $ \vr^\delta $ so that one can identify the weak limit pressure term $ \overline{p(\vr)} $ with $ p(\vr) $. To this end, one follows similar ideas as in the general framework of compressible Navier--Stokes equations \cite{FN2017,NS2004}, with corresponding adaptations in compressible fluid-structure interaction problems \cite{BS2018,KMN2023,MMNRT2022}. Namely, we will make use of the so-called \emph{effective viscous flux}, which provides more information than a single term.

We define the $ L^\infty $-truncation
\begin{equation*}
	T_k(z) \coloneqq k T \left(\frac{z}{k}\right) \quad z \in \bbr, k \in \bbn,
\end{equation*}
where $ T $ is a smooth concave function on $ \bbr $ such that $ T(z) = z $ for $ z \in [0,1) $ and $ T(z) = 2 $ for $ z \geq 3 $. Then the \emph{effective viscous flux identity} is stated as follows, exactly similar to \cite{BS2018,KMN2023}. We omit the proof here.
\begin{lemma}
	\label{lem:effective-viscous-flux-identity}
	Up to a non-relabeled subsequence, the following identity holds
	\begin{equation}
		\label{eqs:effective-viscous-flux-identity}
		\int_{Q_T^{w^\varepsilon}} \left(
			p_\delta(\vr^\delta) - (\lambda + 2 \mu) \Div \bu^\delta
		\right) T_k(\vr^\delta) \dxdt
		\to 
		\int_{Q_T^{w}} \left(
		\overline{p(\vr)} - (\lambda + 2 \mu) \Div \bu^\delta
		\right) \overline{T_k(\vr)} \dxdt,
	\end{equation}
        as $\delta \to 0$ {for a fixed $k$}.
\end{lemma}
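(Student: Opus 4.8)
The plan is to follow the by-now classical commutator argument of Lions and Feireisl--Novotný, adapted to the present time-dependent geometry and the artificial pressure setting, exactly as it was carried out in the compressible fluid-structure works \cite{BS2018,KMN2023,MMNRT2022}. The idea is to test the approximate momentum equation \eqref{eqs:weak-momentum-ext} (extended to the ball $B$, where $\bu^\delta \in L^2 H_0^2(B)$) with a test function built from the inverse divergence (Riesz-type) operator applied to $T_k(\vr^\delta)$, and to test the limit momentum equation with the analogous function built from the weak limit $\overline{T_k(\vr)}$. Concretely, one sets $\bphi^\delta(t,x) = \zeta(t,x)\, \nabla \Delta^{-1}[\mathbbm{1}_B T_k(\vr^\delta)]$ with $\zeta$ a smooth cutoff localizing away from $\Gamma^{w^\delta}_T$ (legitimate precisely because of the equi-integrability Lemma \ref{lem:equi-integrability-with}, which confines the pressure concentration to a small set $A_\varepsilon$ disjoint from $\Gamma^{w^\delta}_T$), and correspondingly $\bphi(t,x) = \zeta(t,x)\,\nabla \Delta^{-1}[\mathbbm{1}_B \overline{T_k(\vr)}]$. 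Since the structure test function $\psi$ can be taken to vanish near $\Gamma_T$ (the cutoff kills the boundary trace), the shell terms and the Navier-slip boundary integrals in \eqref{eqs:weak-momentum-ext} drop out and the argument reduces to the interior one.

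First I would record the requisite convergences for the operator $\nabla\Delta^{-1}$ applied to $T_k(\vr^\delta)$: from $T_k(\vr^\delta) \to \overline{T_k(\vr)}$ weakly-$*$ in $L^\infty((0,T)\times B)$ (bounded sequence, by definition of $T_k$) and the uniform continuity in time of $T_k(\vr^\delta)$ in some negative Sobolev space coming from the renormalized continuity equation \eqref{eqs:weak-renormal-ext}, one gets, via Aubin--Lions and the compact embedding $L^p(B)\subset\subset W^{-1,2}(B)$, the strong convergence $\nabla\Delta^{-1}[\mathbbm{1}_B T_k(\vr^\delta)] \to \nabla\Delta^{-1}[\mathbbm{1}_B \overline{T_k(\vr)}]$ in $C([0,T];L^2_{loc})$, while $\Delta^{-1}\partial_j(\cdots)$ gains a derivative and converges weakly in $L^2 H^1_{loc}$. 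Combined with $\vr^\delta\bu^\delta \to \vr\bu$ strongly in $L^2 W^{-1,2}(B)$ (already established in the excerpt) and $\bu^\delta\rightharpoonup\bu$ in $L^2 W^{1,q}$, these allow one to pass to the limit in every term of the tested momentum equation except the pressure--divergence pairing. The crucial point, as always, is the convective term: here one invokes the div--curl type lemma (or, in the Feireisl presentation, the commutator lemma for $\mathcal{R}_{ij} = \partial_i\Delta^{-1}\partial_j$) to show that
\[
\lim_{\delta\to 0}\int \vr^\delta u^\delta_i u^\delta_j\, \mathcal{R}_{ij}[\mathbbm{1}_B T_k(\vr^\delta)]
= \int \vr u_i u_j\, \mathcal{R}_{ij}[\mathbbm{1}_B \overline{T_k(\vr)}],
\]
which relies on $\vr^\delta\bu^\delta$ being precompact in a negative-order space and on the $W^{1,r}$-commutator estimate for $\mathcal{R}$ acting on a strongly convergent factor.

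Subtracting the two identities and letting the localization exhaust $Q_T^{w^\delta}$ resp. $Q_T^w$ (again using \eqref{eqs:equi-integrability} and the integrability tail bound \eqref{eqs:limit-p} to control the error committed by the cutoff $\zeta$ uniformly in $\delta$) yields \eqref{eqs:effective-viscous-flux-identity}. The main obstacle is precisely this localization step: unlike the fixed-domain case, the fluid domain $\Omega^{w^\delta}(t)$ moves and is only Hölder (not uniformly Lipschitz) in time, so the Bogovskiĭ/inverse-divergence machinery cannot be applied on the whole domain; one must work on interior subdomains $D\subset\subset\bigcup_t\{t\}\times(\overline B\setminus\Gamma^{w^\delta}(t))$ and then show the contributions near the moving boundary are negligible — which is exactly what the equi-integrability Lemma \ref{lem:equi-integrability-with} and the range restrictions $\gamma>\tfrac{12}{7}$ (for $\nu_s=0$, $\Omega\in\bbtwo$) resp. $\gamma>\tfrac{3}{2}$ (for $\nu_s>0$) are designed to provide. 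Since all of these ingredients are already available in the cited references with only cosmetic changes forced by the Navier-slip condition, the proof is a routine adaptation, and this is why the excerpt omits it.
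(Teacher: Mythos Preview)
Your proposal is correct and follows exactly the approach the paper has in mind: the paper omits the proof entirely and simply refers to \cite{BS2018,KMN2023}, whose argument is precisely the localized Lions--Feireisl commutator scheme you outline (test with $\zeta\,\nabla\Delta^{-1}T_k(\vr^\delta)$, use the equi-integrability Lemma~\ref{lem:equi-integrability-with} to control the near-boundary error, and pass to the limit via the div--curl/commutator lemma). Your identification of why the Navier-slip and shell terms disappear under the interior cutoff, and of the role of the $\gamma$-restrictions, is accurate.
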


\subsubsection{Compactness of the density}
By means of the renormalized formulation of the continuity equation, we show the strong convergence of the density. Let $L_k(z) = z \ln z $ for $0 \leq z < k$ and $L_k(z) = z \int_k^z \frac{T_k(s)}{s^2} \,\d s$ for $z \geq k$ with $z L_k'(z) - L_k(z) = T_k(z)$. On noting that $\vr^\delta$ and $\vr$ are defined zero outside $\Omega^{w^\delta}$ and $\Omega^w$ respectively, the functions $L_k(\vr^\delta)$ and $L_k(\vr)$ vanish outside $\Omega^{w^\delta}$ and $\Omega^w$ respectively as well. 
Invoking the renormalized formulation of the continuity equation \eqref{eqs:weak-renormal-ext}, we have
\begin{equation*}
    \begin{aligned}
        \int_{\bbr^3} (L_k(\vr^\delta) - L_k(\vr))(t) \dx
        & = \int_0^t \int_{\bbr^3} (T_k(\vr) \Div \bu - \overline{T_k(\vr)} \Div \bu^\delta) \dxdtau \\
        & \quad 
        + \int_0^t \int_{\bbr^3} (\overline{T_k(\vr)} - T_k(\vr^\delta)) \Div \bu^\delta \dxdtau,
    \end{aligned}
\end{equation*}
for all $t \in [0,T]$. In view of the \emph{effective viscous flux identity} \eqref{eqs:effective-viscous-flux-identity}, with exactly the same argument as in \cite{BS2018,KMN2023,MMNRT2022}, one is able to derive the strong convergence of the density
\begin{equation*}
    \vr^\delta \to \vr,  \quad \text{a.e.~in } (0,T) \times \bbr^3. 
\end{equation*}

Combining all the ingredients above, together with the verification of energy inequality, the attainment of initial data, and the maximal interval of existence, cf. \cite[Section 6.1.7--6.1.10]{KMN2023}, we finally prove Theorem \ref{thm:weak}.
\qed

\section{Relative Entropy and Singular Limits}
\label{sec:singular-limits}
This section is devoted to providing a rigorous justification of the \textit{incompressible inviscid limit} of our compressible fluid-structure interaction problem to an Euler-plate system, in the regime of \textit{low Mach number} and \textit{high Reynolds number}. In the following, we first present a formal rescaling argument in Section \ref{sec:rescaled-system} to derive the limit system desired in Section \ref{sec:target-system}. Then we derive a modified relative energy inequality under the current framework in Section \ref{sec:relative-energy}, which entails the uniform-in-$ \varepsilon $ (and $ \nu $) bounds in Section \ref{sec:uniform-bounds}. To compare the original system, we introduce the basic transform between two variant domains in Section \ref{sec:tranformation}. Finally, the limit passage is justified in Section \ref{sec:singular-limits-final}.

Note that we will investigate the case of slab-like geometry $\Omega = \Gamma \times (0,1)$, i.e., the reference geometry is flat, see Figure \ref{fig:flat-geometry}.
\begin{figure}[ht!]
	\centering
    \begin{tikzpicture}[line width=0.5pt,scale=0.9]
		\begin{scope}[declare function={
		  	f(\x)=.2*sin(deg(\x*pi/2))+2;
		},scale=1.4]
			\definecolor{aqua}{rgb}{0.0, 1.0, 1.0}
			
		  	\draw[-stealth] (-.5,0) -- (4.5,0) node[right] {};
		  	\draw[-stealth] (0,-.25) -- (0,2.75);
		  	\draw[blue,domain = 0:4,samples=100] plot(\x, {f(\x)});
			
			\fill[aqua!20] plot[domain = 0:4,samples=100](\x, {f(\x)}) -- (4,0) -- (0,0) -- (0,2);
				
		  	\draw[dashed] (0,2) node[left] {1} -- (4,2);
						
		  	\node[below left] at (0,0) {$ 0 $};

            \node[below] at (4,0) {$ \Gamma_0 = \bbt^2 \times \{0\} $};
		\end{scope}
    \end{tikzpicture}
    \caption{A slab-like geometry.}
    \label{fig:flat-geometry}
\end{figure}

\subsection{Rescaled system}
\label{sec:rescaled-system}
First, let us introduce the scaled equations of our problem, in terms of the Mach number $ \varepsilon > 0 $ and Reynolds number $ 1/\nu $ with $ \nu > 0 $. Denote by $ (\vr^\varepsilon, \bu^\varepsilon, w^\varepsilon) $ the weak solution depending on $ \varepsilon $ (also depending on $\nu$, which is ignored in the notation for the sake of readability) satisfying 
\begin{subequations}
	\label{eqs:FSI-model-rescale}
	\begin{alignat}{3}
		\pt \vr^\varepsilon + \Div (\vr^\varepsilon \bu^\varepsilon) & = 0, && \tin Q_T^{w^\varepsilon}, \\ 
		\pt (\vr^\varepsilon \bu^\varepsilon) + \Div (\vr \bu^\varepsilon \otimes \bu^\varepsilon) + \frac{1}{\varepsilon^2} \nabla (p(\vr^\varepsilon) - p(\bar{\vr})) & = \nu \Div \bbs(\nabla \bu^\varepsilon), && \tin Q_T^{w^\varepsilon}, \\
		\pt^2 w^\varepsilon + \Delta^2 w^\varepsilon - \nu_s \Delta \pt w^\varepsilon & = \mathcal{F}^{\varepsilon}, && \ton \Gamma_T, \\
		(\bu^\varepsilon \cdot \bn^{w^\varepsilon}) \circ \bPhi_{w^\varepsilon}
		& = \pt w^\varepsilon, && \ton \Gamma_T, \\
		\nu (\bbs(\nabla \bu^\varepsilon) \bn^{w^\varepsilon})_{\btau^{w^\varepsilon}} \circ \bPhi_{w^\varepsilon} & = - \alpha \nu \bu^\varepsilon_{\btau^{w^\varepsilon}} \circ \bPhi_{w^\varepsilon}, && \ton \Gamma_T, \\
        - \bu^\varepsilon \cdot \be_3 = 0, \quad
        \nu (\bbs(\nabla \bu^\varepsilon) \be_3 + \alpha_0 \bu^\varepsilon)_{\btau^0} & = 0, && \ton \Gamma_T^0,
	\end{alignat}
where $ \bar{\vr} > 0 $ is a constant and the external force on the plate from the fluid is 
\begin{equation}
    \label{eqs:F-scaled}
    \mathcal{F}^\varepsilon(\vr^\varepsilon, \bu^\varepsilon) = - \nu \bbs(\nabla \bu^\varepsilon) \bn^{w^\varepsilon} \circ \bPhi_{w^\varepsilon} \cdot \bn 
    + \frac{p(\vr^\varepsilon) - p(\bar{\vr})}{\varepsilon^2} , \ton \Gamma_T.
\end{equation}
The system is supplemented with the initial data
\begin{equation}
	\label{eqs:SL-initial-data}
	\begin{aligned}
		\vr^\varepsilon(0, \cdot) & = \vr^\varepsilon_0 \coloneqq \bar{\vr} + \varepsilon \vr_0^{\varepsilon, 1}, \\
		\bu^\varepsilon(0, \cdot) & = \bu^\varepsilon_0, \quad 
		w^\varepsilon(0, \cdot) = w^\varepsilon_0, \quad 
		\pt w^\varepsilon(0, \cdot) = w^\varepsilon_1.
	\end{aligned}
\end{equation}
\end{subequations}
In this case, we know $\bn^{w^\varepsilon} 
    = (- \nabla w^\varepsilon, 1)^\top$.

To obtain the scaled system \eqref{eqs:FSI-model-rescale}, we consider the characteristic values 
$U_{f}$, $U_{s}$, $\vr_{f}$, $\vr_{s}$, $p_{f}$, $L$, $T_{f}$, $T_{s}$, $\nu_{f}$, $\nu_{s}$, $W$, $E$, such that
\begin{equation*}
	U_f = \frac{L}{T_f}, \quad
    U_s = \frac{W}{T_s}.
\end{equation*}
The Mach number and Reynolds number are
\begin{equation*}
	\mathrm{Ma} = \frac{U_{f}}{\sqrt{p_{f}/\vr_{f}}} = \varepsilon, \quad 
	\mathrm{Re} = \frac{\vr_{f} U_{f} L}{\nu_{f}} = \frac{1}{\nu},
\end{equation*}
which implies that
\begin{equation*}
	\vr_{f} U_{f}^2 = \mathrm{Ma}^2 p_f, \quad
	\vr_{f} U_{f} L = \mathrm{Re} \nu_{f}.
\end{equation*}
Define new valuables by the characteristic values,
\begin{gather*}
	\vr^\varepsilon(t,x) = \frac{\tilde{\vr}(t T_f,x L)}{\vr_f},\ 
	\bu^\varepsilon(t,x) = \frac{\tilde{\bu}(t T_f,x L)}{U_f},\ 
	w^\varepsilon(t,x) = \frac{\tilde{w}(t T_s,x L)}{W},\\ 
	\mu = \frac{\tilde{\mu}}{N_f},\ 
	\lambda = \frac{\tilde{\lambda}}{N_f},\ 
	\nu_s = \frac{\tilde{\nu}_s}{N_s},\ 
	\alpha = \frac{\tilde{\alpha}}{N_f/L}, \ 
	\alpha_0 = \frac{\tilde{\alpha}_0}{N_f/L},
\end{gather*}
where $ (\tilde{\vr}, \tilde{\bu}, \tilde{w}) $ satisfies the strong formulation
\begin{equation*}
	\left\{
	\begin{alignedat}{3}
		& \pt \tilde{\vr} + \Div (\tilde{\vr} \tilde{\bu}) = 0, && \tin Q_T^{\tilde{w}}, \\ 
		& \pt (\tilde{\vr} \tilde{\bu}) + \Div (\tilde{\vr} \tilde{\bu} \otimes \tilde{\bu}) + \nabla p(\tilde{\vr}) = \Div \widetilde{\bbs}(\nabla \tilde{\bu}), && \tin Q_T^{\tilde{w}}, \\
		& \vr_s h \pt^2 \tilde{w} 
		+ {h^3 E} \Delta^2 \tilde{w} - \tilde{\nu}_s \Delta \pt \tilde{w} = - \big(\widetilde{\bbs}(\nabla \tilde{\bu}) \bn^{\tilde{w}} \big) \circ \bPhi_{\tilde{w}} \cdot \bn + (p(\tilde{\vr}) - p(\bar{\vr})), && \ton \Gamma_T, \\
		& (\tilde{\bu} \cdot \bn^{\tilde{w}}) \circ \bPhi_{\tilde{w}}
		= \pt \tilde{w}, \quad
        (\widetilde{\bbs}(\nabla \tilde{\bu}) \bn^{\tilde{w}} + \tilde{\alpha} (\tilde{\bu} - \pt \tilde{w} \bn \circ \bPhi_{\tilde{w}}^{-1}))_{\btau^{\tilde{w}}} \circ \bPhi_{\tilde{w}} 
		= 0, && \ton \Gamma_T, \\
        & \tilde{\bu} \cdot \be_3 = 0, \quad
        (\widetilde{\bbs}(\nabla \tilde{\bu}^\varepsilon) \be_3 + \tilde{\alpha}_0 \tilde{\bu}^\varepsilon)_{\btau^0} = 0, && \ton \Gamma_T^0, 
	\end{alignedat}
	\right.
\end{equation*}
with $ \widetilde{\bbs}(\nabla \tilde{\bu}) =  \tilde{\mu} (\nabla \tilde{\bu} + \nabla \tilde{\bu}^\top) + \tilde{\lambda} \Div \tilde{\bu} \bbi $. Here the constant $ \vr_s, h, E $ denotes the density, the thickness and Young's modulus of the elastic structure.
Then we get the system for $ (\vr^\varepsilon, \bu^\varepsilon, w^\varepsilon) $
	\begin{alignat*}{3}
            & \frac{\vr_{f}}{T_{f}} \pt \vr^\varepsilon + \frac{\vr_{f} U_{f}}{L} \Div (\vr^\varepsilon \bu^\varepsilon) = 0, && \tin Q_T^{w^\varepsilon}, \\ 
            & \frac{\vr_{f} U_{f}}{T_{f}} \pt (\vr^\varepsilon \bu^\varepsilon) + \frac{\vr_{f} U_{f}^2}{L} \Div (\vr \bu^\varepsilon \otimes \bu^\varepsilon) + \frac{p_{f}}{L} \nabla (p(\vr^\varepsilon) - p(\bar{\vr})) = \frac{U_{f} \nu_{f}}{L^2} \Div \bbs(\nabla \bu^\varepsilon), && \tin Q_T^{w^\varepsilon}, \\
            & \frac{\vr_{s} W^2}{T_{s}^2} \pt^2 w^\varepsilon 
            + \frac{W^4 E}{L^4} \Delta^2 w^\varepsilon - \frac{W N_{s}}{L^2 T_{s}} \nu_s \Delta \pt w^\varepsilon = \mathcal{F}^\varepsilon, && \ton \Gamma_T, \\
            & \mathcal{F}^\varepsilon = \frac{U_{f} \nu_{f}}{L} \big(\bbs(\nabla \bu^\varepsilon)  \bn^{w^\varepsilon} \big) \circ \bPhi_{w^\varepsilon}\cdot \bn^{w^\varepsilon}
            + p_{f}(p(\vr^\varepsilon) - p(\bar{\vr})), && \ton \Gamma_T, \\
            & (\bu^\varepsilon \cdot \bn^{w^\varepsilon}) \circ \bPhi_{w^\varepsilon}
		  = \pt w^\varepsilon, && \ton \Gamma_T, \\
            & \frac{U_{f} \nu_{f}}{L} (\bbs(\nabla \bu^\varepsilon) \bn^{w^\varepsilon})_{\btau^{w^\varepsilon}} \circ \bPhi_{w^\varepsilon}
            + \frac{\alpha N_f U_f}{L} \bu^\varepsilon_{\btau^{w^\varepsilon}} \circ \bPhi_{w^\varepsilon}
            = \frac{\alpha N_f W}{T_s L} (\pt w^\varepsilon \bn \circ \bPhi_{w^\varepsilon}^{-1})_{\btau^{w^\varepsilon}} \circ \bPhi_{w^\varepsilon}, && \ton \Gamma_T, \\
		  & \bu^\varepsilon \cdot \be_3 = 0, \quad
            \frac{U_{f} \nu_{f}}{L} (\bbs(\nabla \bu^\varepsilon) \bn^{w^\varepsilon} + \alpha_0 \bu^\varepsilon)_{\btau^0} \circ \bPhi_{w^\varepsilon}
            = 0, && \ton \Gamma_T,
	\end{alignat*}
by setting $ h = W $, meaning the thickness is of the same order of characteristic displacement.
Multiplying the first equation with $ \frac{T_f}{\vr_f} $ and the rest with $ \frac{1}{\vr_{f} U_{f}^2} $, one ends up with
	\begin{alignat*}{3}
		& \pt \vr^\varepsilon + \Div (\vr^\varepsilon \bu^\varepsilon) = 0, && \tin Q_T^{w^\varepsilon}, \\ 
		& \pt (\vr^\varepsilon \bu^\varepsilon) + \Div (\vr \bu^\varepsilon \otimes \bu^\varepsilon) + \frac{1}{\mathrm{Ma}^2} \nabla (p(\vr^\varepsilon) - p(\bar{\vr})) = \frac{1}{\mathrm{Re}} \Div \bbs(\nabla \bu^\varepsilon), && \tin Q_T^{w^\varepsilon}, \\
		& \frac{\vr_{s} U_s^2}{\vr_{f} U_f^2} \pt^2 w^\varepsilon 
		+ \frac{W^4 E}{L^4 \vr_{f} U_f^2} \Delta^2 w^\varepsilon 
		- \frac{W N_{s}}{L^2 T_{s} \vr_f U_f^2} \nu_s \Delta \pt w^\varepsilon = \mathcal{F}^\varepsilon, && \ton \Gamma_T, \\
		& \mathcal{F}^\varepsilon = \frac{1}{\mathrm{Re}} \big(\bbs(\nabla \bu^\varepsilon) \bn^{w^\varepsilon}\big) \circ \bPhi_{w^\varepsilon} \cdot \bn
        + \frac{1}{\mathrm{Ma}^2} (p(\vr^\varepsilon) - p(\bar{\vr})), && \ton \Gamma_T, \\
		& (\bu^\varepsilon \cdot \bn^{w^\varepsilon}) \circ \bPhi_{w^\varepsilon}
		= \pt w^\varepsilon, && \ton \Gamma_T, \\
		& \frac{1}{\mathrm{Re}} (\bbs(\nabla \bu^\varepsilon) \bn^{w^\varepsilon})_{\btau^{w^\varepsilon}} \circ \bPhi_{w^\varepsilon}
		= - \frac{\alpha}{\mathrm{Re}} (\bu^\varepsilon - \pt w^\varepsilon \bn \circ \bPhi_{w^\varepsilon}^{-1})_{\btau^{w^\varepsilon}} \circ \bPhi_{w^\varepsilon}, && \ton \Gamma_T^0, \\
		  & \bu^\varepsilon \cdot \be_3 = 0, \quad
            \frac{1}{\mathrm{Re}} (\bbs(\nabla \bu^\varepsilon) \bn^{w^\varepsilon} + \alpha_0 \bu^\varepsilon)_{\btau^0} \circ \bPhi_{w^\varepsilon}
            = 0, && \ton \Gamma_T,.
	\end{alignat*}
{If it is further assumed that $ \frac{\vr_{s} U_s^2}{\vr_{f} U_f^2} = \frac{W^4 E}{L^4 \vr_{f} U_f^2} = \frac{W N_{s}}{L^2 T_{s} \vr_f U_f^2} = 1 $, we recover the scaled system \eqref{eqs:FSI-model-rescale}. 
}

\begin{remark}
    
An alternative method is to define the following rescaled new valuables with the help of $ (\tilde{\vr}, \tilde{\bu}, \tilde{w}) $ for $ \varepsilon, \nu > 0 $:
\begin{gather*}
	\tilde{\vr}(t,x) = \vr^\varepsilon(\varepsilon t, x),\ 
	\tilde{\bu}(t,x) = \varepsilon \bu^\varepsilon(\varepsilon t, x),\ 
	\tilde{w}(t,x) = w^\varepsilon(\varepsilon t, x),\\
	\tilde{\mu} = \varepsilon \nu \mu,\ 
	\tilde{\lambda} = \varepsilon \nu \lambda,\ 
	\tilde{\nu}_s = \varepsilon \nu_s,\ 
	\tilde{\alpha} = \varepsilon^2 \nu \alpha,\
        \tilde{\alpha}_0 = \varepsilon^2 \nu \alpha_0,\
	\vr_s h = 1,\ 
	h^3 E = \varepsilon^2. 
\end{gather*} 
\end{remark}

\begin{remark}[Extra pressure]
    Note that in the scaled system \eqref{eqs:FSI-model-rescale}, the dynamic condition \eqref{eqs:F-scaled} contains an additional pressure term $p(\bar{\vr})$. There are two reasons behind this consideration. On the one hand, to pass to the limit of low Mach number, we need to compare the pressure to a constant ``pressure'' (rewrite $\nabla p(\vr)$ as $\nabla(p(\vr) - p(\bar{\vr}))$). On the other hand, in this case, the fluid-structure system is surrounded by air with a constant ``pressure'', which is reasonable from the physical point of view.
\end{remark}

\subsection{Target system (Euler-plate)}
\label{sec:target-system}
Motivated by \cite{Hoff1998}, now we proceed with a formal expansion of $ (\vr^\varepsilon, \bu^\varepsilon, w^\varepsilon, \pt w^\varepsilon) $ as
\begin{align*}
	& \vr^\varepsilon(x,t) = \bar{\vr} + \varepsilon^2 \tilde{\Pi}(x,t) + \zeta_1^\varepsilon(x,t),\\
	& \bu^\varepsilon(x,t) = \tilde{\bv}(x,t) + \zeta_2^\varepsilon(x,t),\\
	& w^\varepsilon(x,t) = \eta(x,t) + \zeta_3^\varepsilon(x,t),\\
	& \pt w^\varepsilon(x,t) = \pt \eta(x,t) + \zeta_4^\varepsilon(x,t),
\end{align*}
where $ \zeta_1^\varepsilon(x,t) = o(\varepsilon^2) $, $ \zeta_2^\varepsilon(x,t) = o(1) $, $ \zeta_3^\varepsilon(x,t) = o(1) $, $ \zeta_4^\varepsilon(x,t) = o(1) $, as $ \varepsilon \to 0 $.
Substituting above expansions into \eqref{eqs:FSI-model-rescale}, one could anticipate the limit system as $ \varepsilon \to 0 $ and $ \nu \to 0 $ to be
\begin{subequations}
	\label{eqs:EulerPlate}
	\begin{alignat}{3}
		\bar{\vr} (\pt \tilde{\bv} + \tilde{\bv} \cdot \nabla \tilde{\bv}) + \nabla \tilde{\Pi} & = 0, && \tin Q_{T'}^\eta, \\
		\Div \tilde{\bv} & = 0, && \tin Q_{T'}^\eta, \\ 
		\pt^2 \eta + \Delta^2 \eta - \nu_s \Delta \pt \eta & = \tilde{\Pi} \circ \bPhi_{\eta}, && \ton \Gamma_{T'}, \\
		(\tilde{\bv} \cdot \bn^\eta) \circ \bPhi_\eta
		& = \pt \eta, && \ton \Gamma_{T'}, \\
		\tilde{\bv} \cdot \be_3 & = 0, && \ton \Gamma_{T'}^0, \\
		(\tilde{\bv}, \eta, \pt \eta)(\cdot, 0)
		& = (\bv_0 \circ \inv{\bPsi_{0}}, \eta_0, \eta_1), && \tin \Omega^{\eta_0},
	\end{alignat}
\end{subequations}
for $ T' > 0 $ the existence time. 
The above system \eqref{eqs:EulerPlate} was investigated by Kukavica--Tuffaha \cite{KT2022} with a periodic slab-like geometry, and it is shown that \eqref{eqs:EulerPlate} admits a unique strong solution in $(0,T')$ for some small $ T' > 0 $. For simplicity, we assume
\begin{equation}
	\label{eqs:regularity-v-tilde}
	\begin{aligned}
		\tilde{\bv} & \in C^1([0,T']; C^2(\Omega^\eta(t))), \\
		\tilde{\Pi} & \in C^1([0,T']; C^1(\Omega^\eta(t))), \\
		\eta & \in C^2([0,T']; C^3(\Gamma)).
	\end{aligned}
\end{equation}
This is one of the motivations that we want to investigate the limiting behavior of our compressible fluid-structure interaction problems, providing a unique suitable regular strong solution.

\subsection{Scaled relative energy inequality}
\label{sec:relative-energy}
There are different ways to justify the singular limits in the context of compressible fluids, for example, uniform estimates with compactness argument, energy methods for strong solutions, as well as relative entropy methods, which are endowed with different properties. As a starting point to investigating the singular limits for fluid-structure interaction problems, we will employ modified relative entropy methods. 
Before approaching the limit passage, we recall the relative energy depending on the Mach number $\varepsilon$ and Reynolds number $\frac{1}{\nu}$. Following e.g. \cite{FN2017,FKNZ2016}, we define
\begin{equation*}
	\begin{aligned}
		\cE \big( &(\vr^\varepsilon, \bu^\varepsilon, w^\varepsilon)|(r, \bU, W) \big)(t) \\
		& \coloneqq \int_{\Omega^{w^\varepsilon}(t)} 
		\onehalf \vr^\varepsilon \abs{\bu^\varepsilon - \bU}^2 (t) \dx
		+ \frac{1}{\varepsilon^2 (\gamma - 1)} \int_{\Omega^{w^\varepsilon}(t)} \left( p(\vr^\varepsilon) - p'(r) (\vr^\varepsilon - r) - p(r) \right)(t) \dx \\
		& \quad + \int_{\Gamma} \left( \onehalf \abs{\pt w^\varepsilon - \pt W}^2 + \onehalf \abs{\Delta w^\varepsilon - \Delta W}^2 \right)(t) \dH^2,
	\end{aligned}
\end{equation*}
which consists of the differences in kinetic energy of fluids, pressure entropy, kinetic energy of structure, and elastic energy.
Based on it, one can show the following proposition of relative energy inequality, whose proof is postponed in Appendix \ref{sec:proof-REI}.
\begin{proposition}
	\label{prop:relative-energy}
	Let $ (\vr^\varepsilon, \bu^\varepsilon, w^\varepsilon) $ be a weak solution to \eqref{eqs:FSI-model-rescale} constructed in the sense of the Definition \ref{def:bounded-weak}. Then $ (\vr^\varepsilon, \bu^\varepsilon, w^\varepsilon) $ satisfies the following relative energy inequality 
	\begin{align}
		& \cE \big( (\vr^\varepsilon, \bu^\varepsilon, w^\varepsilon)|(r, \bU, W) \big)(\tau)
		+ \nu \int_{0}^{\tau} \int_{\Omega^{w^\varepsilon}(t)} \big( \bbs(\nabla \bu^\varepsilon) - \bbs(\nabla \bU) \big) : (\nabla \bu^\varepsilon - \nabla \bU) \dxdt 
		\nonumber \\
		& \quad + \alpha \nu \int_{0}^{\tau} \int_{\Gamma^{w^\varepsilon}(t)} \abs{(\bu^\varepsilon - \bU - (\pt w^\varepsilon - \pt W) \bn \circ \bPhi_{w^\varepsilon}^{-1})_{\btau^{w^\varepsilon}}}^2 \dH^2 \dt
		\nonumber \\
		& \quad + \alpha_0 \nu \int_{0}^{\tau} \int_{\Gamma_0} \abs{(\bu^\varepsilon - \bU)_{\btau^0}}^2 \dH^2 \dt
		+ \nu_s \int_{0}^{\tau} \int_{\Gamma} \abs{\nabla \pt w^\varepsilon - \nabla \pt W}^2 \dH^2 \dt
		\nonumber \\
		& \qquad \leq \cE \big( (\vr^\varepsilon, \bu^\varepsilon, w^\varepsilon)|(r, \bU, W) \big)(0) 
		+ \int_{0}^{\tau} \cR(\vr^\varepsilon, \bu^\varepsilon, w^\varepsilon, r, \bU, W)(t) \,\d t,
		\label{eqs:relative-energy-inequality-epsilon}
	\end{align}
	for a.e. $ \tau \in (0,T) $ and any pair of test functions $ (r, \bU, W) $ such that $ \bU \in C_c^\infty(\Bar{Q_T^{w^\varepsilon}}) $, $ W \in L^\infty(0,T; H^2(\Gamma)) \cap W^{1,\infty}(0,T; L^2(\Gamma)) \cap H^1(0,T; H^1(\Gamma)) $, $ \bU \cdot \bn^{w^\varepsilon} = \pt W \circ \bPhi_{w^\varepsilon}^{-1} $ on $ \Gamma^{w^\varepsilon} $, $ r \in C_c^\infty(\Bar{Q_T^{w^\varepsilon}}) $, $ r > 0 $, where the reminder term $ \cR $ is given by
	\begin{align*}
		& \quad \cR(\vr^\varepsilon, \bu^\varepsilon, w^\varepsilon, r, \bU, W)(t) \\
		& = - \int_{\Omega^{w^\varepsilon}(t)} \vr^\varepsilon (\bu^\varepsilon - \bU) \cdot ( \pt + \bu^\varepsilon \cdot \nabla ) \bU \dx 
		- \frac{1}{\varepsilon^2} \int_{\Omega^{w^\varepsilon}(t)} (p(\vr^\varepsilon) - p(r)) \Div \bU \dx \\
		& \quad
		+ \frac{1}{\varepsilon^2} \int_{\Gamma^{w^\varepsilon}(t)} (p(r) - p(\bar{\vr})) (\bu^\varepsilon - \bU) \cdot \bn^{w^\varepsilon} \dH^2 \\
		& \quad 
		- \nu \int_{\Omega^{w^\varepsilon}(t)} \bbs(\nabla \bU) : \big(\nabla \bu^\varepsilon - \nabla \bU\big) \dx 
		- \int_{\Gamma} (\pt w^\varepsilon - \pt W) (\pt^2 W + \Delta^2 W - \nu_s \Delta \pt W) \dH^2 \\
		& \quad - \frac{1}{\varepsilon^2} \int_{\Omega^{w^\varepsilon}(t)} \frac{1}{\gamma - 1} \left[ (\vr^{\varepsilon} - r) (\pt + \bU \cdot \nabla) p'(r) + \vr (\bu^\varepsilon - \bU) \cdot \nabla p'(r) \right] \dx \\
		& \quad - \alpha \nu \int_{\Gamma^w(t)} \big( \bU - \pt W \bn \circ \inv{\bPhi_{w^\varepsilon}} \big)_{\btau^{w^\varepsilon}} \cdot \big((\bu^\varepsilon - \bU) - (\pt w^\varepsilon - \pt W) \bn \circ \inv{\bPhi_{w^\varepsilon}}\big)_{\btau^{w^\varepsilon}} \dH^2 \\
		& \quad - \alpha_0 \nu \int_{\Gamma_0} \bU_{\btau^0} \cdot (\bu^\varepsilon - \bU)_{\btau^0} \dH^2.
	\end{align*}
\end{proposition}
\subsection{Uniform bounds}
\label{sec:uniform-bounds}
Simply letting $ r = \bar{\vr} $, $ \bU = 0 $, $ W = 0 $ in \eqref{eqs:relative-energy-inequality-epsilon}, one obtains 
\begin{align}
	\label{eqs:uniform-estimate-epsilon}
	& \cE \big( (\vr^\varepsilon, \bu^\varepsilon, w^\varepsilon)|(\bar{\vr}, 0, 0) \big)(\tau)
		+ \nu \int_{0}^{\tau} \int_{\Omega^{w^\varepsilon}(t)} \bbs(\nabla \bu^\varepsilon) : \nabla \bu^\varepsilon \dxdt
		+ \nu_s \int_{0}^{\tau} \int_{\Gamma} \abs{\nabla \pt w^\varepsilon}^2 \dH^2 \dt \\
		& \quad \qquad + \alpha \nu \int_{0}^{\tau} \int_{\Gamma^{w^\varepsilon}(t)} \abs{(\bu^\varepsilon-\partial_{t}w^{\varepsilon}{\bf n}\circ\bPhi^{-1}_{w^{\varepsilon}})_{\tau^{w^\varepsilon}}}^2 \dH^2 \dt
        + \alpha_0 \nu \int_{0}^{\tau} \int_{\Gamma_0} \abs{\bu^\varepsilon_{\btau^0}}^2 \dH^2 \dt\\
		&\leq \cE \big( (\vr^\varepsilon, \bu^\varepsilon, w^\varepsilon)|(\bar{\vr}, 0, 0) \big)(0),
        \nonumber
\end{align}
for a.e. $ \tau \in (0,T) $, where $ C > 0 $ is independent of $ \varepsilon $. This initial data in this contribution is assumed to be \textit{well-prepared} in the sense of
\begin{equation}
	\label{eqs:intial-well}
	\norm{\vr_0^{\varepsilon, 1}}_{L^\infty(\Omega^{w_0})}
	+ \norm{\bu_0^\varepsilon}_{L^\infty(\Omega^{w_0})}
	+ \norm{\Delta w_0^\varepsilon}_{L^2(\Gamma)}
	+ \norm{w_1^\varepsilon}_{L^2(\Gamma)}
	\leq D,
\end{equation}
and
\begin{subequations}
	\label{eqs:well-prepared-initial}
	\begin{alignat}{3}
		\bu_0^\varepsilon & \to \tilde{\bu}_0, && \tin L^2(\Omega^{\tilde{w}_0}), \label{U}\\
		\vr_0^{\varepsilon,1} & \to 0, && \tin L^\infty(\Omega^{\tilde{w}_0}),\label{rho} \\
		w_0^\varepsilon & \to \tilde{w}_0, && \tin W^{2,2}(\Gamma), \label{W)}\\
		w_1^\varepsilon & \to \tilde{w}_1, && \tin L^2(\Gamma),\label{W1}
	\end{alignat}
\end{subequations}
for some fixed constant $D>0$, which denotes the initial data perturbation. 
It follows from \eqref{eqs:uniform-estimate-epsilon} that
\begin{gather}
	\esssup_{t \in (0,T)} \norm{\sqrt{\vr^\varepsilon} \bu^\varepsilon}_{L^2(\Omega^{w^\varepsilon})} \leq C, 
	\label{eqs:uniform-bounds-rho-u} \\
	\sqrt{\nu} \norm{\nabla \bu^\varepsilon + \tran{(\nabla \bu^\varepsilon)} - \frac{2}{3} \Div \bu^\varepsilon \bbi}_{L^2(\Omega^{w^\varepsilon})} \leq C, 
	\label{eqs:uniform-bounds-nabla-u}\\
	\esssup_{t \in (0,T)} \int_{\bar{\vr}/2 \leq \vr^\varepsilon \leq 2 \bar{\vr}} \abs{\frac{\vr^\varepsilon - \bar{\vr}}{\varepsilon}}^2 \dx \leq C, 
	\label{eqs:uniform-bounds-rho-barrho-epsilon} \\
	\esssup_{t \in (0,T)} \int_{\vr^\varepsilon > 2 \bar{\vr}} (1 + \abs{\vr^\varepsilon}^\gamma) \dx
	+ \esssup_{t \in (0,T)} \int_{\vr^\varepsilon < \bar{\vr}/2} (1 + \abs{\vr^\varepsilon}^\gamma) \dx \leq \varepsilon^2 C, 
	\label{eqs:uniform-bounds-1+rho-epsilon} \\
	\esssup_{t \in (0,T)} \big(\norm{\pt w^\varepsilon}_{L^2(\Omega^{w^\varepsilon})} + \norm{\Delta w^\varepsilon}_{L^2(\Omega^{w^\varepsilon})}\big) \leq C, 
	\label{eqs:uniform-bounds-w} \\
	\sqrt{\nu_s} \norm{\nabla \pt w^\varepsilon}_{L^2(\Omega^{w^\varepsilon})} \leq C.
	\label{eqs:uniform-bounds-nabla-dt-w}
\end{gather}
The inequalities \eqref{eqs:uniform-bounds-rho-barrho-epsilon} and \eqref{eqs:uniform-bounds-1+rho-epsilon} are consequences of the following:
\begin{equation}
p(\rho^{\varepsilon})-p'(\overline{\rho})(\rho^{\varepsilon}-\overline{\rho})-p(\overline{\rho})\geq\left\{\begin{array}{ll}
C(\overline{\rho})(\rho^{\varepsilon}-\overline{\rho})^{2},& \displaystyle \frac{\overline{\rho}}{2}<\rho^{\varepsilon}<2\overline{\rho},\\
C(\overline{\rho})(1+(\rho^{\varepsilon})^{\gamma}),&\displaystyle \mbox{otherwise}.
\end{array}\right.
\end{equation}
Note that \eqref{eqs:uniform-bounds-1+rho-epsilon} implies
\begin{equation}
	\label{eqs:uniform-bounds-rho-epsilon}
	\esssup_{t \in (0,\tau)} \int_{\vr^\varepsilon > 2 \bar{\vr}} \abs{\vr^\varepsilon}^\gamma \dx
	+ \esssup_{t \in (0,\tau)} \int_{\vr^\varepsilon < \bar{\vr}/2} \abs{\vr^\varepsilon}^\gamma \dx \leq \varepsilon^2 C, 
\end{equation}
Here $C > 0$ depends on $D > 0$.

As in \cite{FN2017}, it is convenient to introduce the following notation: for each measurable function $ f(t,x) $ we write $ f = \ess{f} + \res{f} $, where
\begin{equation*}
	\ess{f} = \chi(\vr) f, \quad
	\res{f} = (1 - \chi(\vr)) f, \quad
	\chi(\vr) = \left\{
		\begin{aligned}
			& 1 \quad && \text{for } \bar{\vr}/2 \leq \vr \leq 2 \bar{\vr}, \\
			& 0 \quad && \text{otherwise}.
		\end{aligned}
	\right.
\end{equation*}

\subsection{Transformation between two deformed domains}
\label{sec:tranformation}
To construct suitable test functions based on the strong solution defined in $ \Omega^\eta $, one needs to make sure it is possible to compare two solutions in different domains.
In this section, we are going to construct a simple composition of two maps, which are defined on top of the displacements $ w $ and $ \eta $ respectively. 

\begin{figure}[htbp!]
	\centering
	\begin{tikzpicture}[line width=0.5pt,scale=0.8]
		
		\draw plot[smooth cycle, tension=0.7] coordinates{(0.1,0.1) (1,1.5) (1.8,2) (2.8,1.2) (3.6,1) (3.8,-1) (3,-1.35) (1,-2) (0.5,-1)};
		\draw [dashed] plot[smooth cycle, tension=0.7] coordinates{(0,0) (0.2,1) (1,1.25) (2,2) (3,1.5) (3.75,0.6) (4,-0.3) (3.3,-0.7) (3,-1.75) (2,-2.05) (0.5,-1)};

		\draw [shift ={(8,0)}] plot[smooth cycle, tension=0.7] coordinates{(0.25,0.2) (1,1.4) (2,1.5) (3,1.75) (4,-1) (2,-1.5) (0.5,-1.5)};
		\draw [dashed,shift ={(8,0)}] plot[smooth cycle, tension=0.7] coordinates{(0,0) (0.2,1) (1,1.25) (2,2) (3,1.5) (3.75,0.6) (4,-0.3) (3.3,-0.7) (3,-1.75) (2,-2.05) (0.5,-1)};
		
		\draw [shift ={(4,-4)}] plot[smooth cycle, tension=0.7] coordinates{(0,0) (0.2,1) (1,1.25) (2,2) (3,1.5) (3.75,0.6) (4,-0.3) (3.3,-0.7) (3,-1.75) (2,-2.05) (0.5,-1)};
		
		\draw [-stealth] (3.9,-4.5) to [bend left=35] (1.9,-2.5);
		\draw node at (2.85,-3.35) {$\bPhi_w$}; 
		\draw [-stealth] (8.1,-4.5) to [bend right=35] (10.1,-2.5);
		\draw node at (9.15,-3.35) {$\bPhi_\eta$}; 
		\draw [-stealth] (3.8,1.5) to [bend left=20] (8.2,1.5);
		\draw node at (6,1.25) {$\bPhi_\eta \circ \bPhi_w^{-1}$}; 
		
		\draw node at (6,-4) {$\Omega$};
		\draw node at (8,-6.3) {$\partial \Omega = \Phi(\Gamma)$};
		\draw node at (2,0) {$\Omega^w$};
		\draw node at (-0.5,0) {$\Gamma^w$};
		\draw node at (10,0) {$\Omega^\eta$};
		\draw node at (12.5,0) {$\Gamma^\eta$};
		
	\end{tikzpicture}
	\caption{Construction of the mapping between two deformed configurations}
	\label{fig:mapping-two}
\end{figure}

Let the neighborhood of $ \partial \Omega $ be $ \cN_a^b $ for $ a,b > 0 $ as in Section \ref{sec:geometric-setting}.
Given $ \bPhi_w : \Omega \rightarrow \Omega^w $ and $ \bPhi_\eta : \Omega \rightarrow \Omega^\eta $, which are two transformations with respect to the displacements $ w, \eta $ respectively defined in the same fashion as in Section \ref{sec:geometric-setting}. 
Define
\begin{equation*}
	\cN_w \coloneqq \{\bX^w \in \bbr^3: \bPhi_w^{-1}(\bX^w) \in \cN_a^b\}.
\end{equation*}
Now we construct a mapping $ \bPsi : \Omega^w \rightarrow \Omega^\eta $ such that
\begin{equation}
	\label{eqs:mapping-Psi}
	\bPsi(\bX^w,t) = \bPhi_\eta \circ \bPhi_w^{-1} (\bX^w,t),
\end{equation}
fulfilling $ \bPsi_{|\Gamma^w} = \bX^w + \eta(\inv{\bPhi_w}(\bX^w)) \bn(\inv{\bPhi_w}(\bX^w)) $ and $ \bPsi_{|(\Omega^w(t) \setminus \cN_w)} = \bX^w $ for $\bX^w \in \Omega^w(t)$, see Figure \ref{fig:mapping-two}.
Then we subsequently have the associated quantities as
\begin{equation*}
	\nabla \bPsi = \nabla \bPhi_\eta \circ \inv{\bPhi_w} \nabla \inv{\bPhi_w}, \quad 
	J \coloneqq \det (\nabla \bPsi) = J_\eta \circ \inv{\bPhi_w} \inv{J_w}, \quad 
	\cA \coloneqq \inv{(\nabla \bPsi)}.
\end{equation*}

Under the setting above, we discuss briefly the regularity of the transformations in the manuscript, which guarantees the nonlinear estimates later. 
%
Since $ \eta $ is supposed to be sufficiently smooth, by construction $ \bPsi $ inherits the regularity from $ w $ automatically. Namely, with \eqref{eqs:nabla-Phi_w-1}, \eqref{eqs:pt-Phi_w}, and Proposition \ref{prop:more-spatial-regularity}, we conclude the following lemma. It is noted that in the following, $ L_w^p $ will simply denote the Lebesgue space $ L^p(\Omega^w(t)) $ depending on $ w $.
\begin{lemma}
	\label{lem:bPsi}
	Let $\bPsi$ be defined as in \eqref{eqs:mapping-Psi}. Then it satisfies
	\begin{align*}
		\bPsi 
		& \in L^\infty(0,T; H^2(\Omega^w(t)), 
		\\
		\pt \bPsi 
		& \in L^\infty(0,T; L^2(\Omega^w(t)))\cap L^{2}(0,T;L^r(\Omega^{w}(t)))  \text{ for all } 1 \leq r < 4, 
		\\
		\nabla \bPsi 
		& \in L^\infty(0,T; L^p(\Omega^w(t))) \text{ for all } 1 \leq p < \infty,
	\end{align*}
	Depending on $\Omega$, $\nu_s$, $\gamma$, it follows that
	\begin{align*}
		\pt \bPsi 
		& \in L^2(0,T; H^1(\Omega^w(t))) 
		\cap L^2(0,T; L^p(\Omega^w(t))) \text{ for all } 1 \leq p < \infty, \\
		\nabla \bPsi 
		& \in L^2(0,T; L^\infty(\Omega^w(t))),\\
		\nabla^2 \bPsi 
		& \in L^2(0,T; H^s(\Omega^w(t))) \text{ for some } 0 \leq s < \onequater.
	\end{align*}
	Moreover, for a.e. $ t \in (0,T) $, it holds the following Lipschitz-type estimates
	\begin{align*}
		\norm{\pt \bPsi(t)}_{L_w^q} 
		& + \norm{\nabla \bPsi(t) - \bbi}_{L_w^p}
		+ \norm{\cA(t) - \bbi}_{L_w^p} \\
		& + \norm{\nabla^2 \bPsi(t)}_{L_w^q}
		+ \norm{\nabla \cA(t)}_{L_w^q}
		\leq C \big(\norm{w(t) - \eta(t)}_X\big),
	\end{align*}
	for all $1 \leq q < 2$, $1 \leq p < \infty$.  If $ \nu_s > 0 $,
	\begin{equation*}
		\int_0^t 
		\norm{\pt \bPsi(s)}_{L_w^p}^2 
		\,\d s
		\leq C \int_0^t \Big(
		\norm{\pt w(s) - \pt \eta(s)}_{L_\Gamma^p}^2 
		+ \norm{\nabla w(s) - \nabla \eta(s)}_{L_\Gamma^p}^2
		\Big) \,\d s.
	\end{equation*}
	Here $ C > 0 $ depends on the solution $ (\vr, \bu, w) $ and $ \normm{\pt \eta}_{L_t^\infty W_\Gamma^{2,2}} $, $ \normm{\nabla \eta}_{L_t^\infty W_\Gamma^{2,2}} $. 
\end{lemma}
\begin{proof}
	The proof is a direct consequence of the construction of $ \bPsi $ and the regularities of $ w $ we have. So we omit the details here. Similar arguments can also be found in e.g. \cite{Trifunovic2023}.
\end{proof}
\begin{remark}
	We remark that the dependence on the regularity of $ \eta $ in Lemma \ref{lem:bPsi} is not optimal, in the sense of the lowest regularity requirement. In other words, it is sufficient for us to continue, as we always assume a sufficiently regular strong solution for the argument. In a very recent work by Breit--Mensah--Schwarzacher--Su \cite{BMSS2023}, they considered a Ladyzhenskaya--Prodi--Serrin condition in the context of a fluid-structure interaction problem with incompressible Navier--Stokes equations and a viscoelastic flexible shell.
\end{remark}
By a similar chain rule, we have the following regularities for any smooth function under the transformation $\bPsi$.
\begin{corollary}[Regularity loss]
	\label{coro:regularity-loss}
	Let $ f $ be a function lying in $ L^s(0,T; W^{1,\infty}(\Omega^\eta(t)) \cap W^{2,2}(\Omega^\eta(t)))$, $\pt f \in L^2(0,T; L^s(\Omega^\eta(t)))$ for some $1 \leq s \leq \infty$. Then under the transformation $\bPsi$ defined in \eqref{eqs:mapping-Psi}, the transformed function $g = f \circ \bPsi$ lies in the space
	\begin{equation*}
		\nabla g \in L^s(0,T; L^p(\Omega^w(t))), \quad 
		\nabla^2 g \in L^s(0,T; L^q(\Omega^w(t))), \quad
		\pt g \in L^1(0,T; L^k(\Omega^w(t)))
	\end{equation*}
	for all $ 1 \leq p < \infty $, $1 \leq q < 2$, and $1 \leq k < 4$ if $\nu_s = 0$, $\Omega \in \bbtwo$, while $1 \leq k < \infty$ if $\nu_s > 0$.
\end{corollary}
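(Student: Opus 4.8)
The plan is to differentiate $g=f\circ\bPsi$ by the chain rule and then read off the integrability of every resulting factor from Lemma~\ref{lem:bPsi}, Lemma~\ref{lem:integrability-Psi} and a change of variables. Explicitly,
\begin{align*}
	\nabla g &= \big((\nabla f)\circ\bPsi\big)\,\nabla\bPsi, \qquad
	\pt g = (\pt f)\circ\bPsi + \big((\nabla f)\circ\bPsi\big)\cdot\pt\bPsi, \\
	\nabla^2 g &= \big((\nabla^2 f)\circ\bPsi\big):\big(\nabla\bPsi\otimes\nabla\bPsi\big) + \big((\nabla f)\circ\bPsi\big)\,\nabla^2\bPsi .
\end{align*}
Thus the statement reduces to two points: composition with $\bPsi$ must not destroy Lebesgue integrability, and the composed $f$-factors must then be paired with the $\bPsi$-factors via H\"older's inequality.

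For the first point I would record that for a.e.~$t\in(0,T)$ the map $\bPsi(t,\cdot)\colon\Omega^w(t)\to\Omega^\eta(t)$ is a bi-Lipschitz homeomorphism whose Jacobian $J=J_\eta\circ\inv{\bPhi_w}\,\inv{J_w}$ is bounded above and below away from zero, with constants depending only on $\Gamma$, the \textit{a priori} pointwise bounds $m<w<M$, and the smoothness of $\eta$. This follows from the explicit form of $\bPhi_w$ in Section~\ref{sec:geometric-setting} --- in the adapted tubular coordinates the Jacobian of $\bPhi_w$ collapses to $1+f_\Gamma'(d)\,w$, so $\nabla w$ never enters the determinant --- together with $w\in L_t^2W_\Gamma^{1,\infty}$ from Proposition~\ref{prop:more-spatial-regularity-Psi}, which also makes $\Omega^w(t)$ Lipschitz for a.e.~$t$. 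Consequently the change-of-variables formula yields $\norm{h\circ\bPsi(t)}_{L^m(\Omega^w(t))}\le C\,\norm{h(t)}_{L^m(\Omega^\eta(t))}$ for every $m\in[1,\infty]$, so the hypotheses on $f$ give $(\nabla f)\circ\bPsi\in L_t^sL_w^\infty$, $(\nabla^2 f)\circ\bPsi\in L_t^sL_w^2$ and $(\pt f)\circ\bPsi\in L_t^2L_w^s$.

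For the second point I would combine these with the regularity of $\bPsi$: by Lemma~\ref{lem:bPsi} and Lemma~\ref{lem:integrability-Psi}, $\nabla\bPsi\in L_t^\infty L_w^p$ for all $p<\infty$, $\nabla^2\bPsi\in L_t^\infty L_w^q$ for all $q<2$, and $\pt\bPsi\in L_t^\infty L_w^q$ for all $q<2$; moreover $\pt\bPsi\in L_t^2L_w^p$ for all $p<\infty$ when $\nu_s>0$, whereas in the flat undamped case one invokes the kinematic identity $\pt w=(\bu\cdot\bn^w)\circ\bPhi_w$, which by Lemma~\ref{lem:trace-Phi_w} (trace of $\bu$ in $L_t^2L_\Gamma^r$ for every $r<4$) and $\bn^w\in L_t^\infty L_\Gamma^r$ for every $r<\infty$ puts $\pt w\in L_t^2L_\Gamma^k$ for all $k<4$, hence $\pt\bPsi\in L_t^2L_w^k$ for all $k<4$. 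Now H\"older's inequality in $x$ (trivially in $t$) gives $\nabla g\in L_t^sL_w^p$ for all $p<\infty$; for $\nabla^2 g$, the first summand lies in $L_t^s(L_w^2\cdot L_w^p)$ and the second in $L_t^s(L_w^\infty\cdot L_w^q)$, both contained in $L_t^sL_w^q$ for all $q<2$; and $\pt g$ is the sum of $(\pt f)\circ\bPsi\in L_t^2L_w^s$ and $\big((\nabla f)\circ\bPsi\big)\,\pt\bPsi$, which, once $s$ is taken large enough (in particular $s=\infty$, as for the smooth strong solution to which the corollary is applied, so that the time integrability is $L^2$), lies in $L_t^2L_w^k$ with $k<4$ if $\nu_s=0$, $\Omega\in\bbtwo$ and $k<\infty$ if $\nu_s>0$. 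I expect the only delicate point to be the bookkeeping for $\pt g$: its spatial integrability is dictated entirely by $\pt\bPsi$, and it is precisely here that the dichotomy $\nu_s=0$ versus $\nu_s>0$ enters --- through the kinematic relation $\pt w=(\bu\cdot\bn^w)\circ\bPhi_w$, which yields only $k<4$, as opposed to the parabolic gain $\pt w\in L_t^2H_\Gamma^1$, which yields $k<\infty$; the rest is mechanical, requiring no idea beyond the chain rule, the two-sided Jacobian bound for $\bPsi$, and H\"older's inequality.
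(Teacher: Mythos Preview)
Your proposal is correct and follows exactly the chain-rule approach the paper indicates with its one-line justification ``By a similar chain rule.'' Your explicit verification that $\det\nabla\bPhi_w$ depends only on $w$ pointwise (not on $\nabla w$) and is therefore uniformly bounded, together with your derivation of $\pt\bPsi\in L_t^2L_w^k$ for $k<4$ in the flat undamped case via the kinematic identity $\pt w=(\bu\cdot\bn^w)\circ\bPhi_w$ and the trace regularity of $\bu$, fills in precisely the details the paper leaves implicit (the latter mechanism reappears verbatim in the proof of Proposition~\ref{prop:nonlinear-terms}).
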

\begin{remark}
	Here $\pt g = \pt f + \pt \bPsi \cdot \nabla f$. In the case of $\Omega \in \bbtwo$ and $\nu_s = 0$, it follows from Proposition \ref{prop:more-spatial-regularity} that $\pt w \in L^2(0,T;L^k(\Gamma))$, $1 \leq k < 4$, which also holds for $\pt \bPsi \in L^2(0,T;L^k(\Omega^w(t)))$. 
\end{remark}
\subsection{Singular limits}
\label{sec:singular-limits-final}
Now we are ready to state the detailed version of Theorem \ref{thm:singular-limit} and justify it by passing to the limit of low Mach number and high Reynolds number, i,e, $ \varepsilon \rightarrow 0 $, $ \nu \rightarrow 0 $. 
\begin{theorem}
    \label{thm:singular-limit-full}
    Let $\Omega \in \bbtwo$, $ \nu_s > 0 $, $ \gamma > \frac{3}{2} $. Assume that the initial data \eqref{eqs:SL-initial-data} is well-prepared satisfying \eqref{eqs:intial-well} and \eqref{eqs:well-prepared-initial}. Let $ (\vr^\varepsilon, \bu^\varepsilon, w^\varepsilon) $ be a weak solution to the rescaled compressible fluid-structure interaction problem \eqref{eqs:FSI-model-rescale} constructed in Theorem \ref{thm:weak}, possessing an additional regularity
    \begin{equation}
        \label{eqs:w-assumption-thm-3}
        \nabla w^\varepsilon \in L^2(0,T; W^{1,\mathfrak{p}}(\Gamma))
        \cap W^{1,2}(0,T; L^\fp(\Gamma)) \text{ for some } \mathfrak{p} > \max\left\{4,\frac{2\gamma}{\gamma - 1}\right\}.
    \end{equation}
    Let $ (\tilde{\bu}, \tilde{w}) $ be the unique strong solution of \eqref{eqs:EulerPlate} defined on the interval $ (0,T') $ subjected to initial data $ (\tilde{\bu}, \tilde{w}, \pt \tilde{w})(0, \cdot) = (\tilde{\bu}_0 \circ \bPhi_{\tilde{w}_0 - w_0^\varepsilon}, \tilde{w}_0, \tilde{w}_1) $.
    Then it follows
    \begin{align}
    	\nonumber
    	& \sup_{0 \leq t \leq T'}
    		\int_{\Omega^{w^\varepsilon}(t)} \bigg(
	    		\vr^\varepsilon \abs{\bu^\varepsilon - \tilde{\bu}}^2 
	    		+ \abs{\frac{\vr^\varepsilon - \bar{\vr}}{\varepsilon}}^\gamma
	    	\bigg)(t) \dx \\
	    \nonumber
    	& \qquad \qquad + \sup_{0 \leq t \leq T'} \int_\Gamma 
    		\bigg(
    			\abs{\pt w^\varepsilon - \pt \tilde{w}}^2
    			+ \abs{\Delta w^\varepsilon - \Delta \tilde{w}}^2
    		\bigg)(t) \dH^2 \\
    	\label{eqs:singular-Limit-convergence}
    	& \qquad \leq C(T',D) \Bigg[(\varepsilon + \nu)
    		+ \int_{\Omega^{w_0^{\varepsilon}}} \bigg(
    			\abs{\bu_0^\varepsilon - \tilde{\bu}_0 \circ \bPhi_{\tilde{w}_0 - w_0^\varepsilon}}^2
    			+ \abs{\vr_0^{\varepsilon,1}}^2
    		\bigg) \dx \\
    	\nonumber
    	& \phantom{\qquad \leq C(T',D) \Bigg[(\varepsilon + \nu)} \, + \int_\Gamma 
    		\bigg(
    			\abs{w_1^\varepsilon - \tilde{w}_1}^2
    			+ \abs{\Delta w_0^\varepsilon - \Delta \tilde{w}_0}^2
    		\bigg) \dH^2 \Bigg],
    \end{align}
	where the constant $ C = C(T',D) $ depends on the norm of the limit solution $ (\tilde{\bu}, \tilde{\eta}) $, $\nu_s > 0$, and on the size of the initial data perturbation. 
 
\end{theorem}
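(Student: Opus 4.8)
The plan is to run the scaled relative entropy method, feeding into the relative energy inequality \eqref{eqs:relative-energy-inequality-epsilon} the target strong solution of the Euler--plate system \eqref{eqs:EulerPlate}, transported from its natural domain $\Omega^{\tilde w}(t)$ onto the moving fluid domain $\Omega^{w^\varepsilon}(t)$. Let $\bPsi=\bPhi_{\tilde w}\circ\bPhi_{w^\varepsilon}^{-1}:\Omega^{w^\varepsilon}(t)\to\Omega^{\tilde w}(t)$ be the transformation of Section~\ref{sec:tranformation} with $\eta$ replaced by $\tilde w$, and write $J=\det\nabla\bPsi$, $\cA=(\nabla\bPsi)^{-1}$. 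Since the limit velocity must be solenoidal and must match the kinematic condition on the interface, the naive pull-back $\tilde{\bu}\circ\bPsi$ will not do; instead I use the Piola transform
\begin{equation*}
	\bU := J\,(\nabla\bPsi)^{-1}\,(\tilde{\bu}\circ\bPsi),\qquad W:=\tilde w,\qquad r:=\bar\vr.
\end{equation*}
By the Piola identity $\Div(J\tran{\cA})=0$ this yields $\Div\bU=0$, and the transformation rule $\bn^{\tilde w}\circ\bPsi=\tran{\cA}\bn^{w^\varepsilon}$ together with the weighting by $J$ makes $\bU\cdot\bn^{w^\varepsilon}=\pt W\circ\bPhi_{w^\varepsilon}^{-1}$ hold exactly on $\Gamma^{w^\varepsilon}$, so no interface defect is produced in the weak formulation. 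The hypothesis \eqref{eqs:w-assumption-thm-3} together with $\nu_s>0$, via Lemma~\ref{lem:bPsi} and Corollary~\ref{coro:regularity-loss}, guarantees that $(r,\bU,W)$ is an admissible test triple in \eqref{eqs:relative-energy-inequality-epsilon}, with in particular $\nabla\bU\in L^2(0,T';L^\infty(\Omega^{w^\varepsilon}(t)))$ and $\pt\bU\in L^2(0,T';L^p(\Omega^{w^\varepsilon}(t)))$ for all finite $p$.

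Next I transform the system \eqref{eqs:EulerPlate} to $\Omega^{w^\varepsilon}(t)$ exactly as in Section~\ref{sec:tranformation}, producing a perturbed incompressible Euler--plate system whose right-hand sides $F_{w^\varepsilon},\bG_{w^\varepsilon},H_{w^\varepsilon},H_{w^\varepsilon}^1,\bH_{w^\varepsilon}^2$ are built from $\cA-\bbi$, $\pt\bPsi$, $\nabla\cA$; the analogues of Proposition~\ref{prop:nonlinear-terms} bound all of these by $h(t)\,\cE(t)$ for a suitable $h\in L^1(0,T')$ --- here the extra regularity and $\nu_s>0$ are essential, because the high Reynolds regime forbids borrowing any spatial regularity of $\bu^\varepsilon$. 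Inserting $(r,\bU,W)$ into the remainder $\cR$ of \eqref{eqs:relative-energy-inequality-epsilon}, the choice $r=\bar\vr$ constant together with $\Div\bU=0$ makes the bulk pressure work, the $p'(r)$-commutator, and the interface pressure term vanish identically; the limiting pressure $\tilde\Pi$ re-enters only through $-\int\vr^\varepsilon(\bu^\varepsilon-\bU)\cdot(\pt+\bu^\varepsilon\cdot\nabla)\bU$ after one substitutes the transported momentum relation $(\pt+\bU\cdot\nabla)\bU=-\bar\vr^{-1}\nabla\Pi_{w^\varepsilon}+(\text{error})$, and the fluid force $\Pi_{w^\varepsilon}$ induces on $\Gamma^{w^\varepsilon}$ matches (up to the errors above) the trace $\tilde\Pi\circ\bPhi_{\tilde w}$ and hence cancels against the plate coupling $-\int_\Gamma(\pt w^\varepsilon-\pt\tilde w)(\pt^2\tilde w+\Delta^2\tilde w-\nu_s\Delta\pt\tilde w)$, using $\tilde\Pi\circ\bPsi\circ\bPhi_{w^\varepsilon}=\tilde\Pi\circ\bPhi_{\tilde w}$ and $\bn\cdot(\bn^{w^\varepsilon}\circ\bPhi_{w^\varepsilon})=1$ in the flat geometry. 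What remains is: the quadratic term $\int\vr^\varepsilon|\bu^\varepsilon-\bU|^2|\nabla\bU|$, absorbed through $\norm{\nabla\bU}_{L^\infty}\in L^1(0,T')$; the explicitly $\nu$-weighted viscous and friction contributions, bounded by $C\nu$ via the uniform bounds \eqref{eqs:uniform-bounds-rho-u}--\eqref{eqs:uniform-bounds-nabla-dt-w}; and terms of order $\varepsilon$ generated by $\Div\bu^\varepsilon=-(\vr^\varepsilon)^{-1}(\pt\vr^\varepsilon+\bu^\varepsilon\cdot\nabla\vr^\varepsilon)$ and by the essential/residual splitting of the pressure potential, which is controlled by \eqref{eqs:uniform-bounds-rho-barrho-epsilon}--\eqref{eqs:uniform-bounds-rho-epsilon}.

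After fixing the constants in Young's inequality small enough to absorb the viscous defect $\nu\norm{\nabla\bu^\varepsilon-\nabla\bU}_{L^2}^2$, the friction defects, and $\nu_s\norm{\nabla\pt w^\varepsilon-\nabla\pt\tilde w}_{L^2}^2$ into the dissipative left-hand side of \eqref{eqs:relative-energy-inequality-epsilon}, one arrives at
\begin{equation*}
	\cE(\tau)\le \cE(0)+C(T',D)\,(\varepsilon+\nu)+C(T',D)\int_0^\tau h(t)\,\cE(t)\,\dt,\qquad\text{a.e. }\tau\in(0,T'),
\end{equation*}
and Gr\"onwall's inequality gives $\sup_{[0,T']}\cE\le C(T',D)(\varepsilon+\nu+\cE(0))$. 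Finally, the coercivity bound \eqref{eqs:rho-r-lower-bounds} with $r=\bar\vr$ turns the left-hand side of this estimate into the norms on the left of \eqref{eqs:singular-Limit-convergence} --- using $\vr^\varepsilon|\bu^\varepsilon-\tilde{\bu}|^2\le 2\vr^\varepsilon|\bu^\varepsilon-\bU|^2+2\vr^\varepsilon|\bU-\tilde{\bu}|^2$ and $\norm{\bU-\tilde{\bu}}\le C\norm{w^\varepsilon-\tilde w}_X\le C\sqrt{\cE}$ from Lemma~\ref{lem:bPsi} --- while $\cE(0)$ is bounded by the prescribed perturbation on the right of \eqref{eqs:singular-Limit-convergence}, using $\vr_0^\varepsilon=\bar\vr+\varepsilon\vr_0^{\varepsilon,1}$, the well-preparedness \eqref{eqs:intial-well}, and the fact that at $t=0$ the shift $\bPhi_{\tilde w_0-w_0^\varepsilon}$ reduces to the identity on $\Gamma^{w_0^\varepsilon}$ up to errors measured by $\norm{w_0^\varepsilon-\tilde w_0}_X$. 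The main obstacle is the second step: ensuring that every error term produced by the Piola transform carries enough integrability to be closed by $\cE$ in the simultaneous low-Mach/high-Reynolds regime --- in particular the terms involving $\nabla\pt w^\varepsilon$ inside the fluid force $\mathcal{F}^\varepsilon$ of \eqref{eqs:F-scaled}, which is precisely what forces the hypotheses $\nu_s>0$ and \eqref{eqs:w-assumption-thm-3}.
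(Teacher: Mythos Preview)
Your proposal is correct and takes the same route as the paper: Piola transform for the test velocity so that $\Div\bU=0$ and the interface kinematic condition hold exactly, the choice $r=\bar\vr$ to kill all pressure-potential terms in the remainder, cancellation of the transported pressure trace against the plate forcing via the weak continuity equation and Reynolds transport, absorption of the $\nu_s$ term, and Gr\"onwall. Two small clarifications: under the Piola map the transported Euler--plate system carries only a single bulk error $\bF^\varepsilon$ (the interface defects $H_w^1,\bH_w^2$ of Section~\ref{sec:tranformation} vanish identically, which is precisely the point of using Piola, and $H_w$ is absent since the limit has no viscous stress), so Proposition~\ref{prop:nonlinear-terms} is replaced by the estimates \eqref{eqs:F^epsilon-L^2}--\eqref{eqs:F^epsilon-L^p}; and the residual part of $\int\vr^\varepsilon(\bu^\varepsilon-\bU)\cdot\bF^\varepsilon$ is not closed by an $h(t)\cE(t)$ bound but by $\varepsilon^{1/\gamma}\cE^{1/2}\,\|\bF^\varepsilon\|_{L^{\fp}}\le C\varepsilon^{1/\gamma}$ using that $\|\bF^\varepsilon\|_{L^{\fp}}$ is merely uniformly bounded --- this is exactly where the constraint $\fp>\frac{2\gamma}{\gamma-1}$ in \eqref{eqs:w-assumption-thm-3} is used.
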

\begin{remark}
	{As a consequence of Theorem \ref{thm:singular-limit-full}, in the regime $ \varepsilon \to 0 $, $ \nu \to 0 $, since the initial data is \textit{well-prepared} as in \eqref{eqs:well-prepared-initial},
	it implies the convergence. }
\end{remark}
\begin{remark}\label{rmk:regularity-loss}
    Note that, by the interpolation and embeddings, e.g. \cite[Lemma 2.3]{AL2023a} and \cite[Theorem 4.6.1]{Triebel1978}, it follows from \eqref{eqs:w-assumption-thm-3}, that 
    \begin{equation}
        \label{eqs:w-lipschitz}
        \begin{aligned}
            \nabla w^\varepsilon \in C\big([0,T]; (L^\fp(\Gamma), W^{1,\fp}(\Gamma))_{1-1/2,2}\big)
            = C([0,T]; B_{\fp, 2}^{1-1/2}(\Gamma))
            \hookrightarrow
            C([0,T]; C(\Gamma)),
        \end{aligned}
    \end{equation}
    for $1 > 1/2 + 2/\fp$, which means the boundary of $\Omega^w(t)$ is uniform Lipschitz for all $t$. Here $(X,Y)_{\theta,p}$ denotes the real interpolation space of $X,Y$ with index $\theta$, and $B_{\fp \mathfrak q}^{s}(\Omega)$ is the standard Besov space, cf. \cite{Triebel1978}. Hence, for $\fp > 4$, there will be no spatial regularity loss compared to Corollary \ref{coro:regularity-loss}. Namely, the results in Corollary \ref{coro:regularity-loss} can be improved to
    \begin{equation}
        \label{eqs:no-regularity-loss}
        \nabla g \in L^\infty(0,T; L^\infty(\Omega^{w^\varepsilon}(t))) \cap L^2(0,T;W^{1,2}(\Omega^{w^\varepsilon}(t))), \pt g \in L^2(0,T; W^{1,2}(\Omega^{w^\varepsilon}(t)))
    \end{equation}
    uniformly in $\varepsilon > 0$ for any $g = f \circ \bPsi^\varepsilon$, $f \in L^\infty(0,T; W^{1,\infty}(\Omega^\eta(t)) \cap W^{2,2}(\Omega^\eta(t)))$, and $\pt f\in L^2(0,T; W^{1,2}(\Omega^\eta(t)))$, depending on \eqref{eqs:w-assumption-thm-3}. Moreover, with the same $\fp$ as in \eqref{eqs:w-assumption-thm-3}, one has
    \begin{equation*}
        \pt g, \nabla g \in L^2(0,T; W^{1,\fp}(\Omega^{w^\varepsilon}(t)))
    \end{equation*}
    if $\pt f, \nabla f \in L^2(0,T; W^{1,\fp}(\Omega^{\eta}(t)))$.
\end{remark}

\begin{remark}
    The additional regularity assumption \eqref{eqs:w-assumption-thm-3} can be achieved by considering fractional Laplacian $(-\Delta)^{s+1}$ for some $s>0$. In other words, if one substitutes the Laplacian operator and the bi-Laplacian operator in the elastic equation with $(-\Delta)^{s+1}$ and $(-\Delta)^{s+2}$, we would obtain from the energy estimates that $w \in L^\infty(0,T;H^{2+s}(\Gamma)) \cap W^{1,\infty}(0,T;H^{1+s}(\Gamma))$ and no more assumptions needed. Note that the definition of the fractional Laplacian varies depending on the contexts, which agree with each other in the Hilbert space setting, see for example \cite{DPV2012}. Then choosing $s > 1 - \frac{2}{\fp}$ entails the desired regularity by Sobolev embeddings in two dimensions. In this case, the existence of weak solutions in Section \ref{sec:weak} can be extended easily with the same arguments, since the fractional Laplacian is a linear operator. 
    
    Alternatively, one may consider the generalized $\fp$-biLaplacian energy and $\fp$-Laplacian damping with the same $\fp$ as above, 
    \begin{equation*}
        \pt^2 w + \Delta (\Div (\absm{\Delta w}^{\fp-2} \nabla w)) 
        - \nu_s \Div(\absm{\nabla \partial_{t}w}^{\fp-2} \nabla \partial_{t}w) = \cF.
    \end{equation*}
    Then by testing the above equation with $\pt w$, we get
    $w \in L^\infty(0,T;W^{2,\fp}(\Gamma)) \cap W^{1,2}(0,T;W^{1,\fp}(\Gamma))$ fulfilling the requirement. In this situation, the existence of weak solutions is not trivial and cannot be extended naturally, as the solid equation now is nonlinear. Thus, one shall employ more compactness arguments to find the weak solutions.
\end{remark}

\begin{remark}
    It is expected that additional regularity assumptions are necessary up to now in the context of inviscid limit for fluid-structure interaction problems as the regularity of the weak solution is not sufficient to justify the limit, cf. Remark \ref{rmk:not-avoid-assumtion}. If only the incompressible limit is considered, we can get rid of these assumptions with a bit of sacrifice of $\gamma$, cf. Theorem \ref{coro:IncompressibleLimit}.
\end{remark}

\subsubsection{Construct suitable test functions}
\label{sec:construct-test-function}
In order to prove Theorem \ref{thm:singular-limit-full}, we would like to employ the scaled relative energy inequality \eqref{eqs:relative-energy-inequality-epsilon}. 
Since the limit system is defined in the domain $ \Omega^\eta $, one needs to transform it back to $ \Omega^w $ concerning the weak solution, to compare it with the weak solution of the compressible fluid-structure interaction problem. A naive idea is to take the same strategy as in Section \ref{sec:tranformation}, namely, to define the transformation between two domains by
\begin{equation*}
    \bPsi^\varepsilon \coloneqq \bPhi_\eta \circ \inv{\bPhi_{w^\varepsilon}}
    = \bPhi_{\eta - w^\varepsilon}, \quad 
    J^\varepsilon \coloneqq \det(\nabla \bPsi^\varepsilon), \quad 
    \cA^\varepsilon \coloneqq \inv{(\nabla \bPsi^\varepsilon)}.
\end{equation*}
Note that all these share the same uniform (in $\varepsilon > 0$) regularities as in Lemmata \ref{lem:bPsi}.

However, simple analog does not apply here directly and it is twofold. First, observing that in relative entropy inequality, the test function should be admissible in the sense of boundary condition $ (\bU \cdot \bn^{w^\varepsilon}) \circ \bPhi_\varepsilon = \pt W $ holds on $ \Gamma^{w^\varepsilon} $, as in Section \ref{sec:tranformation}. Moreover, the limit system is incompressible, meaning the test function should be divergence-free. Otherwise, when deriving the uniform in $\varepsilon$ Lipschitz-like estimates for the relative entropy, we would need to control terms like $\Div \bv$, which are not quadratic. Hence, one must correct the transform $\bPsi^\varepsilon$ so that the constructed test function is admissible.

To handle this problem, we take the well-known \textit{Piola transform}, cf. \cite{Ciarlet1988}, which is endowed with two important properties: (i) it preserves the nullity of normal components; (ii) it maps divergence-free vectors to divergence-free vectors.
A similar idea can be found, e.g., in \cite{GGP2012,SS2022} for simple 2D/1D fluid-structure interaction problems, in \cite{EH2013} for an isogeometric analysis of a Darcy--Stokes--Brinkman system.
To this end, we define
\begin{equation}
    \label{eqs:new-transformation}
    \bv = J^\varepsilon \cA^\varepsilon \tilde{\bv} \circ \bPsi^\varepsilon, \quad 
    \Pi = \tilde{\Pi} \circ \bPsi^\varepsilon.
\end{equation}
By Remark \ref{rmk:regularity-loss}, \eqref{eqs:regularity-v-tilde}, and Sobolev embedding, we know
\begin{equation}
    \label{eqs:tilde-v-psi}
    \begin{gathered}
        \tilde{\bv} \circ \bPsi^\varepsilon \in L^\infty(0,T'; W^{1,\infty}(\Omega^{w^\varepsilon}(t))), \quad
        \pt (\tilde{\bv} \circ \bPsi^\varepsilon), \nabla (\tilde{\bv} \circ \bPsi^\varepsilon) \in L^2(0,T';W^{1,\mathfrak{p}}(\Omega^{w^\varepsilon}(t))), \\
        \Pi \in L^\infty(0,T'; W^{1,\infty}(\Omega^{w^\varepsilon}(t))),
    \end{gathered}
\end{equation}
for all $\mathfrak{p} > 4$.
In view of the Piola identity $\Div (J^\varepsilon (\cA^\varepsilon)^\top
) = 0$, and the identity $\Div (\mathbf T \mathbf v) = {\mathbf{T}}^\top : \nabla {\mathbf{v}} + \mathbf{v} \cdot \Div \mathbf{T}^\top$ for $\mathbf{T} \in \bbr^{3 \times 3}$, $\mathbf{v} \in \bbr^3$,
\begin{equation}
    \label{eqs:div-v=0}
    \Div \bv 
    = \Div (J^\varepsilon (\cA^\varepsilon)^\top) \tilde{\bv} \circ \bPsi^\varepsilon
    + J^\varepsilon (\cA^\varepsilon)^\top : \nabla (\tilde{\bv} \circ \bPsi^\varepsilon)
    = J^\varepsilon (\Div \tilde{\bv}) \circ \bPsi^\varepsilon = 0.
\end{equation}
In addition, the deformation of the normal vector entails
\begin{equation*}
    \bn^\eta \circ \bPsi^\varepsilon = J^\varepsilon \tran{(\cA^\varepsilon)} \bn^{w^\varepsilon}, \text{ on } \Gamma^{w^{\varepsilon}}.
\end{equation*}
Then one obtains the boundary condition on $\Gamma_{T'}$
\begin{equation}
    \label{eqs:v-n-w}
    \bv \cdot \bn^{w^\varepsilon}
    = J^\varepsilon \cA^\varepsilon (\tilde{\bv} \circ \bPsi^\varepsilon)
    \cdot 
    \bn^{w^\varepsilon}
    = (\tilde{\bv} \circ \bPsi^\varepsilon) \cdot (
    J^\varepsilon (\cA^\varepsilon)^\top \bn^{w^\varepsilon})
    = (\tilde{\bv} \cdot \bn^\eta) \circ \bPsi^\varepsilon
    = \pt \eta \circ \bPhi_{w^\varepsilon}.
\end{equation}

With \eqref{eqs:new-transformation} and identities above, we arrive at the transformed system of \eqref{eqs:EulerPlate} in the time-space cylinder denoted by $ Q_{T'}^{w^\varepsilon} \coloneqq \bigcup_{t \in (0,T')} \Omega^{w^\varepsilon} (t) \times \{t\}$,
\begin{subequations}
	\label{eqs:EulerPlate-transformed}
	\begin{alignat}{3}
		\bar{\vr} (\pt \bv + \bv \cdot \nabla \bv) + \nabla \Pi & = \mathbf{F}^\varepsilon, && \tin Q_{T'}^{w^\varepsilon}, \\
		\Div \bv & = 0, && \tin Q_{T'}^{w^\varepsilon}, \\ 
		\pt^2 \eta + \Delta^2 \eta - \nu_s \Delta \pt \eta & = \Pi \circ \bPhi_{w^\varepsilon}, && \ton \Gamma_{T'}, \\
		\pt \eta - (\bv \cdot \bn^{w^\varepsilon}) \circ \bPhi_{w^\varepsilon}
		& = 0, && \ton \Gamma_{T'}, \\
        \bv \cdot \be_3 & = 0, && \ton \Gamma_{T'}^0, \\
		(\bv, \eta, \pt \eta)(\cdot, 0)
		& = (\bv_0, \eta_0, \eta_1), && \tin \Omega^{w_0^\varepsilon},
	\end{alignat}
\end{subequations}
where
\begin{align*}
    \mathbf{F}^\varepsilon & = \bar{\vr} J^\varepsilon \cA^\varepsilon (\pt \bPsi^\varepsilon \cdot (\nabla \tilde{\bv}) \circ \bPsi^\varepsilon)
    + \bar{\vr} \pt(J^\varepsilon \cA^\varepsilon) (\tilde{\bv} \circ \bPsi^\varepsilon) \\
        & 
        \quad + \bar{\vr} \bv \cdot
         \nabla (J^\varepsilon \cA^\varepsilon) (\tilde{\bv} \circ \bPsi^\varepsilon)
        + \bar{\vr} (J^\varepsilon - 1) J^\varepsilon \cA^\varepsilon (\tilde{\bv} \cdot \nabla \tilde{\bv}) \circ \bPsi^\varepsilon \\
        & 
	\quad + (\inv{(\cA^\varepsilon)} - \bbi) (\nabla \tilde{\Pi}) \circ \bPsi^\varepsilon).
\end{align*}
At this stage, we would like to exploit the uniform estimate of $\bF^\varepsilon$ in terms of $w^\varepsilon - \eta$. In view of Lemma \ref{lem:bPsi}, most of the terms can be handled, except for terms with $\pt(J^\varepsilon \cA^\varepsilon)$ and $\nabla(J^\varepsilon \cA^\varepsilon)$. In general, one cannot expect good control of it if we ignore the viscous damping $\nu_s \Delta \pt w^\varepsilon$. More precisely, with $\nu_s > 0$, we know $\sqrt{\nu_s} \nabla \pt w^\varepsilon \in L_t^2L_\Gamma^2$. Then one concludes from Remark \ref{rmk:regularity-loss} with a bit of calculations
\begin{align*}
    \int_0^t \norm{\pt \cA^\varepsilon(\tau)}_{L^2(\Omega^{w^\varepsilon})}^2 \dtau
    & \leq C \int_0^t \nu_s \normm{\nabla \pt w^\varepsilon(\tau) - \nabla \pt \eta(\tau)}_{L_\Gamma^2}^2 \dtau \\
    & \quad + C \int_0^t \big(
    \normm{\pt w^\varepsilon(\tau) - \pt \eta(\tau)}_{L_\Gamma^2}^2 + \normm{\Delta w^\varepsilon(\tau) - \Delta \eta(\tau)}_{L_\Gamma^2}^2\big) \dtau,
\end{align*}
which holds for $\pt J^\varepsilon$ as well. Here the argument strongly relies on the additional regularity \eqref{eqs:w-assumption-thm-3}, i.e., $\nabla w \in L_t^\infty L_\Gamma^\infty$ (and hence $\cA^\varepsilon, J^\varepsilon \in L_t^\infty L_{w^\varepsilon}^\infty$). 
Therefore, with $\nabla w^\varepsilon \in L^\infty(0,T;W^{1,2}(\Gamma))$,
\begin{equation}
    \label{eqs:F^epsilon-L^2}
    \begin{aligned}
        \int_0^t \norm{\mathbf{F}^\varepsilon(\tau)}_{L^2(\Omega^{w^\varepsilon})}^2 \dtau 
        & \leq C \int_0^t \Big(\normm{\nabla \pt w^\varepsilon(\tau) - \nabla \pt \eta(\tau)}_{L_\Gamma^2}^2 \\
        & \qquad \qquad
        + \normm{\pt w^\varepsilon(\tau) - \pt \eta(\tau)}_{L_\Gamma^2}^2
        + \normm{\Delta w^\varepsilon(\tau) - \Delta \eta(\tau)}_{L_\Gamma^2}^2\Big) \dtau,
    \end{aligned}
\end{equation}
where $C > 0$ depends on \eqref{eqs:w-assumption-thm-3}, but is independent of $\varepsilon > 0$.

Moreover, the assumption \eqref{eqs:w-assumption-thm-3} helps us to exploit better uniform bounds for $\bF^\varepsilon$, which is essentially necessary to handle the residual term in \eqref{eqs:rhovF^varepsilon}. For $ \fp > \max\left\{4,\frac{2\gamma}{\gamma - 1}\right\} $, we have
\begin{equation}
    \label{eqs:F^epsilon-L^p}
    \begin{aligned}
        \int_0^t \norm{\mathbf{F}^\varepsilon(\tau)}_{L_{w^\varepsilon}^\fp}^2 \dtau
        & \leq C \int_0^t \big(
            \normm{\pt \bPsi^\varepsilon(\tau)}_{L_{w^\varepsilon}^\fp}^2
            + \normm{\nabla J^\varepsilon(\tau)}_{L_{w^\varepsilon}^\fp}^2
            + \normm{\nabla \cA^\varepsilon(\tau)}_{L_{w^\varepsilon}^\fp}^2
            + 1
        \big) \dtau \\
        & \leq C \int_0^t \big(
            \normm{\pt \nabla w^\varepsilon(\tau)}_{L_{w^\varepsilon}^\fp}^2
            + \normm{\Delta w^\varepsilon(\tau)}_{L_{w^\varepsilon}^\fp}^2
            + 1
        \big) \dtau
        \leq C,
    \end{aligned}
\end{equation}
where $C > 0$ depends on \eqref{eqs:w-assumption-thm-3}, but is independent of $\varepsilon > 0$.

At the moment we are ready to justify the incompressible inviscid limit.
\subsubsection{Proof of Theorem \ref{thm:singular-limit-full}}
\label{sec:proof-theorem-limit}
Let $ r = \bar{\vr} $, $ \bU = \bv $, $ W = \eta $ in \eqref{eqs:relative-energy-inequality-epsilon}. It follows from \eqref{eqs:EulerPlate-transformed} that
\begin{align*}
	& \cE \big( (\vr^\varepsilon, \bu^\varepsilon, w^\varepsilon)|(\bar{\vr}, \bv, \eta) \big)(\tau) \\
	& \quad + \frac{\nu}{2} \int_{0}^{\tau} \int_{\Omega^{w^\varepsilon}(t)} \big( \bbs(\nabla \bu^\varepsilon) - \bbs(\nabla \bv) \big) : (\nabla \bu^\varepsilon - \nabla \bv) \dxdt \\
	& \quad + \frac{\alpha \nu}{2} \int_{0}^{\tau} \int_{\Gamma^{w^\varepsilon}(t)} \abs{\big(\bu^\varepsilon - \bv - (\pt w^\varepsilon - \pt \eta) \bn \circ \bPhi_{w^\varepsilon}^{-1}\big)_{\btau^{w^\varepsilon}}}^2 \dH^2 \dt \\
	& \quad + \frac{\alpha_0 \nu}{2} \int_{0}^{\tau} \int_{\Gamma_0} \abs{(\bu^\varepsilon - \bv)_{\btau^0}}^2 \dH^2 \dt \\
	& \quad + \nu_s \int_{0}^{\tau} \int_{\Gamma} \abs{\nabla \pt w^\varepsilon - \nabla \pt \eta}^2 \dH^2 \dt \\
	& \leq \cE \big( (\vr^\varepsilon, \bu^\varepsilon, w^\varepsilon)|(\bar{\vr}, \bv, \eta) \big)(0) 
	+ \frac{\nu}{2} \int_0^\tau \int_{\Omega^{w^\varepsilon}(t)} \abs{\nabla \bv}^2 \dxdt\\
	& \quad
        + \frac{\alpha \nu}{2} \int_{0}^{\tau} \int_{\Gamma^{w^\varepsilon}(t)} \abs{(\bv - \pt \eta \bn \circ \bPhi_{w^\varepsilon}^{-1})_{\btau^{w^\varepsilon}}}^2 \dH^2 \dt \\
	& \quad + \frac{\alpha_0 \nu}{2} \int_{0}^{\tau} \int_{\Gamma_0} \abs{\bv_{\btau^0}}^2 \dH^2 \dt 
        - \int_{0}^{\tau} \int_\Gamma (\pt w^\varepsilon - \pt \eta) (\Pi \circ \bPhi_{w^\varepsilon}) \dH^2 \dt \\
	& \quad - \int_0^\tau \int_{\Omega^{w^\varepsilon}(t)}
	\vr^\varepsilon(\bu^\varepsilon - \bv) \cdot (\bu^\varepsilon - \bv) \cdot \nabla \bv \dxdt
	+ \int_0^\tau \int_{\Omega^{w^\varepsilon}(t)} \frac{\vr^\varepsilon}{\bar{\vr}} (\bu^\varepsilon - \bv) \cdot \nabla \Pi \dxdt \\
	& \quad + \int_0^\tau \int_{\Omega^{w^\varepsilon}(t)} \vr^\varepsilon (\bu^\varepsilon - \bv) \cdot \frac{1}{\bar{\vr}} \bF^\varepsilon \dxdt,
\end{align*}
where we apply Young's inequality for the term with respect to $ \nu $, $\alpha$, $\alpha_0$.
Now we proceed with the estimates term by term. First, direct calculation yields
\begin{equation*}
	\int_0^\tau \int_{\Omega^{w^\varepsilon}(t)}
	\vr^\varepsilon(\bu^\varepsilon - \bv) \cdot (\bu^\varepsilon - \bv) \cdot \nabla \bv \dxdt 
	\leq \int_0^\tau \normm{\nabla \bv(t)}_{L_w^\infty} \cE \big( (\vr^\varepsilon, \bu^\varepsilon, w^\varepsilon)|(\bar{\vr}, \bv, \eta) \big)(t) \dt.
\end{equation*}
Moreover, 
\begin{equation*}
	\int_{0}^{\tau} \int_\Gamma (\pt w^\varepsilon - \pt \eta) (\Pi \circ \bPhi_{w^\varepsilon}) \dH^2 \dt
	= \int_{0}^{\tau} \int_{\Gamma^{w^\varepsilon}(t)} (\bu^\varepsilon - \bv) \cdot \frac{\bn^{w^\varepsilon}}{\sqrt{1 + \abs{\nabla w}^2}} \Pi \dH^2 \dt.
\end{equation*}
On the other hand, by means of weak formulation of the continuity equation \eqref{eqs:weak-renormal}, we have
\begin{align*}
	\int_0^\tau & \int_{\Omega^{w^\varepsilon}(t)} \vr^\varepsilon(\bu^\varepsilon - \bv) \cdot \nabla \Pi \dxdt \\
	& = - \varepsilon \int_0^\tau \int_{\Omega^{w^\varepsilon}(t)} \frac{\vr^\varepsilon - \bar{\vr}}{\varepsilon} \pt \Pi \dxdt
	+ \varepsilon \left[ \int_{\Omega^{w^\varepsilon}(t)} \frac{\vr^\varepsilon - \bar{\vr}}{\varepsilon} \Pi \dx \right]_{t = 0}^{t = \tau} \\
	& \quad
	+ \int_0^\tau \int_{\Gamma^{w^\varepsilon}(t)} \bar{\vr} (\bu^\varepsilon - \bv) \cdot \frac{\bn^{w^\varepsilon}}{\sqrt{1 + \abs{\nabla w}^2}} \Pi \dH^2 \dt
	- \varepsilon \int_0^\tau \int_{\Omega^{w^\varepsilon}(t)} \frac{\vr^\varepsilon - \bar{\vr}}{\varepsilon} \bv \cdot \nabla \Pi \dxdt,
\end{align*}
where we employ an observation by the Reynolds' transport theorem as
\begin{align*}
	\left[ \int_{\Omega^{w^\varepsilon}(t)} \bar{\vr} \Pi \dx \right]_{t = 0}^{t = \tau}
	& = \int_0^\tau \ddt \int_{\Omega^{w^\varepsilon}(t)} \bar{\vr} \Pi \dxdt \\
	& = \int_0^\tau \int_{\Omega^{w^\varepsilon}(t)} \bar{\vr} \pt \Pi \dxdt
	+ \int_0^\tau \int_{\Omega^{w^\varepsilon}(t)} \Div (\bar{\vr} \Pi \bu^\varepsilon) \dxdt \\
	& = \int_0^\tau \int_{\Omega^{w^\varepsilon}(t)} \bar{\vr} \pt \Pi \dxdt 
	+ \int_0^\tau \int_{\Gamma^{w^\varepsilon}(t)} \bar{\vr} \bu^\varepsilon \cdot \frac{\bn^{w^\varepsilon}}{\sqrt{1 + \abs{\nabla w}^2}} \Pi \dH^2 \dt.
\end{align*}
In a similar manner as in \cite{FN2017}, we have
\begin{align*}
	- \varepsilon \int_0^\tau \int_{\Omega^{w^\varepsilon}(t)} \frac{\vr^\varepsilon - \bar{\vr}}{\varepsilon} \pt \Pi \dxdt
	& + \varepsilon \left[ \int_{\Omega^{w^\varepsilon}(t)} \frac{\vr^\varepsilon - \bar{\vr}}{\varepsilon} \Pi \dx \right]_{t = 0}^{t = \tau} \\
	& - \varepsilon \int_0^\tau \int_{\Omega^{w^\varepsilon}(t)} \frac{\vr^\varepsilon - \bar{\vr}}{\varepsilon} \bv \cdot \nabla \Pi \dxdt
	\leq \varepsilon C.
\end{align*}
Therefore,
\begin{align*}
    \int_0^\tau \int_{\Omega^{w^\varepsilon}(t)} \vr^\varepsilon(\bu^\varepsilon - \bv) \cdot \nabla \Pi \dxdt 
    & - \int_{0}^{\tau} \int_\Gamma (\pt w^\varepsilon - \pt \eta) (\Pi \circ \bPhi_{w^\varepsilon}) \dH^2 \dt \\
    & \quad \leq \int_{0}^{\tau} C \cE \big( (\vr^\varepsilon, \bu^\varepsilon, w^\varepsilon)|(\bar{\vr}, \bv, \eta) \big)(t) \dt 
	+ \varepsilon C.
\end{align*}

Furthermore, it follows from \eqref{eqs:uniform-bounds-rho-barrho-epsilon} and \eqref{eqs:uniform-bounds-1+rho-epsilon}  that
\begin{align*}
	& \norm{\ess{\vr^\varepsilon} (\bu^\varepsilon - \bv)}_{L^2(\Omega^{w^\varepsilon})} 
	\leq \ess{\sqrt{\bar{\vr}}} \norm{\sqrt{\vr^\varepsilon} (\bu^\varepsilon - \bv)}_{L^2(\Omega^{w^\varepsilon})} \leq C \cE^\onehalf, \\
	& \norm{\res{\vr^\varepsilon} (\bu^\varepsilon - \bv)}_{L_{w^\varepsilon}^{\frac{2 \gamma}{1 + \gamma}}} 
	\leq \norm{\res{\sqrt{\vr^\varepsilon}}}_{L_{w^\varepsilon}^{2 \gamma}} \norm{\sqrt{\vr^\varepsilon} (\bu^\varepsilon - \bv)}_{L^2(\Omega^{w^\varepsilon})} \leq \varepsilon^{\frac{1}{\gamma}} C \cE^\onehalf.
\end{align*}
If $\gamma > \frac{3}{2}$, then $ \frac{2\gamma}{\gamma+1} > \frac{6}{5} $ and $\frac{2\gamma}{\gamma-1}<6$. 
By virtue of \eqref{eqs:F^epsilon-L^2} and \eqref{eqs:F^epsilon-L^p} we have 

\begin{align}
	\int_0^\tau & \int_{\Omega^{w^\varepsilon}(t)} \vr^\varepsilon(\bu^\varepsilon - \bv) \cdot \bF^\varepsilon \dxdt \nonumber\\
    & = \int_0^\tau \int_{\Omega^{w^\varepsilon}(t)} \ess{\vr^\varepsilon}(\bu^\varepsilon - \bv) \cdot \bF^\varepsilon \dxdt 
    + \int_0^\tau \int_{\Omega^{w^\varepsilon}(t)} \res{\vr^\varepsilon}(\bu^\varepsilon - \bv) \cdot \bF^\varepsilon \dxdt
	\nonumber\\
	& \leq \int_0^\tau C \cE^\onehalf \norm{\bF^\varepsilon(t)}_{L^2(\Omega^{w^\varepsilon})} \dt
    + \varepsilon^\frac{1}{\gamma} C \int_0^\tau \cE^\onehalf  \norm{\bF^\varepsilon(t)}_{L_{w^\varepsilon}^{\frac{2\gamma}{\gamma - 1}}} \dt \nonumber\\
	& \leq 
        \frac{\nu_s}{2} \int_0^\tau \normm{\nabla \pt w^\varepsilon(t) - \nabla \pt \eta(t)}_{L_\Gamma^2}^2 \dt
        + C \int_0^\tau \cE \big( (\vr^\varepsilon, \bu^\varepsilon, w^\varepsilon)|(\bar{\vr}, \bv, \eta) \big)(t) \dt 
        + C \varepsilon^{\frac{1}{\gamma}},
        \label{eqs:rhovF^varepsilon}
\end{align}
where $C > 0$ depends on $\nu_s > 0$ but uniformly in $\varepsilon > 0$.

In conclusion,
\begin{align*}
	\cE & \big( (\vr^\varepsilon, \bu^\varepsilon, w^\varepsilon)|(\bar{\vr}, \bv, \eta) \big)(\tau) \\
	& \leq \cE \big( (\vr^\varepsilon, \bu^\varepsilon, w^\varepsilon)|(\bar{\vr}, \bv, \eta) \big)(0) 
	+ (\nu + \varepsilon) C 
	+ \int_{0}^{\tau} C \cE \big( (\vr^\varepsilon, \bu^\varepsilon, w^\varepsilon)|(\bar{\vr}, \bv, \eta) \big)(t) \dt,
\end{align*}
which, in view of Gronwall's inequality and \eqref{eqs:intial-well}, implies
\begin{align}
	\nonumber
	\cE & \big( (\vr^\varepsilon, \bu^\varepsilon, w^\varepsilon)|(\bar{\vr}, \bv, \eta) \big)(\tau) \\
	\nonumber
	& \leq C(\cE \big( (\vr^\varepsilon, \bu^\varepsilon, w^\varepsilon)|(\bar{\vr}, \bv, \eta) \big)(0) 
	+ C (\nu + \varepsilon)) e^{\tau} \\
	\nonumber
    & \leq C(T',D) \big(\cE \big( (\vr^\varepsilon, \bu^\varepsilon, w^\varepsilon)|(\bar{\vr}, \tilde{\bu}, \eta) \big)(0) 
	+ (\nu + \varepsilon)\big)
    + C(T',D) \norm{\cA^\varepsilon(0) - \bbi}_{L^2}^2 \\
    \label{eqs:limit-piola}
    & \leq C(T',D) \big(\cE \big( (\vr^\varepsilon, \bu^\varepsilon, w^\varepsilon)|(\bar{\vr}, \tilde{\bu}, \eta) \big)(0) 
	+ (\nu + \varepsilon)\big),
\end{align}
as $\normm{\cA^\varepsilon(0) - \bbi}_{L^2}^2 \leq C(D) \normm{\nabla w_0^\varepsilon - \nabla \eta_0}_{L^2}^2$.
Invoking the definition \eqref{eqs:new-transformation} and letting $\tilde{\bu} = \tilde{\bv} \circ \bPsi^\varepsilon$, one further derives
\begin{align*}
	\cE & \big( (\vr^\varepsilon, \bu^\varepsilon, w^\varepsilon)|(\bar{\vr}, \tilde{\bu}, \eta) \big)(\tau) \\
	& \leq C(T',D) \big(\cE \big( (\vr^\varepsilon, \bu^\varepsilon, w^\varepsilon)|(\bar{\vr}, \tilde{\bu}, \eta) \big)(0) 
	+ (\nu + \varepsilon)\big)
    + \int_{\Omega^{w^\varepsilon}(\tau)} \vr^\varepsilon \abs{(J^\varepsilon \cA^\varepsilon - \bbi) \tilde{\bu}}^2(\tau) \dx.
\end{align*}
Note that by \eqref{eqs:uniform-bounds-rho-barrho-epsilon}, \eqref{eqs:uniform-bounds-1+rho-epsilon}, Lemma \ref{lem:bPsi}, \eqref{eqs:tilde-v-psi}, and \eqref{eqs:limit-piola},
\begin{align*}
    \int_{\Omega^{w^\varepsilon}(\tau)} & \vr^\varepsilon \abs{(J^\varepsilon \cA^\varepsilon - \bbi) \tilde{\bu}}^2(\tau) \dx \\
    & \leq \int_{\Omega^{w^\varepsilon}(\tau)} (\vr^\varepsilon - \bar{\vr}) \abs{(J^\varepsilon \cA^\varepsilon - \bbi) \tilde{\bu}}^2(\tau) \dx
    + \int_{\Omega^{w^\varepsilon}(\tau)} \bar{\vr} \abs{(J^\varepsilon \cA^\varepsilon - \bbi) \tilde{\bu}}^2(\tau) \dx \\
    & \leq C \varepsilon \norm{(J^\varepsilon \cA^\varepsilon - \bbi)(\tau)}_{L_{w^\varepsilon}^4 \cap L_{w^\varepsilon}^{\frac{2\gamma}{\gamma-1}}}^2
    + C \norm{(J^\varepsilon \cA^\varepsilon - \bbi)(\tau)}_{L^2(\Omega^{w^\varepsilon})}^2 \\
    & \leq C(\varepsilon + 1) \norm{(\Delta w^\varepsilon - \Delta \eta)(\tau)}_{L_\Gamma^2}^2 \\
    & \leq C(T',D) \big(\cE \big( (\vr^\varepsilon, \bu^\varepsilon, w^\varepsilon)|(\bar{\vr}, \tilde{\bu}, \eta) \big)(0) 
	+ (\nu + \varepsilon)\big).
\end{align*}
Hence, one obtains
\begin{equation*}
	\cE \big( (\vr^\varepsilon, \bu^\varepsilon, w^\varepsilon)|(\bar{\vr}, \tilde{\bu}, \eta) \big)(\tau)
    \leq C(T',D) \big(\cE \big( (\vr^\varepsilon, \bu^\varepsilon, w^\varepsilon)|(\bar{\vr}, \tilde{\bu}, \eta) \big)(0) 
	+ (\nu + \varepsilon)\big),
\end{equation*}
which completes the proof.
\qed

\begin{remark}
    \label{rmk:not-avoid-assumtion}
    In the argument above, we take the \emph{Piola transform}, which preserves the divergence-free property and nullity of normal components. However, it requires a slightly more regularity of $w^\varepsilon$. Indeed, among the error term $\bF^\varepsilon$, we would have {to handle} $\pt \cA^\varepsilon$ and $\nabla \cA^\varepsilon$, which are at most $L^2$-integrable in space with $\nu_s>0$ by standard energy and dissipation (namely, $\pt \nabla w^\varepsilon$ and $\Delta w^\varepsilon$). This would cause much trouble when we derive the limit passage and try to control $\bF^\varepsilon$, cf. \eqref{eqs:rhovF^varepsilon}, and it is essentially where we need the additional regularity assumption \eqref{eqs:w-assumption-thm-3} to close the estimates.
\end{remark}

\subsubsection{Incompressible limit}
If we consider only the low Mach number limit, the statement in Theorem \ref{thm:singular-limit-full} can be accordingly improved, as one may expect some regularity of $\bu^\varepsilon$ with viscosity $\nu > 0$. In this case, the compressible fluid-structure interaction problem will tend to a system coupling the incompressible Navier--Stokes equations and a viscoelastic plate with slip boundary conditions, whose strong well-posedness was established by Djebour--Takahashi \cite{DT2019}. The limit system reads
\begin{subequations}
	\label{eqs:NSPlate}
	\begin{alignat}{3}
		\bar{\vr} (\pt \tilde{\bv} + \tilde{\bv} \cdot \nabla \tilde{\bv}) + \nabla \tilde{\Pi} & = \Div(\nu \bbd(\tilde{\bv})), && \tin Q_{T'}^\eta, \\
		\Div \tilde{\bv} & = 0, && \tin Q_{T'}^\eta, \\ 
		\pt^2 \eta + \Delta^2 \eta - \nu_s \Delta \pt \eta & = \tilde{\Pi} \circ \bPhi_{\eta} - \nu \bbd(\tilde{\bv}) \bn^\eta \circ \bPhi_\eta \cdot \be_3, && \ton \Gamma_{T'}, \\
		(\tilde{\bv} \cdot \bn^\eta) \circ \bPhi_\eta
		& = \pt \eta, && \ton \Gamma_{T'}, \\
		(\nu \bbd(\tilde{\bv}) \bn^\eta)_{\btau^\eta} \circ \bPhi_\eta
		& = - \alpha(\tilde{\bv} - \pt \eta \be_3 \circ \bPhi_\eta^{-1})_{\btau^\eta} \circ \bPhi_\eta, && \ton \Gamma_{T'}, \\
        \tilde{\bv} \cdot \be_3 & = 0, && \ton \Gamma_{T'}^0, \\
		(\nu \bbd(\tilde{\bv}) \be_3 + \alpha_0 \tilde{\bv})_{\btau^0}
		& = 0, && \ton \Gamma_{T'}, \\
		(\tilde{\bv}, \eta, \pt \eta)(\cdot, 0)
		& = (\bv_0 \circ \inv{\bPsi_{0}}, \eta_0, \eta_1), && \tin \Omega^{\eta_0}.
	\end{alignat}
\end{subequations}
where $\bbd(\tilde{\bv}) \coloneqq (\nabla \tilde{\bv} + \nabla \tilde{\bv}^\top)$.
\begin{theorem}
    \label{coro:IncompressibleLimit}
     Let $\Omega \in \bbtwo$, $ \nu_s > 0 $, $ \gamma > 3 $. Assume that the initial data \eqref{eqs:SL-initial-data} is well-prepared satisfying  \eqref{eqs:intial-well} and \eqref{eqs:well-prepared-initial}. Let $ (\vr^\varepsilon, \bu^\varepsilon, w^\varepsilon) $ be a weak solution to the rescaled compressible fluid-structure interaction problem \eqref{eqs:FSI-model-rescale} constructed in Theorem \ref{thm:weak}.
    Let $ (\tilde{\bu}, \tilde{w}) $ be the unique strong solution of \eqref{eqs:NSPlate} defined on $ (0,T') $ subjected to initial data $ (\tilde{\bu}, \tilde{w}, \pt \tilde{w})(0, \cdot) = (\tilde{\bu}_0 \circ \bPhi_{\tilde{w}_0 - w_0^\varepsilon}, \tilde{w}_0, \tilde{w}_1) $.
    Then it holds
    \begin{align}
    	\nonumber
    	& \sup_{0 \leq t \leq T'}
    		\int_{\Omega^{w^\varepsilon}(t)} \bigg(
	    		\vr^\varepsilon \abs{\bu^\varepsilon - \tilde{\bu}}^2 
	    		+ \abs{\frac{\vr^\varepsilon - \bar{\vr}}{\varepsilon}}^\gamma
	    	\bigg)(t) \dx \\
	    \nonumber
    	& \qquad \qquad + \sup_{0 \leq t \leq T'} \int_\Gamma 
    		\bigg(
    			\abs{\pt w^\varepsilon - \pt \tilde{w}}^2
    			+ \abs{\Delta w^\varepsilon - \Delta \tilde{w}}^2
    		\bigg)(t) \dH^2 \\
    	\label{eqs:singular-Limit-convergence-coro}
    	& \qquad \leq C(T',D) \Bigg[\varepsilon
    		+ \int_{\Omega^{w_0^{\varepsilon}}} \bigg(
    			\abs{\bu_0^\varepsilon - \tilde{\bu}_0 \circ \bPhi_{\tilde{w}_0 - w_0^\varepsilon}}^2
    			+ \abs{\vr_0^{\varepsilon,1}}^2
    		\bigg) \dx \\
    	\nonumber
    	& \phantom{\qquad \leq C(T',D) \Bigg[\varepsilon} \, + \int_\Gamma 
    		\bigg(
    			\abs{w_1^\varepsilon - \tilde{w}_1}^2
    			+ \abs{\Delta w_0^\varepsilon - \Delta \tilde{w}_0}^2
    		\bigg) \dH^2 \Bigg],
    \end{align}
	where the constant $ C = C(T',D) $ depends on the norm of the limit solution $ (\tilde{\bu}, \tilde{\eta}) $, $\nu,\nu_s>0$ and on the size of the initial data perturbation. 
\end{theorem}
\begin{proof}
    The proof follows from the same arguments as in Sections \ref{sec:construct-test-function} and \ref{sec:proof-theorem-limit}. In this situation, We point out that we may employ the integrability of $\bu^\varepsilon$ with Sobolev embeddings to decrease the integrability requirement of the source term $\bF^\varepsilon$ in \eqref{eqs:rhovF^varepsilon}. More precisely, by \eqref{eqs:uniform-bounds-rho-u}, \eqref{eqs:uniform-bounds-nabla-u}, and the Korn's inequality in H\"older domain, cf. \cite[Lemma 3.9]{MMNRT2022}, one knows
    \begin{equation}
        \norm{\bu^\varepsilon - \bv}_{W_{{w^\varepsilon}}^{1,p}}^2
        \leq C \Big(
            \norm{\bbs(\nabla \bu^\varepsilon) - \bbs(\nabla \bv)}_{L^2(\Omega^{w^\varepsilon})}^2
            + \norm{\sqrt{\vr^\varepsilon} (\bu^\varepsilon - \bv)}_{L^2(\Omega^{w^\varepsilon})}^2
        \Big),
    \end{equation}
    for any $1< p < 2$, which, together with the Sobolev embedding, cf. \cite[Corollary 2.10]{LR2014}, implies
    \begin{equation}
        \norm{\bu^\varepsilon - \bv}_{L_{w^\varepsilon}^q}^2
        \leq C \Big(
            \norm{\bbs(\nabla \bu^\varepsilon) - \bbs(\nabla \bv)}_{L^2(\Omega^{w^\varepsilon})}^2
            + \norm{\sqrt{\vr^\varepsilon} (\bu^\varepsilon - \bv)}_{L^2(\Omega^{w^\varepsilon})}^2
        \Big),
    \end{equation}
    for any $1 < q < 6$.
    Then it follows from \eqref{eqs:uniform-bounds-rho-epsilon} that
    \begin{align}
        \int_0^\tau & \int_{\Omega^{w^\varepsilon}(t)} \vr^\varepsilon(\bu^\varepsilon - \bv) \cdot \bF^\varepsilon \dxdt \nonumber\\
        & = \int_0^\tau \int_{\Omega^{w^\varepsilon}(t)\setminus\{ \bar{\vr}/2 \leq \vr^\varepsilon \leq 2 \bar{\vr}\}} \vr^\varepsilon (\bu^\varepsilon - \bv) \cdot \bF^\varepsilon \dxdt 
        + \int_0^\tau \int_{\bar{\vr}/2 \leq \vr^\varepsilon \leq 2 \bar{\vr}} \bar{\vr} (\bu^\varepsilon - \bv) \cdot \bF^\varepsilon \dxdt
        \nonumber\\
        & \leq \int_0^\tau C \varepsilon^\frac{2}{\gamma} \norm{(\bu^\varepsilon - \bv)(t)}_{L^q} \norm{\bF^\varepsilon(t)}_{L_{w^\varepsilon}^{\frac{\gamma q}{(q - 1)\gamma - q}}} \dt
        + C \int_0^\tau \norm{(\bu^\varepsilon - \bv)(t)}_{L^q} \norm{\bF^\varepsilon(t)}_{L_{w^\varepsilon}^{\frac{q}{q - 1}}} \dt.
        \nonumber
    \end{align}
    Note that without additional regularity, the domain $\Omega^{w^\varepsilon}(t)$ is only uniform H\"older, and hence with $\nu_s > 0$, we have the control similar to \eqref{eqs:F^epsilon-L^2} (recall Lemma \ref{lem:bPsi} and Corollary \ref{coro:regularity-loss}), 
    \begin{equation}
        \label{eqs:F-varepsilon-Lp}
        \begin{aligned}
            \int_0^\tau \norm{\bF^\varepsilon(t)}_{L_{w^\varepsilon}^{p}}^2 \dt
            & \leq C \int_0^\tau \Big(\normm{\nabla \pt w^\varepsilon(t) - \nabla \pt \eta(t)}_{L_\Gamma^2}^2 \\
            & \qquad \qquad
            + \normm{\pt w^\varepsilon(t) - \pt \eta(t)}_{L_\Gamma^2}^2
            + \normm{\Delta w^\varepsilon(t) - \Delta \eta(t)}_{L_\Gamma^2}^2\Big) \dt,
        \end{aligned}
    \end{equation}
    for any $1 < p < 2$.
    Since $1 < \frac{q}{q-1} < 2$ and $1 < \frac{\gamma q}{(q - 1)\gamma - q} < 2$ for $q > 2$, $\gamma > 3$, with \eqref{eqs:F-varepsilon-Lp}, one derives
    \begin{align}
        \int_0^\tau & \int_{\Omega^{w^\varepsilon}(t)} \vr^\varepsilon(\bu^\varepsilon - \bv) \cdot \bF^\varepsilon \dxdt \nonumber\\
        & \leq 
        \frac{\nu}{4} \int_{0}^{\tau} \int_{\Omega^{w^\varepsilon}(t)} \big( \bbs(\nabla \bu^\varepsilon) - \bbs(\nabla \bv) \big) : (\nabla \bu^\varepsilon - \nabla \bv) \dxdt
        + \frac{\nu_s}{2} \int_0^\tau \normm{\nabla \pt w^\varepsilon(t) - \nabla \pt \eta(t)}_{L_\Gamma^2}^2 \dt 
        \nonumber\\
        & \quad
        + C \int_0^\tau \cE \big( (\vr^\varepsilon, \bu^\varepsilon, w^\varepsilon)|(\bar{\vr}, \bv, \eta) \big)(t) \dt 
        + C \varepsilon^{\frac{2}{\gamma}},
        \label{eqs:rhovF^varepsilon-coro}
    \end{align}
    by which then proceeding with the same argument as in Section \ref{sec:proof-theorem-limit} completes the proof of Theorem \ref{coro:IncompressibleLimit}.
\end{proof}

\section*{Acknowledgments}
    
    The authors are grateful to the anonymous referees for the careful reading of the paper and valuable suggestions, which improved remarkably the presentation of the manuscript.
    
    This work was initiated and partially done during the visits at the Institute of Mathematics of the Czech Academy of Science in Prague and Fakult\"{a}t f\"{u}r Mathematik of Universit\"{a}t Regensburg.
    The hospitalities are gratefully appreciated. 
    
    Y.L. is partially supported by the startup funding from Nanjing Normal University, the Natural Science Foundation of Jiangsu Province (Grant No. BK20240572), the Natural Science Foundation of the Jiangsu Higher Education Institutions of China (Grant No. 24KJB110020), and the China Postdoctoral Science Foundation under Grant Number 2025M773078.
    Y.L. was partially supported by the Graduiertenkolleg 2339 \textit{IntComSin} of the Deutsche Forschungsgemeinschaft (DFG, German Research Foundation) -- Project-ID 321821685, when he was a member at Fakult\"at f\"ur Mathematik of Universit\"at Regensburg and started the project. S.M. is supported by the `Prime Minister Early Career Research Grant, Anusandhan National Research Foundation' (Grant No. ANRF/ECRG/2024/002168/PMS) provided by the Govt. of India.     {\v{S}}.N. has been supported by  (GA\v CR) project 22-01591S and by the  Praemium Academiae of {\v{S}}. Ne{\v{c}}asov{\'{a}}. The Institute of Mathematics, CAS is supported by RVO:67985840. 
    The supports are gratefully acknowledged.

\section*{Compliance with Ethical Standards}
\subsection*{Date avability}
Data sharing not applicable to this article as no datasets were generated during the current study.
\subsection*{Conflict of interest}
The authors declare that there are no conflicts of interest.

\appendix
\section{Proof of Proposition \ref{prop:relative-energy}}
\label{sec:proof-REI}
In this section, we give the proof of Proposition \ref{prop:relative-energy}, the relative energy inequality. 
For a weak solution $ (\vr^\varepsilon, \bu^\varepsilon, w^\varepsilon) $ and a pair of test functions $ (r, \bU, W) $ in $ Q_T^{w^\varepsilon} $, let us recall the relative entropy functional
\begin{equation*}
	\begin{aligned}
		\cE \big( &(\vr^\varepsilon, \bu^\varepsilon, w^\varepsilon)|(r, \bU, W) \big)(t) \\
		& \coloneqq \int_{\Omega^{w^\varepsilon}(t)} 
		\onehalf \vr^\varepsilon \abs{\bu^\varepsilon - \bU}^2 (t) \dx
		+ \frac{1}{\varepsilon^2 (\gamma - 1)} \int_{\Omega^{w^\varepsilon}(t)} \left( p(\vr^\varepsilon) - p'(r) (\vr^\varepsilon - r) - p(r) \right)(t) \dx \\
		& \quad\  + \int_{\Gamma} \left( \onehalf \abs{\pt w^\varepsilon - \pt W}^2 + \onehalf \abs{\Delta w^\varepsilon - \Delta W}^2 \right)(t) \dH^2.
	\end{aligned}
\end{equation*}
In order to prove the proposition, we follow \cite{FJN2012} and adapt it to the case with \textit{an elastic structure} and \textit{Navier-slip boundary condition}. Taking $ (\bphi, \psi) = (\bU, \pt W) $ in \eqref{eqs:weak-momentum} satisfying $ \tr_{\Gamma^w} \bU \cdot (\bn^{w^\varepsilon} \circ \bPhi_{w^\varepsilon}) = (\pt W \bn) \cdot (\bn^{w^\varepsilon} \circ \bPhi_{w^\varepsilon}) $ on $ \Gamma_T $ (where the notion of $\tr_{\Gamma}$ is introduced in Lemma \ref{lem:trace-Phi_w}),we have for a.e. $ \tau \in (0,T) $,
\begin{align}
	\nonumber
	& \int_{\Omega^{w^\varepsilon}(\tau)} (\vr^{\varepsilon} \bu^{\varepsilon}) \cdot \bU(\tau) \dx 
	+ \int_{\Gamma} \pt w^\varepsilon \pt W(\tau) \dH^2 \\
	\nonumber
	& \quad = \int_{\Omega^{w_0}} \mathbf{m}_0 \cdot \bU(0) \dx 
	+ \int_{\Gamma} w_{0,1} \pt W(0) \dH^2 \\
	\nonumber
	& \qquad + \int_0^{\tau} \int_{\Omega^{w^\varepsilon}(t)} \big(
	\vr^\varepsilon \bu^\varepsilon \cdot \pt \bU + (\vr^\varepsilon \bu^\varepsilon \otimes \bu^\varepsilon) : \nabla \bU
	+ \frac{1}{\varepsilon^2} p(\vr^\varepsilon) \Div \bU - \nu \bbs(\nabla \bu^\varepsilon) : \nabla \bU
	\big) \dxdt \\
	\nonumber
	& \qquad + \int_0^{\tau} \int_{\Gamma} \big( \pt w^\varepsilon \pt^2 W - \Delta w^\varepsilon \Delta \pt W - \nu_s \nabla \pt w^\varepsilon \cdot \nabla \pt W \big) \dH^2\dt \\
	\nonumber
	& \qquad - \alpha \nu \int_0^{\tau} \int_{\Gamma^{w^\varepsilon}(t)} \big( \bu^\varepsilon - \pt w^\varepsilon \bn \circ \inv{\bPhi_{w^\varepsilon}} \big)_{\btau^{w^\varepsilon}} \cdot \big( \bU - \pt W \bn \circ \inv{\bPhi_{w^\varepsilon}} \big)_{\btau^{w^\varepsilon}} \dH^2 \dt \\
	\label{eqs:bPhi=bv}
	& \qquad - \alpha_0 \nu \int_0^{\tau} \int_{\Gamma_0(t)}  \bu^\varepsilon_{\btau^0} \cdot \bU_{\btau^0} \dH^2 \dt.
\end{align}
Note that
\begin{align*}
	\int_0^{\tau} & \int_{\Gamma} \pt w^\varepsilon \pt^2 W \dH^2\dt \\
	& = \int_0^{\tau} \int_{\Gamma} \pt W \pt^2 W \dH^2 \dt 
	+ \int_0^{\tau} \int_{\Gamma} (\pt w^\varepsilon - \pt W) \pt^2 W \dH^2 \dt \\
	& = \int_0^{\tau} \int_{\Gamma} \onehalf \frac{\d}{\d t} \abs{\pt W}^2 \dH^2 \dt
	+ \int_0^{\tau} \int_{\Gamma} (\pt w^\varepsilon - \pt W) \pt^2 W \dH^2 \dt \\
	& = \int_{\Gamma} \onehalf \abs{\pt W}^2(\tau) \dH^2 - \int_{\Gamma} \onehalf \abs{\pt W}^2(0) \dH^2
	+ \int_0^{\tau} \int_{\Gamma} (\pt w^\varepsilon - \pt W) \pt^2 W \dH^2 \dt.
\end{align*}
Integration by parts over $\Gamma$ yields
\begin{align*}
	- \int_0^{\tau} & \int_{\Gamma} \Delta w^\varepsilon \Delta \pt W \dH^2 \dt \\
	& = - \int_0^{\tau} \int_{\Gamma} \Big(\frac{\d}{\d t} (\Delta w^\varepsilon \Delta W) - \Delta \pt w^\varepsilon \Delta W\Big) \dH^2 \dt \\
	& = - \int_{\Gamma} \big( (\Delta w^\varepsilon \Delta W)(\tau) - (\Delta w^\varepsilon \Delta W)(0) \big) \dH^2  \\
	& \quad + \int_0^{\tau} \int_{\Gamma} \Delta \pt W \Delta W \dH^2 \dt
	+ \int_0^{\tau} \int_{\Gamma} (\pt w^\varepsilon - \pt W) \Delta^2 W \dH^2 \dt \\
	& = - \int_{\Gamma} \big( (\Delta w^\varepsilon \Delta W)(\tau) - (\Delta w^\varepsilon \Delta W)(0) \big) \dH^2 \\
	& \quad + \int_{\Gamma} \onehalf \abs{\Delta W}^2(\tau) \dH^2 - \int_{\Gamma} \onehalf \abs{\Delta W}^2(0) \dH^2
	+ \int_0^{\tau} \int_{\Gamma} (\pt w^\varepsilon - \pt W) \Delta^2 W \dH^2 \dt,
\end{align*}
and 
\begin{equation*}
	\begin{aligned}
		- \nu_s & \int_0^{\tau} \int_{\Gamma} \nabla \pt w^\varepsilon \cdot \nabla \pt W \dH^2 \dt \\
		& = - \nu_s \int_0^{\tau} \int_{\Gamma} \nabla \pt W \cdot \nabla \pt W \dH^2 \dt
		- \nu_s \int_0^{\tau} \int_{\Gamma} \nabla (\pt w^\varepsilon - \pt W) \cdot \nabla \pt W \dH^2 \dt \\
		& = \nu_s \int_0^{\tau} \int_{\Gamma} \abs{\nabla \pt W}^2 \dH^2 \dt 
		- 2 \nu_s \int_0^{\tau} \int_{\Gamma} \abs{\nabla \pt W}^2 \dH^2 \dt \\
		& \quad + \nu_s \int_0^{\tau} \int_{\Gamma} \nabla (\pt w^\varepsilon - \pt W) \nabla \pt W \dH^2 \dt 
		- 2 \nu_s \int_0^{\tau} \int_{\Gamma} \nabla (\pt w^\varepsilon - \pt W) \nabla \pt W \dH^2 \dt \\
		& = \nu_s \int_0^{\tau} \int_{\Gamma} \abs{\nabla \pt W}^2 \dH^2 \dt 
		- 2 \nu_s \int_0^{\tau} \int_{\Gamma} \nabla \pt w^\varepsilon \cdot \nabla \pt W \dH^2 \dt \\
		& \quad - \nu_s \int_0^{\tau} \int_{\Gamma} (\pt w^\varepsilon - \pt W) \Delta \pt W \dH^2 \dt.
	\end{aligned}
\end{equation*}
Then \eqref{eqs:bPhi=bv} becomes
\begin{align}
	\nonumber
	& \bigg[\int_{\Omega^{w^\varepsilon}(t)} (\vr^\varepsilon \bu^\varepsilon) \cdot \bU(t) \dx 
	+ \int_{\Gamma} \big( \pt w^\varepsilon \pt W + \Delta w^\varepsilon \Delta W \big)(t) \dH^2
	- \int_{\Gamma} \big( \onehalf \abs{\pt W}^2 + \onehalf \abs{\Delta W}^2 \big)(t) \dH^2\bigg]_{t = 0}^{t = \tau} \\
	\nonumber
	& \quad = \int_0^{\tau} \int_{\Omega^{w^\varepsilon}(t)} \big( 
	\vr^\varepsilon \bu^\varepsilon \cdot (\pt + \bu^\varepsilon \cdot \nabla) \bU
	+ \frac{1}{\varepsilon^2} p(\vr^\varepsilon) \Div \bU 
	- \nu \bbs(\nabla \bu^\varepsilon) : \nabla \bU 
	\big) \dxdt \\
	\nonumber
	& \qquad + \nu_s \int_0^{\tau} \int_{\Gamma} \abs{\nabla \pt W}^2 \dH^2 \dt
	- 2 \nu_s \int_0^{\tau} \int_{\Gamma} \nabla \pt w^\varepsilon \cdot \nabla \pt W \dH^2 \dt \\
	\nonumber
	& \qquad + \int_0^{\tau} \int_{\Gamma} (\pt w^\varepsilon - \pt W) (\pt^2 W + \Delta^2 W - \nu_s \Delta \pt W) \dH^2 \dt \\
	\nonumber
	& \qquad - \alpha \nu \int_0^{\tau} \int_{\Gamma^{w^\varepsilon}(t)} \big( \bu^\varepsilon - \pt w^\varepsilon \bn \circ \inv{\bPhi_{w^\varepsilon}} \big)_{\btau^{w^\varepsilon}} \cdot \big( \bU - \pt W \bn \circ \inv{\bPhi_{w^\varepsilon}} \big)_{\btau^{w^\varepsilon}} \dH^2 \dt \\
	\label{eqs:bPhi=bv_2}
	& \qquad - \alpha_0 \nu \int_0^{\tau} \int_{\Gamma_0(t)}  \bu^\varepsilon_{\btau^0} \cdot \bU_{\btau^0} \dH^2 \dt.
\end{align}

Let $ b(\vr^\varepsilon) = \vr^\varepsilon $. Setting $ \varphi = \onehalf \abs{\bU}^2 $ and $ \varphi = \frac{1}{\gamma - 1} p'(r) $ in \eqref{eqs:weak-renormal} respectively, one obtains
\begin{equation}
	\label{eqs:phi=U^2}
	\bigg[
	\onehalf \int_{\Omega^{w^\varepsilon}(t)} \vr^\varepsilon \abs{\bU}^2 (t) \dx
	\bigg]_{t = 0}^{t = \tau}
	= \int_0^{\tau} \int_{\Omega^{w^\varepsilon}(t)} \vr^\varepsilon \bU \cdot ( \pt + \bu \cdot \nabla ) \bU \dxdt,
\end{equation}
and
\begin{equation}
	\label{eqs:phi=H'(rho)}
	\bigg[
	\frac{1}{\gamma - 1} \int_{\Omega^{w^\varepsilon}(t)} \vr^\varepsilon p'(r) (t) \dx
	\bigg]_{t = 0}^{t = \tau}
	= \frac{1}{\gamma - 1} \int_0^{\tau} \int_{\Omega^{w^\varepsilon}(t)} \vr^\varepsilon ( \pt + \bu^\varepsilon \cdot \nabla ) p'(r) \dxdt.
\end{equation}
By the Reynolds transport theorem and Gauss's theorem, one gets
\begin{align*}
	\int_{\Omega^{w^\varepsilon}(\tau)} \bU \cdot \nabla p(r) \dx
	& = \int_{\Omega^{w^\varepsilon}(\tau)} \big( \Div (p(r) \bU) - p(r) \Div \bU \big) \dx \\
	& = \int_{\Omega^{w^\varepsilon}(\tau)} \Div ( p(r) \bu^\varepsilon ) \dx \\
	& \quad - \int_{\Omega^{w^\varepsilon}(\tau)} \Div \big( p(r)(\bu^\varepsilon - \bU) \big) \dx 
	- \int_{\Omega^{w^\varepsilon}(\tau)} p(r) \Div \bU \dx \\
	& = \frac{\d}{\d \tau} \int_{\Omega^{w^\varepsilon}(\tau)} p(r) \dx 
	- \int_{\Omega^{w^\varepsilon}(\tau)} \pt p(r) \dx\\
	& \quad - \int_{\Gamma^{w}(\tau)} p(r) (\bu^\varepsilon - \bU) \cdot \bn^{w^\varepsilon} \dH^2
	- \int_{\Omega^{w^\varepsilon}(\tau)} p(r) \Div \bU \dx,
\end{align*}
which implies that
\begin{equation}
	\label{eqs:r^gamma}
	\begin{aligned}
		\int_{\Omega^{w^\varepsilon}(\tau)} p(r)(\tau) \dx
		& = \int_{\Omega^{w_0}} p(r)(0) \dx
		+ \int_0^{\tau} \int_{\Gamma^{w^\varepsilon}(t)} p(r) (\bu^\varepsilon - \bU) \cdot \bn^{w^\varepsilon} \dH^2 \dt \\
		& \quad + \int_0^{\tau} \int_{\Omega^{w^\varepsilon}(t)} p(r) \Div \bU \dxdt + \frac{1}{\gamma - 1} \int_0^{\tau} \int_{\Omega^{w^\varepsilon}(t)} r (\pt + \bU \cdot \nabla) p'(r) \dxdt,
	\end{aligned}
\end{equation}
where we have used the identity
\begin{equation}
	\label{eqs:identity-rho^gamma}
	\partial p(\vr) = \gamma \vr^{\gamma - 1} \partial \vr = \gamma \vr (\vr^{\gamma - 2} \partial \vr) = \frac{1}{\gamma - 1} \vr \partial p'(\vr).
\end{equation}
Here $ \partial \cdot $ is a general first-order differential operator. Note that 
\begin{equation*}
	\frac{1}{\gamma - 1} \int_{\Omega^{w^\varepsilon}(\tau)} \vr^\varepsilon p'(r) \dx
	- \int_{\Omega^{w^\varepsilon}(\tau)} p(r) \dx
	= \frac{1}{\gamma - 1} \int_{\Omega^{w^\varepsilon}(\tau)} (\vr^\varepsilon - r) p'(r) \dx
	+ \frac{1}{\gamma - 1} \int_{\Omega^{w^\varepsilon}(\tau)} p(r) \dx.
\end{equation*}
Now subtracting \eqref{eqs:bPhi=bv_2}, \eqref{eqs:phi=H'(rho)} from \eqref{eqs:weak-energy-inequality} (with parameters $ \varepsilon $ and $ \nu $ correspondingly) and adding \eqref{eqs:phi=U^2}, \eqref{eqs:r^gamma} multiplied by $ \frac{1}{\varepsilon^2} $ together yields the desired relative energy inequality \eqref{eqs:relative-energy-inequality-epsilon}.
\qed

\begin{remark}
    Here the relative energy inequality is derived in the deformed domain $\Omega^{w^\varepsilon}(t)$ depending on time. In \cite{Trifunovic2023}, a version of relative energy inequality in a fixed configuration was considered, which contains the associated quantities due to the transformation, e.g., the deformation gradient.
\end{remark}

\bibliographystyle{siam}
\bibliography{CFSI}

\end{document}